\theoremstyle{plain}
\newtheorem{thm}{Theorem}[section]
\newtheorem{prop}[thm]{Proposition}
\newtheorem{lemma}[thm]{Lemma}
\newtheorem{cor}[thm]{Corollary}
\newtheorem*{claim*}{Claim}
\newtheorem*{thm*}{Theorem}
\theoremstyle{definition}
\newtheorem{defi}[thm]{Definition}
\newtheorem{setting}[thm]{Setting}
\newtheorem*{notation*}{Notation}
\newtheorem*{ack*}{Acknowledgments}
\theoremstyle{remark}
\newtheorem{obs}[thm]{Remark}
\newtheorem*{obs*}{Remark}
\theoremstyle{plain}
\newcounter{main}
 \newtheorem{maintheorem}[main]{Theorem}
\def\Chat{\widehat{\mathbb{C}}}
\def\DD{\mathbb{D}}
\def\dD{\partial\mathbb{D}}
\renewcommand{\tilde}{\widetilde}
\renewcommand{\setminus}{\smallsetminus}
\newcommand\numberthis{\addtocounter{equation}{1}\tag{\theequation}}
\title{Boundaries of multiply connected Fatou components. A unified approach}
\author[G. R. Ferreira]{Gustavo R. Ferreira}
\address{Gustavo R. Ferreira. Centre de Recerca Matem\`atica, Barcelona, Spain}
\email{grodrigues@crm.cat}
\author[A. Jov\'e]{Anna Jov\'e}
\address{Anna Jov\'e. Departament de Matemàtiques i Informàtica, Universitat de Barcelona, Barcelona, Spain}
\email{annajove@ub.edu}
\thanks{Both authors are partially supported by the Spanish State Research Agency through grant number PID2023-147252NB-I00. The first author acknowledges financial support from the Spanish State Research Agency through the Mar\'ia de Maeztu Program for Centers and Units of Excellence in R\&D (CEX2020-001084-M). The second author acknowledges financial support from the Spanish government grant FPI PRE2021-097372.}
\date{\today}
\begin{document}
\begin{abstract}
We analyze the boundaries of multiply connected Fatou components of transcendental maps by means of universal covering maps and associated inner functions. A unified approach is presented, which includes invariant Fatou components (of any type) as well as wandering domains. We prove that any Fatou component admits a harmonic measure on its boundary whose support is the whole boundary. Consequently, we relate, in a successful way, the geometric structure of such Fatou components (in terms of the limit sets of their universal covering maps), the dynamics induced on their boundary from an ergodic point of view, and analytic properties of the associated inner function.
\end{abstract}
\maketitle

\section{Introduction}
Consider the discrete dynamical system generated by a meromorphic map $ f \colon\mathbb{C}\to\widehat{\mathbb{C}} $, i.e. the sequence of iterates $ \left\lbrace f^n(z)\right\rbrace _n $, where $ z\in{\mathbb{C}} $. If the point at infinity is an essential singularity of $ f $, we say that $ f $ is transcendental; otherwise $ f $ extends to $ \widehat{\mathbb{C}} $ as a rational map. More generally, we shall work with functions in the class $ \mathbb{K} $, the smallest class containing all meromorphic functions which is closed under composition (see Sect. \ref{subsect-basic-iteration}).

The Riemann sphere $ \widehat{\mathbb{C}} $, regarded as the phase space of the dynamical system, is divided into two totally invariant sets: the {\em Fatou set} $ \mathcal{F}(f) $,  the set of points $ z\in\mathbb{C} $ such that $ \left\lbrace f^n\right\rbrace _{n\in\mathbb{N}} $ is well-defined and forms a normal family in some neighbourhood of $ z$; and the {\em Julia set} $ \mathcal{J}(f) $, its complement, where the dynamics is chaotic. 
The Fatou set is open and consists in general of infinitely many components, called \textit{Fatou components}. Due to the invariance of the Fatou and Julia sets, Fatou components are either periodic, preperiodic or wandering. Wandering domains do not exist for rational maps \cite{Sullivan}. Periodic Fatou components, are either simply, doubly, or infinitely connected, while wandering domains may exhibit any connectivity \cite{BKL90, Ferreira_MCWD1}.

A standard approach to study the dynamics of $ f $ on the interior of a simply connected Fatou component $ U $ is to associate  $ f|_U $ with  an inner function of the unit disk $ \mathbb{D} $ by means of Riemann maps. More precisely, let $ U $ and $ V $ be simply connected Fatou components of $ f $, $ f(U)\subset V $, and let $ \varphi_U\colon\mathbb{D}\to U $ and $ \varphi_V\colon\mathbb{D}\to V $ be Riemann maps. Then, $ \varphi_V \circ g= f\circ \varphi_U$ for some inner function $ g\colon\mathbb{D}\to\mathbb{D} $, i.e.
\[\begin{tikzcd}
	\DD \arrow{r}{g} \arrow[swap]{d}{\varphi_U}&  \DD  \arrow{d}{\varphi_V} \\	
	U \arrow{r}{f}& V
\end{tikzcd}
\]

 If $ U $ is an invariant Fatou component, we shall take $ U=V $ and $ \varphi_U=\varphi_V $ (denoted just by $ \varphi $), while in the case of wandering domains we consider a sequence of Riemann maps $ \varphi_n\colon\mathbb{D}\to U_n $, with $ f^n(U)\subset U_n $; this gives us the following commutative diagrams. 
\[\begin{tikzcd}
	\DD \arrow{r}{g} \arrow[swap]{d}{\varphi}&  \DD  \arrow{d}{\varphi} &&	\DD \arrow{r}{g_0} \arrow{d}{\varphi_0}&  \DD \arrow{r}{g_1} \arrow{d}{\varphi_1}&  \DD \arrow{r}{g_2} \arrow{d}{\varphi_2}&  \DD \arrow{r}{g_3} \arrow{d}{\varphi_3}&\dots \\	
	U \arrow{r}{f}& U &&U_0 \arrow{r}{f}& U_1\arrow{r}{f}& U_2\arrow{r}{f}& U_3\arrow{r}{f}& \dots
\end{tikzcd}
\] 

In the invariant case, one can apply the classical Denjoy-Wolff theorem to the inner function $ g $ to deduce an alternative proof of the classification of periodic Fatou components (see e.g. \cite[Thm. 6]{Bergweiler93}), and to describe the internal dynamics in Baker domains in a finer way \cite{BF01, FagellaHenriksen06}. In the case of wandering domains, the non-autonomous sequence of inner functions $ \left\lbrace g_n\right\rbrace _n $ allows also the study of the internal dynamics of such Fatou components \cite{befrs1,Ferreira-VanStrien}. 

For simply connected Fatou components, this approach is also useful in studying the dynamics of $ f|_{\partial U} $, by extending the previous conjugacy to $ \partial \mathbb{D} $ in the sense of radial limits and using the theory of prime ends of Carathéodory. In the case of invariant Fatou components, this goes back to the analysis of $ \lambda e^z $  done by  Devaney and Goldberg \cite{DevaneyGoldberg}. Since then, related techniques have been used in a wide range of invariant Fatou components \cite{BakerWeinreich,BakerDominguez, Bargmann,BFJK-Accesses,RipponStallard, BFJK-Escaping, JF23, Jov24-boundaries}. In the case of simply connected wandering domains, similar ideas can be applied, as shown in \cite{BEFRS2}.

In view of the previous discussion, the following question arises naturally. Can one use the same construction with a multiply connected Fatou component $ U $, replacing the Riemann map by a universal covering map $ \pi\colon\mathbb{D}\to U $, in order to study the internal and boundary dynamics of $f|_U$? 

Concerning internal dynamics, this approach was already implemented in \cite{Bonfert} (for general self-maps of multiply connected domains, not necessarily Fatou components), in \cite{BFJK-Connectivity,BFJK_AbsorbingSets} (for Baker domains), and in \cite{Ferreira_MCWD2,FR24} (for wandering domains). Although successful results have been obtained, the increased complexity when working with a universal covering map instead of a Riemann map is apparent.

In this paper, we go one step forward: we are concerned with the boundary extension of the commutative diagram given by  $ \pi_U\colon\DD\to U $, $ \pi_V\colon\DD\to V $, universal covering maps, and  $ g\colon\mathbb{D}\to\mathbb{D} $  such that $ \pi_V \circ g= f\circ \pi_U$, i.e.
\[\begin{tikzcd}
	\DD \arrow{r}{g} \arrow[swap]{d}{\pi_U}&  \DD  \arrow{d}{\pi_V} \\	
	U \arrow{r}{f}& V
\end{tikzcd}
\] 

For a simply connected domain $U$, the radial extension of a Riemann map $\varphi\colon\mathbb{D}\to U$, i.e.  \[\varphi^*\colon\partial\mathbb{D}\to\partial U, \hspace{1cm}\varphi^*(\xi)\coloneqq \lim\limits_{t\to 1^-} \pi(t\xi),\]
is always defined $ \lambda $-almost everywhere, where $\lambda$ denotes the Lebesgue measure on $\partial\mathbb{D}$  (see e.g. \cite[Thm. 17.4]{Milnor}), which enables the study of the boundary dynamics in a measure-theoretical way. This is not always the case for arbitrary multiply connected domains, and depends essentially on the size (in terms of logarithmic capacity) of their complement \cite[Corol. B]{FerreiraJove}. For Fatou components, we prove that the radial extension $ \pi^*\colon\partial\mathbb{D}\to\partial U $ is always well-defined $ \lambda $-almost everywhere. This is the content of our first result.
\begin{maintheorem}{\bf (Radial extension well-defined)}\label{thmA}
	Let $ f\in\mathbb{K} $, and let $ U $ be a Fatou component of $ f $. Then, given a universal covering $ \pi\colon \mathbb{D}\to U $, the radial extension \[\pi^*\colon\partial\mathbb{D}\to\partial U, \hspace{1cm}\pi^*(\xi)\coloneqq \lim\limits_{t\to 1^-} \pi(t\xi),\] is well-defined $ \lambda $-almost everywhere, where $ \lambda $ denotes the normalized Lebesgue measure on $ \partial\mathbb{D} $.
	
\noindent	Equivalently, $ U $ admits a harmonic measure $ \omega_U $. Moreover,  the support of $\omega_U$ is precisely the boundary of $ U $. 
\end{maintheorem}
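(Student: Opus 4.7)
The plan is to split the theorem into three independent pieces: an abstract equivalence from our earlier work, a global non-polarity statement for $\widehat{\mathbb{C}}\setminus U$, and a local support argument.

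First, I would invoke Corollary~B of \cite{FerreiraJove}, which asserts that for any hyperbolic domain $U\subset\widehat{\mathbb{C}}$ and any universal covering $\pi\colon\mathbb{D}\to U$, the radial extension $\pi^*$ is well-defined $\lambda$-almost everywhere if and only if $\widehat{\mathbb{C}}\setminus U$ has positive logarithmic capacity, in which case $\omega_U:=\pi^*_\ast\lambda$ is a harmonic measure on $\partial U$. This reduces the first assertion of Theorem~\ref{thmA} to showing that $\widehat{\mathbb{C}}\setminus U$ is non-polar for every Fatou component $U$ of $f\in\mathbb{K}$.

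If $U$ is not the unique Fatou component of $f$, then $\widehat{\mathbb{C}}\setminus U$ contains another Fatou component, which is a non-empty open set and hence non-polar. In the remaining case $\widehat{\mathbb{C}}\setminus U=\mathcal{J}(f)$, and non-polarity must be extracted from the dynamics. I would rely on the fact—classical for rational and transcendental meromorphic maps and extending to class $\mathbb{K}$—that $\mathcal{J}(f)$ is non-polar. The underlying idea is that $\mathcal{J}(f)$ is perfect, contains a dense set of repelling periodic points, and the presence of a single repelling cycle combined with Koebe distortion on inverse branches produces a Cantor-like non-polar subset.

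For the support statement, fix $\zeta\in\partial U$ and a neighborhood $V$ of $\zeta$. Writing $\omega_U=\pi^*_\ast\lambda$, showing $\omega_U(V\cap\partial U)>0$ amounts to proving $\pi^*$ takes values in $V$ on a set of positive Lebesgue measure in $\partial\mathbb{D}$. I would argue via accessibility: any neighborhood of $\zeta$ contains accessible boundary points of $U$ (take any curve in $U$ approaching a point of $V\cap\partial U$), and lifting such a curve to $\mathbb{D}$ yields a lift landing at some $\xi\in\partial\mathbb{D}$ with $\pi^*(\xi)\in V$. A Lindel\"of-type argument in the disk, adapted to the cover, then upgrades this single radial limit to a positive-measure set of $\xi'$ with $\pi^*(\xi')\in V$. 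Hence $\operatorname{supp}(\omega_U)$ is dense in $\partial U$, and being closed it equals $\partial U$.

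The main obstacle is the second step, namely non-polarity of $\mathcal{J}(f)$ uniformly across class $\mathbb{K}$. Known references cover the rational and transcendental meromorphic cases, but $\mathbb{K}$ is closed under composition and one has to verify that non-polarity propagates through compositions—most naturally by exhibiting, for every $f\in\mathbb{K}$, a repelling cycle and carrying a non-polar subset of its local inverse basin into $\mathcal{J}(f)$. A secondary subtlety is the Lindel\"of-type passage in the third step, since the standard prime-end machinery for simply connected domains must be replaced by the analogous statements for the universal cover of a multiply connected $U$.
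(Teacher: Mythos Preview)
Your first two steps are essentially the paper's own route: reduction via \cite[Corol.~B]{FerreiraJove} to non-polarity of $\widehat{\mathbb{C}}\setminus U$, and then using an IFS of inverse branches to get a non-polar subset of $\mathcal{J}(f)$. The paper makes this precise by invoking Stallard's theorem that $\mathcal{J}(f)$ has positive Hausdorff dimension, whose proof (via the 5-island property) carries over verbatim to class~$\mathbb{K}$. So your ``main obstacle'' is in fact already dealt with in the literature.

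The genuine gap is in your support argument, which you flag only as a ``secondary subtlety''. There is no Lindel\"of-type principle that upgrades a single accessible boundary point to a set of positive $\lambda$-measure in $(\pi^*)^{-1}(V)$. Lindel\"of's theorem relates radial and angular limits at a \emph{fixed} $\xi\in\partial\mathbb{D}$; it says nothing about nearby $\xi'$. Your argument uses no dynamical input at all in this step, and indeed it fails for general domains: take $U=\mathbb{D}\setminus\{0\}$, where $0$ is accessible yet carries zero harmonic measure. The fact that $\mathcal{J}(f)$ is perfect rules out isolated boundary points, but nothing in your sketch exploits this, and perfectness alone is still not enough---there exist perfect compact sets of zero logarithmic capacity.

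The paper's proof of the support statement is substantively different and does use the Fatou--Julia structure. When $U\subsetneq\mathcal{F}(f)$, one finds a disk $\overline{V}\subset D(x,r)$ lying in a different Fatou component, sets $\widetilde{U}=U\cup(D(x,r)\setminus\overline{V})$, and applies the comparison principle for harmonic measure to get
\[
\omega_U\big(z,\partial U\cap D(x,r)\big)\ \geq\ \omega_{\widetilde{U}}(z,\partial V)\ >\ 0,
\]
the last inequality holding because $\partial V$ is a non-degenerate continuum. When $U=\mathcal{F}(f)$, the paper pulls back, via the Ahlfors islands property, a piece of $\mathcal{J}(f)$ of positive Hausdorff dimension into $D(x,r)$, so $\partial U\cap D(x,r)$ has positive logarithmic capacity and hence positive harmonic measure. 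Either way, the argument is quantitative from the start rather than an attempted upgrade from a single point.
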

\begin{obs*}
	Theorem \ref{thmA} also holds for functions of finite type (not necessarily in class $ \mathbb{K} $; see Remark \ref{remark-functions-finite-type}).
Also, it is not entirely new, in the sense that its conclusion was already known for some specific types of Fatou components (see Remark \ref{remark-thmA}). To our knowledge, this is the  first time when all Fatou components have been considered under a unified approach, especially considering the assertion about the support of the harmonic measure.
\end{obs*}

Once the radial extension of the universal covering map is well-defined $ \lambda $-almost everywhere,  it is not difficult to see that \[g\colon\mathbb{D}\to\mathbb{D}, \hspace{1cm} \pi_V\circ g=f\circ \pi_U, \]is an {\em inner function}, that is \[g^*(\xi)\coloneqq \lim\limits_{t\to1^{-}}g(t\xi)\in\partial \mathbb{D}\]
for $ \lambda $-almost every $ \xi\in\partial\mathbb{D} $ (Prop. \ref{prop-inner-function}). We call $ g $ the {\em associated inner function} of $ f|_U $. 

As in the simply connected case, we aim to study the interplay between the analytic properties of the associated inner function, the topology of $ \partial U $, and the dynamics of $ f|_{\partial U} $, in the spirit of \cite{JF23}. Moreover, in the case of multiply connected Fatou components, the geometry of the domain, described by the group of deck transformations and its limit set, plays a significant role. We address the following points.
\begin{itemize}
	\item Concerning the geometry of the domains $ U $ and $ V $, we want to describe the limit sets $ \Lambda_U $ and $ \Lambda_V $ of the corresponding groups of deck transformations. We are also interested in classifying points  $\xi\in \partial\mathbb{D}$ according to the image under $\pi$ of the radius at $ \xi $: whether it is compactly contained in the domain, accumulates on the boundary, or oscillates (as in \cite{FerreiraJove}; see Sect. \ref{subsect-FerreiraJove1}). 
	\item With respect to the associated inner function $ g\colon\mathbb{D}\to\mathbb{D} $, we want to describe their set of singularities $ E(g) $ (i.e. points $g\in\partial\mathbb{D}$ around which $g$ does not extend holomorphically). We also want to characterize for which points on $ \partial \mathbb{D} $  the radial limit $ g^* $ is well-defined. 
	\item Regarding the boundary map $ f|_{\partial U} $, we want to determine its ergodic properties in the spirit of \cite{DM91}. More precisely, for invariant Fatou components, we are interested in exactness (which implies ergodicity) and recurrence. We are also concerned with existence of dense orbits on $ \partial U $.
	
	\noindent In order to deal with the boundary dynamics of a wandering domain, neither ergodicity nor recurrence are appropriate, since it is a non-autonomous system and $ \left\lbrace f^n{(\partial U)}\right\rbrace _n  $ are potentially pairwise disjoint sets. Hence, we are interested in the exactness of $ f|_{\partial U} $. 

\noindent Recall that $ f|_{\partial U} $ is {\em exact} with respect to $\omega_U$  if for every measurable $ X\subset \partial U $, such that for every $ n \geq 1$ we have $ X = f^{-n}(X_n) $ for some measurable $ X_n\subset f^n(\partial U) $, there holds $ \omega_U(X)=0 $ or $ \omega_U(X)=1 $. We say that $ f|_{\partial U} $ has $ n $ {\em exact components} if $ \partial U=\sqcup_{k=1}^n\Sigma_k $ and $ f|_{\Sigma_k} $ is exact for all $ k\geq1 $.
\end{itemize}

First, let us focus on invariant Fatou components of rational maps, which are either basins (of attraction or parabolic) or rotation domains. Basins are either simply or infinitely connected, while rotation domains are either simply connected ({\em Siegel disks}) or doubly connected ({\em Herman rings}). Moreover, the type of inner function is uniquely determined by the type of Fatou component and its connectivity (Prop. \ref{prop-inner-rational}). Concerning multiply connected basins, we prove the following.
\begin{maintheorem}{\bf (Basins of rational maps)}\label{thmB}
	Let $ f $ be a rational map, and let $ U $ be a multiply connected basin. Then,
	\begin{enumerate}[label={\em (\alph*)}]
		\item\label{thmB-itema} the limit set  $ \Lambda$ is  $\partial\mathbb{D} $;
		\item $ g $ has infinite degree, and $ E(g) =\partial\mathbb{D}$;
		\item \label{thmB-itemc} $ (g^*)^n(\xi) $ is well-defined  for all $ n\geq 0 $ if and only if $ \left\lbrace \pi(t\xi)\colon t\in \left[ 0,1\right) \right\rbrace  $ is not compactly contained within $ U $.
	\end{enumerate}
\end{maintheorem}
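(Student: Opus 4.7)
The plan is to exploit the structural observation that a multiply connected basin of a rational map must in fact be infinitely connected, since the only doubly connected invariant Fatou components of rational maps are Herman rings. This is the input that feeds all three parts.

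For (a), with $U$ infinitely connected, the complement $\widehat{\mathbb{C}} \setminus U$ has infinitely many connected components, each of positive logarithmic capacity (they intersect $\mathcal{J}(f)$). By a classical criterion for planar Fuchsian groups---or, alternatively, by combining Theorem~\ref{thmA} (harmonic measure fully supported on $\partial U$) with the regularity of the Julia-set boundary---the covering group $\Gamma$ is forced to be of the first kind, so $\Lambda = \partial\mathbb{D}$. For (b), the infinite degree of $g$ stems from the mismatch between the finite mapping degree of $f|_U : U \to U$ (inherited from the rational map $f$) and the infinite rank of $\pi_1(U)$: counting preimages via the functional equation $\pi \circ g = f \circ \pi$ shows that a generic $w \in \mathbb{D}$ has infinitely many $g$-preimages. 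To establish $E(g) = \partial\mathbb{D}$, I would argue by contradiction: if $g$ extended analytically across some $\xi \in \partial\mathbb{D}$, then using (a) and $\Gamma$-equivariance, together with Schwarz reflection, the analyticity would propagate along a dense (eventually full-measure) orbit, and near such a point $g$ would be locally finite-to-one, clashing with the infinite global degree just established.

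For (c), the equivalence is a direct application of the Ferreira-Jove radial-behaviour dichotomy (Sect.~\ref{subsect-FerreiraJove1}) combined with the functional equation. In the forward direction: if the radius $\{\pi(t\xi) : t \in [0,1)\}$ is compactly contained in some $K \subset U$, then $\pi(g(t\xi)) = f(\pi(t\xi)) \in f(K)$, which is a compact subset of $U$; by continuity $g(t\xi)$ must stay in a single component of $\pi^{-1}(f(K))$, itself compact in $\mathbb{D}$. Hence $g^*(\xi) \notin \partial\mathbb{D}$, and the iteration fails already at step~$1$. Conversely, if the radius is not compactly contained, the dichotomy yields radial accumulation on $\partial U$, which together with the functional equation and the fact that $f$ preserves the non-compact-containment property along orbits in $U$ guarantees that $(g^*)^n(\xi)$ remains well-defined for all $n \geq 0$.

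I expect the hardest step to be the assertion $E(g) = \partial\mathbb{D}$ in part (b), since ruling out analytic extension at every boundary point requires a careful interplay between the first-kind property from (a) and the boundary theory of inner functions. Part (a) itself should reduce to invoking the correct classical criterion for first-kind Fuchsian groups, adapted to the rational-basin setting; the delicate point is identifying which formulation of the criterion matches most cleanly to Theorem~\ref{thmA}.
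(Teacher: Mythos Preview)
Your plan has two genuine gaps, one in (a) and one in (c).

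\textbf{Part (a).} Infinite connectivity together with positive capacity of every complementary component does \emph{not} force the covering group to be of the first kind. A champagne subregion of $\mathbb{D}$, or simply $\mathbb{D}$ minus a sequence of closed disks accumulating at a single boundary point, is infinitely connected with every complementary component of positive capacity, yet admits a true crosscut and hence has Cantor limit set by Proposition~\ref{prop-geometri}. (Indeed, Theorem~\ref{thmD}\ref{thmD-a} in this very paper realises all three limit-set types for infinitely connected wandering domains.) There is no ``classical criterion'' of the shape you invoke. The paper's argument is dynamical, not geometric: one shows $(g^*)^{-1}(\Lambda)=\Lambda$ (Corollary~\ref{cor-invariance-limit-sets-proper}, using that $f|_U$ is proper), and since $g^*$ is ergodic (Proposition~\ref{prop-inner-rational} and Theorem~\ref{thm-ergodic-properties}), $\lambda(\Lambda)\in\{0,1\}$. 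The case $\lambda(\Lambda)=0$ is then excluded by the blow-up property of $g^*$ on intervals (Lemma~\ref{lemma-elliptic-inner}). Your static argument misses this invariance/ergodicity input entirely.

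\textbf{Part (c), forward direction.} Your claim that for $\xi$ of bounded type ``iteration fails already at step~1'' is incorrect. You argue that $g(R_\xi)\subset\pi^{-1}(f(K))$ and then assert that the connected component containing $g(R_\xi)$ is compact in $\mathbb{D}$. But if $f(K)$ contains a non-contractible loop, a single component of $\pi^{-1}(f(K))$ contains an entire lift of that loop, hence an unbounded set accumulating on $\partial\mathbb{D}$. So $g^*(\xi)$ may well exist and lie in $\partial\mathbb{D}$; Lemma~\ref{lemma-mapping-properties}\ref{lemma-mapping-properties-a} confirms only that it is again of bounded type, and Theorem~\ref{thmE}\ref{thmEb} shows that in the entire-function wandering setting $g^*(\xi)$ exists for \emph{every} $\xi$, bounded type included. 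The paper's mechanism is different: a bounded-type $\xi$ lies in the non-tangential limit set of a finitely generated subgroup of $\Gamma$ corresponding to loops $\sigma_1,\dots,\sigma_m$; under iteration these loops eventually enter the simply connected absorbing domain of the basin and become contractible, so the associated subgroup becomes trivial and its limit set empty. This forces $(g^*)^n(\xi)$ to fail for some (possibly large) $n$, not for $n=1$.

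Once (a) is established correctly, part (b) is immediate: $\Lambda\subset E(g)$ by Lemma~\ref{lemma-sing-limit-sets}\ref{lemma-singularities-c}, so $E(g)=\partial\mathbb{D}$. Your proposed Schwarz-reflection propagation argument is unnecessary.
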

Moreover, the ergodic properties of the map $ f|_{\partial U} $ were already established by the seminal work of Doering and Mañé \cite[Sect. 6]{DM91}.  In this case, the ergodic properties of the radial extension of the associated inner function (studied in the same paper) transfer directly to $ f|_{\partial U} $, implying that $ f|_{\partial U} $ is exact (in particular, ergodic), and recurrent. In particular, for $ \omega_U $-almost every $ x\in\partial U $, $ \left\lbrace f^n(x)\right\rbrace _n $ is dense on $ \partial U $.

When turning to transcendental meromorphic functions, the structure of the Fatou set is more complicated, due to the presence of an essential singularity. Indeed, new types of Fatou components appear, namely {\em Baker domains} (i.e. invariant Fatou components in which iterates converge to an essential singularity) and wandering domains. Moreover, doubly connected Fatou components are not just Herman rings, but also wandering domains or even basins of attraction. This leads to a less transparent relationship between the group of deck transformations, the associated inner function, and the boundary map, as the following examples illustrate, which are detailed in Section \ref{section-pathological-boundaries} and for which we can compute explicitly the associated inner functions.

If we let first  $ U $ be a Herman ring, then the associated inner function $ g $ is a hyperbolic Möbius transformation (and $ g|_{\partial\mathbb{D}} $ is non-ergodic and non-recurrent). However, $ f $ is ergodic and recurrent when restricted to any of the two connected components of $\partial U$. Hence, it follows that the ergodic properties of the associated inner function do not always correspond to the ones of $ f|_{\partial U} $. 

 As a second example, let $ U $ be a doubly connected wandering domain of an entire function. Then, the associated inner function can be taken to be the identity, but $f$ is exact when restricted to any of the two connected components of $\partial U$. Together with the example of the Herman ring, we observe that the same type of associated inner function may lead to different ergodic properties of the boundary map $ f|_{\partial U} $.
 
 As a final example, consider the map 
 $ f(z)=e^{\alpha(z-1/z)} $, with $ \alpha\in (0, 1/2) $, which is a self-map of $\mathbb{C}^*$, studied by  Baker \cite[Thm. 2]{Baker-puncturedplane}. The Fatou set $ \mathcal{F}(f) $ consists of the immediate attracting basin $U$ of 1, which is doubly connected, and $f|_U$ has infinite degree.  We provide in Section \ref{subsect-baker} a detailed study of such basin of attraction, which shows that Theorem \ref{thmB} can fail completely in the transcendental case.

Despite the difficulties outlined above, we can still prove the following. As usual when dealing with Fatou components of transcendental functions, some control on the singular values is needed (see Sect. \ref{subsection-SV}); we denote by $SV(f|_U)$ the set of singular values of $f|_U$.

\begin{maintheorem}{\bf (Basins in class $ \mathbb{K} $)}\label{thmC}
		Let $ f \in\mathbb{K}$, and let $ U $ be a multiply connected basin.  Then,
				\begin{enumerate}[label={\normalfont(\alph*)}]
			\item the associated inner function $ g $ has infinite degree, and $ \Lambda\subset E(g) $. Moreover, if $ U $ is infinitely connected, then $ E(g) $ is uncountable.
			\item $ f|_{\partial U} $ is exact (and, in particular, ergodic) and recurrent. Moreover, for $ \omega_U $-almost every $ x\in\partial U $, $ \left\lbrace f^n(x)\right\rbrace _n $ is dense on $ \partial U $.
		\end{enumerate}
		Moreover, if $ SV(f|_U) $ is compactly contained in $U$, then
		\begin{enumerate}[label={\normalfont(\alph*)}]
			\setcounter{enumi}{2}
			\item either $ \lambda (E(g))=  \lambda (\Lambda)=0 $, or $ E(g)=\Lambda=\partial\mathbb{D} $.
		\end{enumerate}
\end{maintheorem}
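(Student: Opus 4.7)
Plan for Theorem~\ref{thmC}. I treat the three parts in sequence, using part (a) in the subsequent parts.

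For (a), the infinite degree of $g$ reflects that of $f|_U$: a Riemann--Hurwitz argument applied to a hypothetical proper self-map of finite degree $d \geq 2$ on the multiply connected domain $U$ yields $(d-1)\chi(U) = N \geq 0$, forcing $\chi(U) = 0$ and no ramification; but an unramified covering of an annulus onto itself is incompatible with the contracting or parabolic dynamics of a basin. For the inclusion $\Lambda \subset E(g)$, fix $\xi \in \Lambda$ and a sequence of deck transformations $T_n$ of $\pi$ with $T_n(z_0) \to \xi$. The functional relation $\pi \circ g = f \circ \pi$ yields a commutation $g \circ T_n = T'_n \circ g$, where $T'_n$ is another deck transformation, so $g(T_n(z_0)) = T'_n(g(z_0))$ accumulates on the discrete deck orbit of $g(z_0)$; this clustering rules out holomorphic continuation of $g$ across $\xi$. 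If $U$ is infinitely connected, the deck group is infinitely generated Fuchsian, so its limit set is a perfect closed subset of $\partial\mathbb{D}$ and hence uncountable.

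For (b), the difficulty is that the Herman-ring and wandering-domain examples show that ergodic properties of $g$ do not transfer to $f|_{\partial U}$ in general, so the basin hypothesis must be used essentially. The plan is first to build an absorbing neighbourhood $N \subset U$ of the attracting or parabolic point $p$ (a small invariant disk for attracting cycles, a petal for parabolic ones) such that every orbit in $U$ eventually enters $N$. Lifting, $\pi^{-1}(N)$ is a countable union of simply connected regions whose closures accumulate densely on $\partial\mathbb{D}$, and every orbit of $g$ in $\mathbb{D}$ is eventually absorbed into this family. A Lebesgue density argument then gives exactness of $g^*$ on $\partial\mathbb{D}$: taking a $g^*$-invariant set $X$ of positive measure and a density point $\xi_0$ of $X$, iterating $g$ near $\xi_0$ spreads $X$ over arcs meeting $\pi^{-1}(N)$ with density one, and the dense accumulation of these sets on $\partial\mathbb{D}$ forces $\lambda(X) = 1$. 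Exactness of $f|_{\partial U}$ then follows by transferring through the $\lambda$-a.e.\ semiconjugacy $f \circ \pi^* = \pi^* \circ g^*$ and the identification $\omega_U = \pi^*_{\ast}\lambda$; recurrence follows from ergodicity and $f$-invariance of $\omega_U$, while density of $\omega_U$-a.e.\ orbit follows from ergodicity combined with $\operatorname{supp}\omega_U = \partial U$ (Theorem~\ref{thmA}).

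For (c), the compact containment $SV(f|_U) \Subset U$ yields the reverse inclusion $E(g) \subset \Lambda$: at $\xi \in \partial\mathbb{D} \setminus \Lambda$, a small neighbourhood in $\overline{\mathbb{D}}$ meets only finitely many deck translates, so $\pi$ extends to a locally conformal map into $\widehat{\mathbb{C}}$ across $\xi$, and since singular values lie compactly inside $U$ we may invert $\pi$ on a neighbourhood of $f(\pi(\xi))$ as well; thus $g = \pi^{-1} \circ f \circ \pi$ extends holomorphically at $\xi$. Combined with (a), this gives $E(g) = \Lambda$. For the dichotomy, $\Lambda$ is closed and forward-invariant under $g^*$, which by (b) is Lebesgue-ergodic; hence $\lambda(\Lambda) \in \{0,1\}$, and closedness in the case of full measure forces $\Lambda = \partial\mathbb{D}$. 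The principal obstacle is part (b): the exactness of $g^*$ in the transcendental setting requires translating the basin's attracting or parabolic dynamics into the spread and absorption properties of $\pi^{-1}(N)$ on $\partial\mathbb{D}$, which is subtler than the Doering--Mañé framework that suffices in the rational case.
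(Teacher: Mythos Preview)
Your plan contains several genuine gaps.

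\textbf{Part (a).} Your argument for the infinite degree of $g$ is misdirected: you argue that $f|_U$ itself must have infinite degree, but this is false. For a rational map with an infinitely connected basin (covered by Theorem~\ref{thmC}), $f|_U$ is proper of finite degree, yet $g$ has infinite degree. The correct reason is that a basin always contains a singular value, so $SV(f|_U)\neq\emptyset$; then any critical point of $f$ in $U$ lifts via $\pi$ to infinitely many critical points of $g$ (one per sheet of the covering), forcing $g$ to have infinite degree. Your argument for $\Lambda\subset E(g)$ via deck-transformation commutation is also incomplete: nothing prevents $g(T_n(z_0))$ from converging to a single point of $\Lambda_V\subset\partial\mathbb{D}$, which is consistent with holomorphic extension. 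The paper instead observes that the critical points of $g$ accumulate on $\Lambda$, which immediately rules out analytic continuation there.

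\textbf{Part (b).} You are working much harder than necessary, and you identify the wrong obstacle. The Doering--Ma\~n\'e framework applies to \emph{any} inner function that is elliptic or doubly parabolic, regardless of whether $f$ is rational or transcendental. The key step you are missing is that the inner function $g$ associated with an attracting basin is always elliptic (because the attracting fixed point in $U$ lifts to a fixed point of $g$ in $\mathbb{D}$; this requires a short argument), and the one associated with a parabolic basin is always doubly parabolic with recurrent $g^*$ (this is \cite[Thm.~I]{DM91}). Exactness and recurrence of $g^*$ then follow from standard inner-function theory, and transfer to $f|_{\partial U}$ via the factor map $\pi^*$ exactly as you describe. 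Separately, your claim that ``recurrence follows from ergodicity and $f$-invariance of $\omega_U$'' is wrong: $\omega_U$ is not $f$-invariant for parabolic basins, since $g$ has no fixed point in $\mathbb{D}$; one must use the $\sigma$-finite invariant measure from \cite{DM91} instead.

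\textbf{Part (c).} Your extension argument is flawed: for $\xi\in\partial\mathbb{D}\setminus\Lambda$, the covering $\pi$ is univalent on a crosscut neighbourhood but need \emph{not} extend conformally (or even continuously) across $\xi$, since $\partial U$ may be highly non-locally-connected. Hence you cannot write $g=\pi^{-1}\circ f\circ\pi$ on a neighbourhood of $\xi$ in $\overline{\mathbb{D}}$. The paper does not claim the setwise equality $E(g)=\Lambda$; it proves only that both sets have the same $\lambda$-measure (zero or full). Finally, your dichotomy argument assumes $\Lambda$ is forward $g^*$-invariant, but ergodicity requires full invariance $(g^*)^{-1}(\Lambda)=\Lambda$ a.e., which is not what Lemma~\ref{lemma-mapping-properties} provides in the presence of essential singularities on $\partial U$; the paper instead constructs a backward-invariant full-measure subset of $\partial\mathbb{D}\setminus\Lambda$ and combines it with an invariant (possibly $\sigma$-finite) measure.
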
 

\begin{obs*} Although Baker domains are not considered in this paper, it is easy to see that Theorem \ref{thmC} holds for a particular type of Baker domains, the so-called {\em doubly parabolic} ones (see the discussion in Sect. \ref{section-basins-trans}, Rmk. \ref{remark-dp-Baker-domains}).
\end{obs*}

Finally, we consider wandering domains of functions in class $ \mathbb{K} $. Recent results by Martí-Pete, Rempe and Waterman \cite{MartipeteRempeWaterman,MartiPeteRempeWatermann-mero}, and Evdoridou, Martí-Pete and Rempe \cite{EvdoridouMartipeteRempe}  show that wandering domains of meromorphic functions exhibit a quite large flexibility, and any non-autonomous sequence of holomorphic maps between pairwise disjoint compact sets can be turned into a sequence of wandering domains of a meromorphic function,  being such function constructed using approximation theory. With this in mind, we prove the following.
\begin{maintheorem}{\bf (Wandering domains in class $ \mathbb{K} $)}\label{thmD}
		Let $ f \in\mathbb{K}$, and let $ \left\lbrace U_n\right\rbrace _n $ be a sequence of wandering domains, such that $ U_0 $ is multiply connected and $ f^n(U_0)\subset U_n $. Let $ \pi_n\colon\mathbb{D}\to U_n $ be universal covering maps, so that  the inner functions $ \left\lbrace g_n\right\rbrace _n $  such that $ \pi_{n+1}\circ g_n=f\circ \pi_n $  satisfy $ g_n(0)=0 $, for all $ n\geq 0 $. Then,
	\begin{enumerate}[label={\em (\alph*)}]
		\item\label{thmD-a} if $U_0$ is infinitely connected, then its limit set $ \Lambda_0 $ is either a Cantor set of zero measure, a Cantor set of positive measure, or $ \partial\mathbb{D} $. 
		
		\noindent For each of these possibilities, there exists a transcendental meromorphic function with a multiply connected wandering domain  with a limit set of the corresponding type.
		\end{enumerate}
	If, furthermore, $U_n$ is bounded for all $ n\geq 0 $, then
		\begin{enumerate}[label={\em (\alph*)}]   \setcounter{enumi}{1}
		\item the limit set $ \Lambda_n$ of $ U_n $ satisfies that $ \lambda (\Lambda_n)= \lambda (\Lambda_0)$, for all $ n\geq 0 $.
		\item If $ \sum_n 1-\left| g'_n(0)\right| =\infty $, then $f|_{\partial U_n}$ is exact and $ \Lambda_n=\partial\mathbb{D} $, for all $ n\geq 0 $.
	\end{enumerate}
\noindent Moreover, there exists a transcendental meromorphic function with a wandering domain whose exact components are singletons.
\end{maintheorem}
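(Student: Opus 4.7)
The plan is to treat each part separately, combining Fuchsian group theory, the boundary correspondence from Theorem~\ref{thmA}, ergodic theory of non-autonomous inner function compositions, and the approximation-theoretic flexibility results of Martí-Pete--Rempe--Waterman and Evdoridou--Martí-Pete--Rempe.

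For part (a), since $U_0$ is infinitely connected, the associated group of deck transformations $\Gamma_0$ is a non-elementary, infinitely generated Fuchsian group. Standard Fuchsian group theory then gives that $\Lambda_0 \subset \partial\mathbb{D}$ is closed and perfect, and satisfies the dichotomy: either $\Lambda_0 = \partial\mathbb{D}$, or $\Lambda_0$ is nowhere dense, in which case being closed and perfect on $\partial\mathbb{D}$ it is a Cantor set with $\lambda(\Lambda_0) < 1$. Nowhere dense Cantor sets of both zero and positive measure clearly arise abstractly (e.g.\ as limit sets of suitably chosen Fuchsian groups of the second kind). To realise each case dynamically, I prescribe a bounded multiply connected planar domain whose deck group has the target limit-set type (for instance, by removing sequences of disks clustering on a Cantor set of prescribed measure, or letting the domain exhaust a model with $\Lambda = \partial\mathbb{D}$) and apply the Evdoridou--Martí-Pete--Rempe construction to realise it as a wandering domain of a transcendental meromorphic function.

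For part (b), the normalisation $g_n(0) = 0$ is crucial: each $g_n$ is then an inner function fixing the origin, so by Löwner's classical theorem its radial boundary extension $g_n^{*}$ preserves the normalised Lebesgue measure $\lambda$ on $\partial\mathbb{D}$. Combining the semiconjugacy $\pi_{n+1} \circ g_n = f \circ \pi_n$ with the trichotomy (compactly contained / accumulating on the boundary / oscillating) recalled from \cite{FerreiraJove} in Section~\ref{subsect-FerreiraJove1}, and using that $U_n$ is bounded (so that no spurious escaping-type behaviour interferes), one checks that $g_n^{-1}(\Lambda_{n+1}) = \Lambda_n$ modulo $\lambda$-null sets. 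The measure-preserving property of $g_n^{*}$ then yields $\lambda(\Lambda_n) = \lambda(\Lambda_{n+1})$, and induction completes the argument.

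For part (c), the hypothesis $\sum_n(1 - |g_n'(0)|) = \infty$ forces the forward compositions $g_{n-1} \circ \cdots \circ g_0$ to be strongly hyperbolically contracting at the origin. Adapting the ergodic framework of Doering--Mañé to the non-autonomous setting (where the $g_n$ need not coincide but share the fixed point $0$ and satisfy the divergence condition), one obtains exactness of the sequence $\{g_n^{*}\}$ with respect to $\lambda$. The $\lambda$-a.e.\ defined boundary extension $\pi_n^{*}$ from Theorem~\ref{thmA} semiconjugates $g_n^{*}$ to $f|_{\partial U_n}$, and transfers exactness to the boundary map. Exactness in particular implies $\lambda(\Lambda_n) > 0$, so by part (b) and the trichotomy in (a) one must have $\Lambda_n = \partial\mathbb{D}$ for every $n$. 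Finally, to exhibit a transcendental meromorphic function whose wandering domain has singleton exact components, I again invoke the Evdoridou--Martí-Pete--Rempe flexibility theorem, prescribing a sequence of compact multiply connected sets $K_n$ and holomorphic transition maps whose boundary behaviour decomposes orbit-by-orbit into isolated points, then realising this sequence as a wandering domain.

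The main obstacle is the exactness argument in part (c): unlike the invariant case of Theorem~\ref{thmC}, where one can invoke Doering--Mañé's autonomous theory and a recurrence dichotomy, here one must construct the exactness directly from the Blaschke-type divergence condition in a genuinely non-autonomous regime, and then transfer it through the boundary extension $\pi_n^{*}$, which is only almost-everywhere defined and far from a homeomorphism. Handling distortion of the compositions on boundary scales, and ensuring that the boundedness of the $U_n$ provides enough compactness to carry through the argument, will be the technically delicate point.
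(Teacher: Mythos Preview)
Your overall structure is sound, but there are two concrete problems in part~(c), and one missed simplification in part~(b).

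\textbf{The non-autonomous exactness is already known.} You flag the exactness of the sequence $\{G_n^*\}$ under the divergence condition $\sum_n(1-|g_n'(0)|)=\infty$ as the ``main obstacle'' and propose to adapt Doering--Ma\~n\'e. In fact this is a classical result of Pommerenke \cite{Pom81} (see also \cite[Lemma~7.5]{BEFRS2}): for inner functions fixing the origin, the divergence condition directly yields exactness of the composition sequence on $(\partial\mathbb{D},\lambda)$. No new ergodic-theoretic work is required here; the paper simply cites this.

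\textbf{Exactness does not give $\lambda(\Lambda_n)>0$.} Your claim ``exactness in particular implies $\lambda(\Lambda_n)>0$'' is a genuine gap. Exactness only gives the dichotomy $\lambda(\Lambda_0)\in\{0,1\}$, since $\Lambda_0=(G_n^*)^{-1}(\Lambda_n)$ for all $n$; it does not by itself rule out $\lambda(\Lambda_0)=0$. To exclude the zero-measure case and conclude $\Lambda_n=\partial\mathbb{D}$, the paper argues as in Theorem~\ref{thmB}\ref{thmB-itema}: assume $\Lambda_0$ is a proper Cantor set, take an interval $I\subset\partial\mathbb{D}\smallsetminus\Lambda_0$, and use the blow-up property of elliptic inner functions (Lemma~\ref{lemma-elliptic-inner}) to force $G_n^*(I)=\partial\mathbb{D}$ for some $n$, contradicting the invariance $\Lambda_0=(G_n^*)^{-1}(\Lambda_n)$. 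Your argument skips this step entirely.

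\textbf{Part~(b) via properness.} Your route through L\"owner's lemma and the escaping/bounded/bungee trichotomy works, but the paper's approach is cleaner: boundedness of $U_n$ makes $f|_{U_n}$ proper, and then Corollary~\ref{cor-invariance-limit-sets-proper} gives the exact set equality $\Lambda_n=(g_n^*)^{-1}(\Lambda_{n+1})$ (not merely modulo null sets). Measure preservation from L\"owner then finishes it in one line.

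\textbf{The examples.} For the realisation statements in~(a) and the singleton-exact-components example, the paper is more concrete than your sketch: it uses the result of \cite{MartiPeteRempeWatermann-mero} that any regular bounded domain can be realised as a wandering domain. For a Cantor limit set of positive measure it takes a \emph{champagne subregion} of $\mathbb{D}$ (via \cite{OCS04}); for $\Lambda=\partial\mathbb{D}$ it takes an infinitely connected attracting basin of a rational map (Theorem~\ref{thmB}\ref{thmB-itema}). For the singleton example, the key observation is that the construction in \cite{MartiPeteRempeWatermann-mero} can be arranged so that $f^n|_{\overline{U}}$ is univalent for all $n$, which immediately forces exact components to be singletons; your ``orbit-by-orbit decomposition'' is vaguer than necessary.
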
 

In Section \ref{section-WD}, we will see that more can be said about multiply connected wandering domains of entire functions, for which the geometry and dynamics is known to be much more restrictive \cite{BakerWD3,KS08,BergweilerRipponStallard_MCWD,RipponStallard-MCWDMero,RipponStallard-Eremenko,Ferreira_MCWD1,FR24}.

\begin{ack*}
We would like to thank Núria Fagella, Lasse Rempe, Phil Rippon, and Gwyneth Stallard for interesting discussions and comments. We would also like  to thank Linda Keen for helping us to find a copy of the paper \cite{Keen}.
\end{ack*} 
\section{Iteration in class $ \mathbb{K} $ and associated inner functions}

Consider $ f\in\mathbb{K} $, i.e. \[f\colon \widehat{\mathbb{C}}\smallsetminus E(f)\to \widehat{\mathbb{C}},\] where $ \Omega(f)\coloneqq\widehat{\mathbb{C}}\smallsetminus E(f)  $ is the largest set where $ f $ is meromorphic and $ E(f) $ is the set of singularities of $ f $, which is assumed to be closed and countable. Note that $ \Omega(f)$ is open. 

\begin{notation*}
	Having fixed a function $ f\in\mathbb{K} $, we denote $ \Omega(f) $ and $ E(f) $ simply by $ \Omega $ and $ E $, respectively. Given a domain $ U\subset\Omega $, we denote by $ \partial U $ the boundary of $ U $ in $ \Omega $, and we keep the notation $ \widehat{\partial} U $ for the boundary with respect to $ \widehat{\mathbb{C}} $.
\end{notation*}

Maps in class $ \mathbb{K} $ satisfy the following analytic property.
\begin{lemma}{\bf (Ahlfors islands property, {\normalfont\cite[Example 1]{BakerDominguezHerring}})}\label{lemma-ahlfors}
	Let $ f \in\mathbb{K}$. Then, $ f $ satisfies the {\em $ 5 $-island property}. That is,  for every $ z_0 \in E(f)$,  given any neighbourhood $ U $ of $ z_0 $ and 5 simply connected disjoint Jordan domains $ D_1 , \dots, D_5$ in $ \widehat{\mathbb{C}} $, there exists $ D\subset U $ such that $ f $ maps conformally $ D $ onto $ D_i $ for some $ i\in \left\lbrace 1,\dots, 5\right\rbrace  $.
\end{lemma}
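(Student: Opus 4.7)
The lemma as stated is cited from Baker--Dom\'inguez--Herring, so the ``proof'' amounts to invoking the classical Ahlfors theory together with an argument that the 5-island property is preserved under the composition operation that generates the class $\mathbb{K}$. My plan would be to organise it as follows.

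The plan is to first recall the \emph{classical Ahlfors five-islands theorem}: if $f$ is transcendental meromorphic on $\mathbb{C}$, then for any five pairwise disjoint Jordan domains $D_1,\dots,D_5\subset\widehat{\mathbb{C}}$ and any neighbourhood $U$ of $\infty$, there is a simply connected domain $D\subset U$ mapped conformally by $f$ onto some $D_i$. This handles the base case, since for meromorphic $f$ the only essential singularity is $\infty$ (or, after a change of coordinates, any isolated essential singularity of $f$ seen as a map of $\widehat{\mathbb{C}}$). The base case therefore takes care of \emph{all} meromorphic generators of $\mathbb{K}$, including the point at infinity and prescribed essential singularities after a M\"obius change of variable.

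Next, I would argue by induction on the composition depth. Write $f = f_1\circ f_2\circ\cdots\circ f_k$ with each $f_j$ meromorphic on $\mathbb{C}$; by definition this exhausts $\mathbb{K}$. Let $z_0\in E(f)$, and let $1\le j\le k$ be the smallest index such that $g\coloneqq f_j\circ\cdots\circ f_k$ already fails to extend meromorphically at $z_0$. By minimality, $h\coloneqq f_{j+1}\circ\cdots\circ f_k$ is meromorphic at $z_0$, so either $h(z_0)$ is an essential singularity of $f_j$, or $h$ maps a neighbourhood of $z_0$ in a locally conformal (or finite branched) way and the essential singularity has to be created by the remaining outer block $f_1\circ\cdots\circ f_{j-1}$. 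In either case one reduces to applying the inductive hypothesis (or the base case) to the block carrying the essential singularity, thereby producing a conformal island $\widetilde{D}$ mapped by that block onto the chosen $D_i$.

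The main work, and where I expect the main obstacle, is the \textbf{pullback step}: given the island $\widetilde{D}$, one must produce a simply connected $D\subset U$ mapped conformally by the full composition $f$ onto $D_i$. Concretely, one has to pull $\widetilde{D}$ back through the remaining blocks in the composition, choosing a connected component of the preimage which is (i) contained in the prescribed neighbourhood $U$ of $z_0$, (ii) simply connected, and (iii) mapped univalently, so that the entire composition is conformal on it. Simply-connectedness and univalence are preserved as long as at each stage one restricts to a single sheet; this can be arranged by shrinking $U$ and/or shrinking the $D_i$ slightly (the hypothesis that all five $D_i$ are Jordan is essential here to keep components well-behaved). Containment in $U$ uses the fact that near an essential singularity of the inner block, preimages of any given compact set accumulate at $z_0$, so one can always select a component inside $U$. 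Once this pullback is carried out coherently through all $k$ stages, the resulting $D$ satisfies the required conclusion, completing the induction.
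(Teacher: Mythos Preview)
The paper does not prove this lemma; it is stated with a citation to Baker--Dom\'inguez--Herring and used as a black box. So there is no ``paper's own proof'' to compare against, and your proposal must stand on its own.

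Your inductive strategy has a genuine gap. First, the dichotomy in your inductive step is illusory: by your choice of $j$ as the \emph{smallest} index for which $g=f_j\circ\cdots\circ f_k$ fails to be meromorphic at $z_0$, the map $h=f_{j+1}\circ\cdots\circ f_k$ is meromorphic at $z_0$, so necessarily $h(z_0)=\infty$ and $f_j$ is transcendental; the ``or'' branch where the singularity is created by the outer block contradicts the minimality of $j$ and cannot occur. Second, and more seriously, the only coherent reading of your argument is that you apply the inductive hypothesis to the outer block $f_1\circ\cdots\circ f_j$ at $\infty$ to obtain $\widetilde D$, and then pull back through $h$. This reduces the composition length only when $j<k$; when $j=k$ (equivalently, $z_0=\infty$ and $f_k$ is transcendental) the ``outer block'' is $f$ itself and nothing has been gained. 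To close this case you would instead need to apply the base case to $f_k$ over five auxiliary Jordan targets $E_1,\dots,E_5$ that are themselves conformal preimages of the $D_i$ under the outer map $f_1\circ\cdots\circ f_{k-1}$, and then post-compose. Producing five such disjoint Jordan preimages is a separate lemma (for a single rational map it follows from a Riemann--Hurwitz count, and for the general $\mathbb K$ case it needs its own induction), and your proposal neither states nor proves it. Finally, your suggestion of ``shrinking the $D_i$ slightly'' is not permitted: the conclusion demands $f(D)=D_i$ exactly, not an island over a subdomain.
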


\subsection{Basic results on iteration}\label{subsect-basic-iteration}
The dynamics of functions in class $ \mathbb{K} $ has been studied in \cite{Bolsch-Fatoucomponents, BakerDominguezHerring}. The standard theory of Fatou and Julia for rational or entire functions extends successfully to this more general setting. The Fatou set $ \mathcal{F}(f) $ is defined as the largest open set in which $ \left\lbrace f^n\right\rbrace _n $ is well-defined and normal, and the Julia set $ \mathcal{J}(f) $, as its complement in $ \widehat{\mathbb{C}} $. We need the following properties.

\begin{thm}{\bf (Properties of Fatou and Julia sets, {\normalfont \cite[Thm. A]{BakerDominguezHerring}})}\label{thm-dynamicsK}
	Let $ f\in\mathbb{K} $. Then,
	\begin{enumerate}[label={\em (\alph*)}]
		\item\label{Fatou1} $ \mathcal{F}(f) $ is completely invariant in the sense that $ z\in\mathcal{F}(f) $ if and only if $ f(z)\in \mathcal{F}(f) $;
		\item\label{Fatou2} for every positive integer $ n $, $ f^n\in\mathbb{K} $, $ \mathcal{F}(f^n)=\mathcal{F}(f) $ and $ \mathcal{J}(f^n)=\mathcal{J}(f) $;
	\item\label{Fatou3}  $ \mathcal{J}(f) $ is perfect.
	\end{enumerate}
\end{thm}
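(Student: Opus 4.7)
The plan is to establish the three items in order, adapting classical Fatou--Julia theory for rational maps to the class $\mathbb{K}$ by invoking the Ahlfors $5$-island property of Lemma \ref{lemma-ahlfors} at points of the singular set $E$.

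For \ref{Fatou1}, I would argue directly from the definition of normality. Suppose $V$ is an open neighbourhood of $z \in \mathcal{F}(f) \cap \Omega$ on which $\{f^n\}_n$ is well-defined and normal, and that $f(z) \in \Omega$. Since $f$ is meromorphic near $z$, it is open there, so $f(V)$ is a neighbourhood of $f(z)$; shrinking $V$ if necessary so that $f|_V$ has a local section, every normally convergent subsequence of $\{f^n\}_n$ on $V$ descends to a normally convergent subsequence of $\{f^{n-1}\}_n$ on $f(V)$, giving $f(z) \in \mathcal{F}(f)$. Conversely, if $\{f^n\}_n$ is normal on a neighbourhood $W$ of $f(z)$, then by continuity $\{f^{n+1}\}_n = \{f^n \circ f\}_n$ is normal on $f^{-1}(W) \cap \Omega \ni z$, and hence so is $\{f^n\}_n$ itself.

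For \ref{Fatou2}, the assertion $f^n \in \mathbb{K}$ is built into the definition of $\mathbb{K}$ as the smallest class closed under composition. The inclusion $\mathcal{F}(f) \subseteq \mathcal{F}(f^n)$ is immediate since $\{f^{kn}\}_k$ is a subsequence of $\{f^m\}_m$. For the reverse inclusion, assume $\{f^{kn}\}_k$ is normal on an open $V \ni z$. Applying \ref{Fatou1} iteratively, for each $j \in \{0, \dots, n-1\}$ the family $\{f^{kn+j}\}_k = \{f^{kn} \circ f^j\}_k$ is normal on some neighbourhood $V_j$ of $z$. Since $n$ is finite, the intersection $\bigcap_{j=0}^{n-1} V_j$ remains a neighbourhood of $z$ on which the full sequence $\{f^m\}_m$ is normal, whence $z \in \mathcal{F}(f)$.

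The main obstacle is \ref{Fatou3}. First I would show $\mathcal{J}(f) \neq \emptyset$: otherwise $\{f^n\}_n$ would be normal on all of $\Omega$, but the Ahlfors $5$-island property applied near any point of $E$ produces conformal copies of five disjoint disks under some iterate, incompatible with normality (for rational $f$ the usual degree-growth argument applies). To rule out isolated points, suppose $z_0 \in \mathcal{J}(f)$ is isolated, so that a punctured neighbourhood $V \smallsetminus \{z_0\}$ lies in $\mathcal{F}(f)$. If $z_0 \notin E$, the classical Montel argument, combined with density of backward orbits of non-exceptional Julia points, yields the contradiction. If $z_0 \in E$, we apply Lemma \ref{lemma-ahlfors} to an iterate $f^N$ at $z_0$, with five pairwise disjoint Jordan domains chosen so that at least three avoid the values taken by $\{f^n\}_n$ on a relatively compact piece of $V \smallsetminus \{z_0\}$; the resulting univalent islands near $z_0$ accumulate on $z_0$ and force $\{f^n\}_n$ to omit at most two values on $V \smallsetminus \{z_0\}$ only via careful selection, contradicting normality there by Montel. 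The delicate point, and where I expect the real work, is arranging the islands so as to meet the punctured normality region and thereby break normality of $\{f^n\}$, rather than merely of $\{f^N\}$.
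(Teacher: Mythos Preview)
The paper does not prove this theorem: it is stated with a citation to \cite[Thm.~A]{BakerDominguezHerring} and used as a background result, so there is no argument in the paper to compare your proposal against.

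On the proposal itself: part \ref{Fatou1} is standard and fine. In \ref{Fatou2}, the reverse inclusion $\mathcal{F}(f^n)\subseteq\mathcal{F}(f)$ has a genuine gap. You write $f^{kn+j}=f^{kn}\circ f^j$ and invoke \ref{Fatou1} ``iteratively'', but \ref{Fatou1} gives invariance of $\mathcal{F}(f)$ under $f$, not invariance of $\mathcal{F}(f^n)$ under $f$; to conclude that $\{f^{kn}\}_k$ is normal near $f^j(z)$ you would need the latter, which is essentially what is being proved. The alternative decomposition $f^{kn+j}=f^j\circ(f^n)^k$ runs into a different obstacle: a limit function of $\{(f^n)^k\}_k$ may take values in $E(f^j)$, where $f^j$ fails to be continuous, so post-composition does not automatically preserve normality. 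This is exactly where the class-$\mathbb{K}$ argument departs from the rational case and requires the extra work carried out in \cite{BakerDominguezHerring}. For \ref{Fatou3} you correctly identify the Ahlfors islands property as the key tool near points of $E$, but, as you yourself acknowledge, the final paragraph is a heuristic outline rather than a proof; the mechanism by which islands force non-normality on the punctured neighbourhood is not made precise.
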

Note that, if $ E(f)=\emptyset $, then $ f $ is a rational map.  If $ E(f)\neq\emptyset $, we always assume $ \infty\in E(f) $. Except in the case when $ f $ is a transcendental entire function or conjugate to a self-map of $ \mathbb{C}^* $, points which are eventually mapped to a singularity are dense in $ \mathcal{J}(f) $, as shows the following lemma.
\begin{lemma}{\bf (Characterization of the Julia set, {\normalfont \cite[p. 650]{BakerDominguezHerring}})}\label{lemma-julia-set}
	Let $ f\in\mathbb{K} $ be such that $ \infty $ is an essential singularity and which has at least one non-omitted pole.
	Then, for all $ n\geq 0 $,
	\[ E(f^n)= E(f)\cup f^{-1}(E(f))\cup\dots \cup f^{-(n-1)}(E(f)) \]
	and \[\mathcal{J}(f)=\overline{\bigcup\limits_{n\geq 0} E(f^n)}.\]
\end{lemma}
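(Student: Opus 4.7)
The statement has two parts: the formula for $E(f^n)$ and the characterization of $\mathcal{J}(f)$. For the formula I would proceed by induction on $n$. The base case $n=1$ is tautological; for the inductive step, writing $f^n = f \circ f^{n-1}$, a point $z$ fails to lie in the meromorphy domain of $f^n$ precisely when either $z \in E(f^{n-1})$ or $f^{n-1}(z) \in E$, and combining with the induction hypothesis yields $E(f^n) = \bigcup_{k=0}^{n-1} f^{-k}(E)$. The inclusion $\overline{\bigcup_n E(f^n)} \subseteq \mathcal{J}(f)$ is then immediate: points of $E(f^n)$ lie in $\mathcal{J}(f^n) = \mathcal{J}(f)$ by Theorem~\ref{thm-dynamicsK}\ref{Fatou2}, since the iterate family cannot be normal at a point where $f^n$ is not even defined, and $\mathcal{J}(f)$ is closed.

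For the reverse inclusion I plan to argue by contradiction. Fix $z \in \mathcal{J}(f)$ and suppose some open neighbourhood $U$ of $z$ is disjoint from every $E(f^n)$. Then every $f^n$ is meromorphic on $U$ and, since $f^{-n}(\infty) \subseteq E(f^{n+1})$ by the first part, no $f^n$ has a pole in $U$; equivalently, $\infty$ is commonly omitted by $\{f^n\}$ on $U$. The family $\{f^n\}$ is not normal on $U$ (because $z \in \mathcal{J}(f)$), so Montel's three-value theorem forces the set of common omitted values in $\widehat{\mathbb{C}}$ to contain at most two points in total.

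I now use the non-omitted pole hypothesis to manufacture a third common omitted value. Let $p$ be a non-omitted pole, so $f(p) = \infty$ and $p = f(q)$ for some $q \in \Omega$. The key observation is a contrapositive propagation: if $a \in \Omega$ and $f(a)$ is commonly omitted by $\{f^n\}$ on $U$, then so is $a$ (otherwise $a = f^m(w)$ for some $w \in U$, whence $f(a) = f^{m+1}(w)$ would be attained). Applying this first with $a = p$, using that $f(p) = \infty$ is omitted, and then with $a = q$, using that $f(q) = p$ is then omitted, shows that $p$ and $q$ are both commonly omitted. Moreover $q \neq \infty$ since $q \in \Omega$, and $q \neq p$ since $f(q) = p \neq \infty = f(p)$; hence $\{\infty, p, q\}$ provides three distinct common omitted values, contradicting non-normality. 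I expect the only delicate point to be arranging the propagation step so that the non-omission hypothesis is used twice in the correct order to extract three genuinely distinct omitted values; once that is in place, Montel closes the argument.
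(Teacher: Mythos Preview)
The paper does not prove this lemma; it is quoted with a page reference to Baker--Dom\'inguez--Herring and used as a black box. There is therefore no paper proof to compare against.

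Your argument is correct and is essentially the standard one. The induction for $E(f^n)$ is routine. For the density statement, the key inclusion $f^{-n}(\infty)\subset E(f^{n+1})$ (via the first part and $\infty\in E(f)$) indeed forces each $f^n|_U$ to omit $\infty$, so the iterates are in fact holomorphic on $U$ and the classical Montel theorem applies directly. The propagation step is sound: if $a\in\Omega$ and $f(a)$ is omitted by every $f^n$ on $U$, then so is $a$, because $a=f^m(w)$ with $w\in U$ would give $f(a)=f^{m+1}(w)$, and $w\notin E(f^{m+1})$ by assumption on $U$. Applying this twice along the chain $q\mapsto p\mapsto\infty$ yields three distinct omitted values ($p\neq\infty$ since $p\in\Omega$; $q\neq\infty$ likewise; $q\neq p$ since $f(q)=p\neq\infty=f(p)$), and Montel gives the contradiction. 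The only cosmetic point is that the statement is written for $n\geq 0$, whereas your induction starts at $n=1$; the case $n=0$ is the empty union $E(f^0)=\emptyset$, which you may want to note explicitly.
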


By Theorem \ref{thm-dynamicsK}{\em\ref{Fatou1}}, Fatou components (i.e. connected components of $ \mathcal{F}(f) $) are mapped among themselves, and hence classified into periodic, preperiodic, or wandering. By Theorem \ref{thm-dynamicsK}{\em\ref{Fatou2}}, the study of periodic Fatou components reduces to the invariant ones, i.e. those for which $ f(U)\subset U $. Those Fatou components are classified into  basins of attraction, parabolic basins, Siegel disks, Herman rings, and Baker domains \cite[Thm. C]{BakerDominguezHerring}. A \textit{Baker domain} is, by definition, a periodic Fatou component $ U $ of period $ k\geq 1 $ for which there exists $ z_0\in\widehat{\partial} U$ such that $ f^{nk}(z)\to z_0 $, for all $ z\in U $ as $ n\to\infty $, but $ f^k $ is not meromorphic at $ z_0 $. In this case, $ z_0 $ is accessible from $ U $  \cite[p. 658]{BakerDominguezHerring}.

\begin{thm}{\bf (Connectivity of Fatou components, {\normalfont \cite{Bolsch-Fatoucomponents}})} Let $ f\in\mathbb{K} $, and let $ U $ be a periodic Fatou component of $ f $. Then, the connectivity of $ U $ is 1, 2, or $ \infty $.\end{thm}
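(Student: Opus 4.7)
My plan is to reduce to the invariant case and then, assuming $U$ has finite connectivity $c\geq 3$, derive a contradiction by splitting on the degree of $f|_U$. By Theorem~\ref{thm-dynamicsK}\ref{Fatou2}, any iterate of $f$ lies in $\mathbb{K}$ and shares the same Fatou set, so I may replace $f$ by a suitable $f^k$ and assume $U$ is invariant, $f(U)\subset U$. Let $K_1,\dots,K_c$ be the connected components of $\widehat{\mathbb{C}}\setminus U$, all compact in $\widehat{\mathbb{C}}$; then $\pi_1(U)$ is free of rank $c-1$.

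If $f|_U\colon U\to U$ has finite degree $d$, it is a proper holomorphic map between planar surfaces of Euler characteristic $2-c$, and the Riemann--Hurwitz formula reads
\[
(2-c)(d-1) \;=\; N \;\geq\; 0,
\]
with $N$ the total branching. Since $c\geq 3$ the left-hand side is negative unless $d=1$, so necessarily $d=1$ and $N=0$, i.e.\ $f|_U$ is a conformal automorphism of $U$; hence $U$ must be a rotation domain. But the conformal automorphism group of a planar domain of connectivity $c\geq 3$ is finite, and therefore cannot realise the irrational rotation dynamics that a Siegel disk or Herman ring would require, contradicting $c\geq 3$.

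For the remaining case $\deg f|_U=\infty$, some singularity $z_0\in E(f)$ must be accessible from $U$: for Baker domains this is the convergence point by definition, while in other cases the infinite local degree forces a pole or essential singularity to lie on $\widehat{\partial} U$. I would then invoke the $5$-island property (Lemma~\ref{lemma-ahlfors}) near $z_0$: choosing five pairwise disjoint Jordan domains $D_1,\dots,D_5\subset U$ whose enclosing loops separate the components $K_j$ in a prescribed way, every neighbourhood of $z_0$ contains a domain $D$ mapped conformally by $f$ onto some $D_i$, with $D\subset U$ by normality and invariance. Iterating the construction yields an infinite family of disjoint topological disks $D^{(n)}\subset U$ accumulating at $z_0$, whose boundary curves represent pairwise non-homotopic elements of $\pi_1(U)$, forcing the rank of $\pi_1(U)$ to be infinite and contradicting $c<\infty$.

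The main obstacle is this infinite-degree step: the $5$-island property alone does not automatically guarantee that the preimage domains $D^{(n)}$ contribute genuinely new loops to $\pi_1(U)$, rather than ones already represented by the $c-1$ existing generators. Overcoming this requires an astute initial choice of the five target domains $D_i$, so that each preimage $D^{(n)}$ clustering at $z_0$ is forced to separate some $K_j$'s in a configuration not already realised. This combinatorial argument, exploiting the interplay between the local abundance of preimages near a singularity and the rigid topological constraint of finite connectivity, is the technical heart of the proof and essentially reproduces Bolsch's original reasoning.
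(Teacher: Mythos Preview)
The paper does not prove this theorem; it is quoted from \cite{Bolsch-Fatoucomponents} with no argument given, so there is no in-paper proof to compare against. What follows concerns only the soundness of your proposal.

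Your proper (finite-degree) case is essentially correct, with one slip and one hidden hypothesis. The slip: from $d=1$ you conclude that $U$ ``must be a rotation domain'', but univalent Baker domains exist. The repair is immediate and in fact implicit in your next sentence: if $\mathrm{Aut}(U)$ is finite (which it is for planar domains of connectivity $c\geq 3$), then $f^n|_U=\mathrm{id}$ for some $n$, whence by the identity theorem $f^n$ is M\"obius, contradicting the presence of essential singularities (and, for rational $f$, the nontriviality of $\mathcal{J}(f)$). The hidden hypothesis: Riemann--Hurwitz requires $f|_U\colon U\to U$ to be \emph{proper}, not merely of finite degree; for transcendental $f$ one can have $f(U)\subsetneq U$. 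The honest dichotomy is therefore proper versus non-proper, not finite versus infinite degree.

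The non-proper case is, as you yourself say, only a plan, and the mechanism you sketch does not work. The $5$-island property yields disks $D^{(n)}\subset U$ accumulating at a singularity $z_0\in\widehat\partial U$, but $z_0$ lies on a single complementary component $K_{j_0}$, and every sufficiently small loop in $U$ near $z_0$ is either null-homotopic or homotopic to a fixed loop encircling $K_{j_0}$. Accumulation at one boundary point therefore cannot manufacture infinitely many independent generators of $\pi_1(U)$; the ``astute choice of the $D_i$'' you invoke cannot overcome this, because the obstruction is local at $z_0$. Bolsch's actual argument is of a different flavour: it tracks how $f$ (and its local inverses) act on the finitely many complementary components $K_1,\dots,K_c$ and derives a contradiction from that combinatorics, rather than from an island construction. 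Your proposal in this half would need a genuinely new idea to be completed.
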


Wandering domains may exhibit any connectivity  \cite{BKL90,Ferreira_MCWD1}.

\subsection{Harmonic measure with respect to a Fatou component. Theorem \ref{thmA}}

Let $ U $ be a (multiply connected) Fatou component for $ f\in \mathbb{K} $, and let $ \pi\colon \mathbb{D}\to U $ be a universal covering map.
Our first goal  is to prove Theorem \ref{thmA}, which asserts  that the {\em radial extension} $ \pi^*\colon\partial \mathbb{D}\to\partial U  $, where $$ \pi^*(\xi)\coloneqq\lim \limits_{t\to 1^-}\pi (t\xi) $$ is the {\em radial limit} of $ \pi_U $ at $ \xi\in\partial \DD $, is well-defined $ \lambda $-almost everywhere, where $ \lambda $ denotes the Lebesgue measure on $ \partial\mathbb{D} $. 

Recall that, for a hyperbolic domain $ U $, the existence of a radial extension for $ \pi $ is equivalent to $ \Chat\smallsetminus U $ having positive logarithmic capacity (\cite[Chap. VII.5]{Nev70}, see also \cite[Corol. B]{FerreiraJove}). 
In this case, setting $ \pi(0)=p\in U $, one defines the {\em harmonic measure} with respect to the domain $ U $ and with basepoint $ p\in U $, $ \omega_U(p, \cdot) $, as the push-forward measure of $ \lambda $ under the radial extension $ \pi^* $. More precisely, given a Borel set $ A\subset\widehat{\mathbb{C}}$, \[\omega_U(p, A)\coloneqq\lambda ((\pi^*)^{-1}(A)).\]
Note that the harmonic measure $ \omega_U(p,\cdot) $ is well-defined. Indeed, the set \[(\pi^*)^{-1}(A)=\left\lbrace \xi\in\partial\mathbb{D}\colon\pi^*(\xi)\in A\right\rbrace \]is a Borel set of $\partial \mathbb{D} $ \cite[Prop. 6.5]{Pom92}, and hence measurable. 
We also note that the definition of $ \omega_U(p, \cdot) $ is independent of the choice of $ \pi$, provided it satisfies $ \pi(0)=p$, since $ \pi$ is uniquely determined up to pre-composition with a rotation of $ \mathbb{D} $, and $ \lambda $ is invariant under rotation. 

We say that $ U $ admits a {\em harmonic measure} $ \omega_U $ if the previous construction can be carried out.  Theorem \ref{thmA} states that Fatou components always admit a harmonic measure. 
It also asserts that the support of the harmonic measure is precisely $ \widehat{\partial} U $, which leads to interesting ergodic properties, as we will see.

\begin{obs}\label{remark-thmA}
	Let us note that the content of Theorem \ref{thmA} is already well-known for certain classes of functions (although the general statement is new). 
	First, if $ f\colon\widehat{\mathbb{C}}\to\widehat{\mathbb{C}} $ is a rational map, then $ \widehat{\partial} U $ is a uniformly perfect set \cite{ManeRocha, Hinkkanen_rat, Hinkkanen_polynomials}, and this already implies that $ \widehat{\partial} U $ has positive logarithmic capacity and the support of the corresponding harmonic measure is $ \widehat{\partial} U $ \cite[Thm. 1]{Pommerenke_UnifPerfect}. Hence, Theorem \ref{thmA} holds when $ f $ is rational (see also \cite[Sect. 6]{DM91}). 
	
	Second, when $ f\colon{\mathbb{C}}\to{\mathbb{C}} $ is a transcendental entire function, all periodic Fatou components are simply connected \cite{Baker84}, and hence satisfy Theorem \ref{thmA}. Multiply connected wandering domains of entire functions are known to admit a harmonic measure  \cite[Thm. 1.6(c)]{BergweilerRipponStallard_MCWD}. Meromorphic functions with one omitted pole can be conjugate to self-maps of $ \mathbb{C}^* $, for which all Fatou components are simply or doubly connected \cite[Thm. 1]{Bak87}, and thus Theorem \ref{thmA} holds trivially.
\end{obs}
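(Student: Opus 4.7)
The plan is to verify the two ingredients of Theorem~\ref{thmA}---that $\pi^*$ is defined $\lambda$-almost everywhere and that $\mathrm{supp}\,\omega_U=\widehat{\partial}U$---in each of the three classical settings singled out in the remark, by directly assembling and, where needed, slightly strengthening the quoted literature. In every case, the two ingredients reduce respectively to a statement about the logarithmic capacity of $\widehat{\mathbb{C}}\setminus U$ and to a topological/distortion statement about $\widehat{\partial}U$.

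For a rational $f$, I would begin from the theorem of Ma\~n\'e--Rocha and Hinkkanen \cite{ManeRocha, Hinkkanen_rat, Hinkkanen_polynomials} that the Julia set, and hence the boundary of every Fatou component, is uniformly perfect. Pommerenke's \cite[Thm.~1]{Pommerenke_UnifPerfect} then yields at once that $\widehat{\mathbb{C}}\setminus U$ has positive logarithmic capacity (so $\pi^*$ is defined $\lambda$-a.e.\ by the capacity criterion recalled in Section~2.2) and that the harmonic measure from any basepoint in $U$ has support equal to $\widehat{\partial}U$. This delivers both halves of Theorem~\ref{thmA} simultaneously; the more hands-on argument of \cite[Sect.~6]{DM91} can be cited as an alternative reference.

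For a transcendental entire $f$, I would split cases. By Baker \cite{Baker84}, every periodic Fatou component is simply connected (and pre-periodic ones map into such); then $\pi$ is a Riemann map, the classical Fatou theorem gives the a.e.\ radial extension, and the full-support statement follows from perfectness of $\widehat{\partial}U$ (Thm.~\ref{thm-dynamicsK}{\em\ref{Fatou3}}) combined with the standard fact that in a simply connected domain every boundary point is the radial limit of the Riemann map on a set of positive $\lambda$-measure. If $U$ is a multiply connected wandering domain, the full conclusion of Theorem~\ref{thmA} is already recorded in \cite[Thm.~1.6(c)]{BergweilerRipponStallard_MCWD}. Finally, if $f$ is meromorphic with exactly one omitted pole, a M\"obius change of variable conjugates $f$ to a self-map of $\mathbb{C}^*$, and by \cite[Thm.~1]{Bak87} every Fatou component has connectivity one or two. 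The simply connected case is again treated as above; in the doubly connected case the complement $\widehat{\mathbb{C}}\setminus U$ contains two non-trivial compact pieces separating $0$ from $\infty$ (each carrying at least one additional Julia point by perfectness of $\mathcal{J}(f)$), so positive logarithmic capacity of the complement and perfectness of $\widehat{\partial}U$ are immediate.

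The main point requiring a little care in the actual write-up is the upgrade from mere existence of $\omega_U$ to the statement $\mathrm{supp}\,\omega_U=\widehat{\partial}U$ in those sub-cases where the cited literature only records existence (notably the simply connected entire case and the doubly connected $\mathbb{C}^*$ case). There one must combine perfectness of $\widehat{\partial}U$ with a Koebe-type distortion argument to show that every boundary point lies in the closure of the $\pi$-image of a positive-measure set of radii. This is classical but is not always stated in the exact form required, and deserves a brief explicit justification---which also motivates the unified, harmonic-measure-based treatment developed from Section~\ref{subsect-basic-iteration} onward.
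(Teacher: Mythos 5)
Your proposal takes essentially the same route as the paper: the remark is established by assembling exactly the references you quote, case by case, and your observation that the only delicate point is upgrading existence of $\omega_U$ to $\mathrm{supp}\,\omega_U=\widehat{\partial}U$ is accurate (the paper's proof of Theorem \ref{thmA} in fact performs that upgrade uniformly, via the comparison principle for harmonic measure together with the blow-up property of the Julia set, rather than case by case). One sentence of yours is false as written: by Privalov's theorem a non-constant Riemann map attains no single value on a set of positive $\lambda$-measure, so no boundary point ``is the radial limit of the Riemann map on a set of positive $\lambda$-measure''; what is needed, and what you correctly state in your final paragraph, is that for every $x\in\widehat{\partial}U$ and every $r>0$ the set $\{\xi\in\partial\mathbb{D}\colon \varphi^*(\xi)\in D(x,r)\}$ has positive measure.
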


\begin{proof}[Proof of Theorem \ref{thmA}]
	According to Remark \ref{remark-thmA}, it is enough to prove the theorem for $ f\in\mathbb{K} $ for which $ \infty $ is an essential singularity and having at least one pole which is not omitted.
	
	First we have to see that $ \widehat{\mathbb{C}}\smallsetminus U $ has positive logarithmic capacity. Since sets of zero logarithmic capacity have zero Hausdorff dimension \cite[Thm. 10.1.3]{Pom92}, it suffices to  prove that $ \mathcal{J}(f) $ has positive Hausdorff dimension,  for $ f\in \mathbb{K} $. This was proved for meromorphic functions by Stallard \cite{Stallard}. As it is explained in \cite{Stallard}, the proof relies on constructing an iterated function system generated by inverse branches of $ f $ whose limit set is contained in the Julia set, by means of the 5-island property. By Lemma \ref{lemma-ahlfors}, the 5-island property holds for any  $ f\in\mathbb{K} $, so the whole construction can be carried out. We leave further details to the reader.
	
	We are left to prove that $ \textrm{supp } \omega_U=\widehat{\partial} U $, that is, for $ x\in\widehat\partial U $ and $ r>0 $, we shall see that $ \omega_U(D(x,r))>0 $. We shall distinguish two cases, if $ U\subsetneq\mathcal{F}(f) $ or if $ U=\mathcal{F}(f) $. 
	
	First, assume $ U\subsetneq\mathcal{F}(f) $. 
	We will apply the following result to appropriate domains.
	\begin{lemma}{\bf (Comparison principle for harmonic measure, {\normalsize \cite[Corol. 21.1.14]{Conway2}})}
		Let $U, \ \widetilde{U}\subset\widehat{\mathbb{C}} $ be  hyperbolic domains, with $ U\subset \widetilde{U}$. Assume $ \widetilde{U} $ admits a harmonic measure. Then, for all $ z\in U $ and any measurable set $ B\subset {\partial}\widetilde{U}\cap\partial U $, 
		\[\omega_U(z, B) \leq \omega_{\widetilde{U}} (z, B).\]
	\end{lemma}
	Note that, by taking $B = \partial U\cap\partial\widetilde{U}$, we obtain
	\begin{equation}\label{eq:harmonic}
	\omega_{\widetilde{U}}(z,{\partial}\widetilde{U}\smallsetminus \partial U) \leq \omega_U(z,{\partial}U\smallsetminus {\partial}\widetilde{U}).
	\end{equation}
	
	Let us choose the domain $ \widetilde{U} $ as follows. By the blow-up property of the Julia set, one can find a topological disk $ V\subset D(x,r) $ such that $\overline{V}$ belongs to a Fatou component different from $ U $. Then, let \[\widetilde{U}\coloneqq U\cup (D(x,r)\smallsetminus \overline V).\]
	
	Clearly, \begin{align*}
		&	 U\subset  \widetilde{U},\\
		&	 \partial U\smallsetminus \partial  \widetilde{U} = \partial U\cap D(x,r), \\
		&	 \partial  \widetilde{U}\smallsetminus \partial U  \supset \partial V. \numberthis \label{eq:inclusion}\\
	\end{align*}
	By (\refeq{eq:harmonic}) and (\refeq{eq:inclusion}),
	\[\omega_{\widetilde{U}}(z, \partial V)\leq  \omega_{\widetilde{U}}(z,\partial\widetilde{U}\smallsetminus \partial U ) \leq \omega_U(z,{\partial}U\smallsetminus {\partial}\widetilde{U})= \omega_U(z,\partial U\cap D(x,r)) .\] Moreover, $0<\omega_{\widetilde{U}}(z, \partial V)$ since $ \partial V $ is a non-degenerate continuum. Thus, $$ 0<\omega_U(z,\partial U\cap D(x,r)) $$  as desired.
	
	Now, assume $ U=\mathcal{F}(f) $.
	As before, let us restrict to the case when $ \infty $ is an essential singularity with one non-omitted pole. It is clear that the iterated function system considered by Stallard \cite{Stallard} leads to the existence of five disjoint Jordan domains $ D_1,\dots, D_5 $ such that $$D_i\cap \mathcal{J}(f)=D_i\cap \partial U, \hspace{0.5cm}i=1,\dots, 5,$$ has positive Hausdorff dimension (and hence, positive logarithmic capacity). Then, take $ n\geq1 $ such that $ E(f^n)\cap D(x,r)\neq \emptyset $ (which exists by Lemma \ref{lemma-julia-set}). Note that $ f^n\in\mathbb{K} $, and hence satisfies the Ahlfors islands property (Lemma \ref{lemma-ahlfors}). Then, there exists $ D\subset D(x,r) $ and $ F_n $ inverse branch of $ f^n $ such that $ F_n(D_i)=D $, for some $ i\in \left\lbrace 1,\dots, 5\right\rbrace  $. Then,
	$ D \cap\mathcal{J}(f)=D\cap\partial U$ has positive Hausdorff dimension, and hence positive logarithmic capacity. Therefore, $ \omega_U(D(x,r))>0 $, as desired. This ends the proof of the theorem.
\end{proof} 
\begin{cor}\label{cor-radial-extension} Let $ f\in\mathbb{K} $, let $ U $ be a Fatou component of $ f $, and let $\pi\colon\mathbb{D}\to U$ be a universal covering map. Let
	\[\Theta_\Omega\coloneqq \left\lbrace \xi\in\partial\mathbb{D}\colon \pi^*(\xi) \ \textrm{\em   exists and belongs to }\Omega(f) \right\rbrace. \] Then, $ \lambda (\Theta_\Omega)=1 $.
\end{cor}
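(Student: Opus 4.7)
The plan is to combine Theorem \ref{thmA} with the Lusin--Privalov uniqueness theorem, using the fact that, by definition of the class $\mathbb{K}$, the exceptional set $E=E(f)$ is countable.

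First, by Theorem \ref{thmA}, the set $\Theta\coloneqq\{\xi\in\partial\mathbb{D}\colon \pi^*(\xi)\text{ exists}\}$ satisfies $\lambda(\Theta)=1$. It therefore suffices to prove that
\[
N\coloneqq\{\xi\in\Theta\colon\pi^*(\xi)\in E\}=(\pi^*)^{-1}(E\cap\widehat{\partial}U)
\]
has $\lambda$-measure zero. By the definition of the harmonic measure $\omega_U$ (push-forward of $\lambda$ by $\pi^*$), we have $\lambda(N)=\omega_U(E\cap\widehat{\partial}U)$, and since $E$ is countable, countable additivity reduces the problem to showing that $\omega_U(\{e\})=0$ for every individual $e\in E$, equivalently, that $(\pi^*)^{-1}(\{e\})$ is a $\lambda$-null subset of $\partial\mathbb{D}$.

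The key step is then to invoke the Lusin--Privalov uniqueness theorem: a meromorphic function on $\mathbb{D}$ whose radial limit equals a fixed value $c\in\widehat{\mathbb{C}}$ on a set of positive Lebesgue measure must be constantly equal to $c$. Applied to the universal covering $\pi\colon\mathbb{D}\to U\subset\widehat{\mathbb{C}}$ (which is non-constant, hence not identically $e$ since $e\notin U$), this yields $\lambda((\pi^*)^{-1}(\{e\}))=0$ for each $e\in E$. If one prefers a more elementary route, after post-composing with a Möbius transformation sending $e$ to $\infty$, the statement becomes that the holomorphic function $\pi$ cannot have radial limit $\infty$ on a positive-measure set, which follows, for example, by applying Fatou's theorem to $1/(\pi-e)$ when $e\neq\infty$ (or to $1/\pi$ when $e=\infty$).

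Combining these observations, $\lambda(N)=\sum_{e\in E\cap\widehat{\partial}U}\omega_U(\{e\})=0$, so $\lambda(\Theta_\Omega)=\lambda(\Theta)-\lambda(N)=1$, as required. The only mild subtlety is conceptual rather than technical: one must remember the paper's convention that $\partial U$ denotes the boundary taken in $\Omega$, so that $\widehat{\partial}U\setminus\partial U\subset E$, and hence $\pi^*(\xi)\in\Omega$ is equivalent to $\pi^*(\xi)\in\partial U$ whenever the radial limit exists; no additional work is needed to place the limit in $\partial U$ beyond excluding the countable exceptional set $E$.
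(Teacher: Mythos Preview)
Your proof is correct and follows essentially the same route as the paper: both use Theorem~\ref{thmA} to get almost-everywhere existence of $\pi^*$, then reduce to showing $(\pi^*)^{-1}(E)$ is $\lambda$-null by combining countability of $E$ with Privalov's uniqueness theorem applied to each fibre $(\pi^*)^{-1}(\{e\})$. The paper states this a bit more tersely (``$\pi^*$ achieves different values $\lambda$-almost everywhere''), while you spell out the harmonic-measure reformulation and offer an alternative via Fatou's theorem, but the underlying argument is the same.
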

\begin{proof}
	According to Theorem \ref{thmA}, for $ \lambda $-almost every $ \xi\in\partial\mathbb{D} $, the radial limit exists and belongs to $ \widehat{\partial }U $. Hence, it is enough to prove that the set
	\[\Theta_E\coloneqq \left\lbrace \xi\in\partial\mathbb{D}\colon \pi^*(\xi)\in E(f)\right\rbrace \] has zero $ \lambda $-measure. But this follows immediately from the fact that $ E(f) $ is countable, and $ \pi^* $ achieves different values $ \lambda $-almost everywhere (by Privalov's Theorem, see e.g. \cite[Sect. 6.1]{Pom92}). Hence, $ \Theta_E $ is a countable union of sets of zero measure, so it has zero measure.
\end{proof}

\begin{obs}{\em (Functions of finite type)}\label{remark-functions-finite-type}
	The theory of Fatou and Julia extends successfully to maps of finite type (i.e. having only finitely many singular values), as developed by Epstein \cite{Epstein-thesis} (see also \cite{Astorg-thesis}). All Fatou components of such maps are eventually periodic \cite[Thm. 7, p. 148]{Epstein-thesis}, so it makes sense to ask whether Theorem \ref{thmA} also holds in this case. The answer is yes, since finite type maps satisfy the Ahlfors Island property \cite[Prop. 9, p. 88]{Epstein-thesis}, and hence the previous proof can be carried out without any obstacle. Moreover, it is easy to see that Theorem \ref{thmC} (stated for maps in class $\mathbb{K}$, to be proved in Sect. \ref{section-basins-trans}) holds also in this setting, since the proof only depends on the map restricted to the basin, not on the global features of the function.
\end{obs}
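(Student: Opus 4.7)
The plan is to inspect the proofs of Theorems \ref{thmA} and \ref{thmC} and verify that every tool used generalizes from class $\mathbb{K}$ to the setting of finite type maps in the sense of Epstein \cite{Epstein-thesis}. Fix therefore a finite type map $f$ and let $U$ be a Fatou component of $f$. By \cite[Thm.~7, p.~148]{Epstein-thesis} every Fatou component is eventually periodic, so the basic classification of Fatou components applies, and we may associate a universal covering $\pi\colon\mathbb{D}\to U$ and inner functions $g$ just as in Section~2.

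First I would recover Theorem \ref{thmA} in this setting. The only two analytic inputs into the proof in the body of the paper are (i) Lemma \ref{lemma-ahlfors} (the Ahlfors $5$-island property), used both inside Stallard's construction of an iterated function system forcing $\dim_H \mathcal{J}(f)>0$ and in the argument for the case $U=\mathcal{F}(f)$; and (ii) Lemma \ref{lemma-julia-set}, used to produce points of $E(f^n)$ in a prescribed disk. For (i), Epstein shows in \cite[Prop.~9, p.~88]{Epstein-thesis} that finite type maps satisfy the Ahlfors islands property, so Stallard's inverse-branch construction can be carried out verbatim and yields $\dim_H (\mathcal{J}(f)\cap D_i\cap\partial U)>0$ for suitable Jordan domains $D_i$; hence $\widehat{\mathbb{C}}\smallsetminus U$ has positive logarithmic capacity and a harmonic measure $\omega_U$ exists. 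For the statement $\mathrm{supp}\,\omega_U=\widehat{\partial}U$, the case $U\subsetneq\mathcal{F}(f)$ uses only the blow-up property of the Julia set together with the comparison principle for harmonic measure, neither of which depend on $f$ being in class $\mathbb{K}$. For $U=\mathcal{F}(f)$, one replaces the use of Lemma \ref{lemma-julia-set} by the fact that points in the backward orbit of any singularity (which is nonempty by finiteness of $SV(f)$ together with the hypothesis that $U=\mathcal{F}(f)$ is nontrivial) are dense in $\mathcal{J}(f)$ for finite type maps, or more directly by applying the Ahlfors islands property at an arbitrary boundary point. Corollary \ref{cor-radial-extension} transfers as well, since $E(f)$ is still countable (in fact finite in the sense relevant to singular values, though possibly larger for the exceptional set; what matters is the countability used in Privalov).

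Next I would dispatch Theorem \ref{thmC}. Here the essential observation, already highlighted in the remark, is that the proof in Section \ref{section-basins-trans} only invokes the restriction $f|_U\colon U\to U$, together with the associated inner function $g$ and the harmonic measure $\omega_U$ provided by Theorem \ref{thmA}. None of the arguments require global class $\mathbb{K}$ information about $f$ on $\widehat{\mathbb{C}}$; the only places where the ambient function appears are (a) through the Ahlfors islands property, now available from \cite[Prop.~9, p.~88]{Epstein-thesis}, (b) through the existence of $\omega_U$ with $\mathrm{supp}\,\omega_U=\widehat\partial U$, established in the previous paragraph, and (c) through the hypothesis that $SV(f|_U)\Subset U$, which is a purely local condition.

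The only real obstacle I anticipate is confirming that each auxiliary fact quoted during the proofs of Theorems \ref{thmA} and \ref{thmC}, when traced back, genuinely depends only on the Ahlfors islands property, the completeness/invariance properties of $\mathcal{F}(f)$, and local behavior on $U$; any step that secretly uses the closed countability of $E(f)$ globally or the composition-closure of class $\mathbb{K}$ would have to be replaced by the corresponding finite type analogue from \cite{Epstein-thesis, Astorg-thesis}. Having verified this in each case above, both theorems hold in the finite type setting.
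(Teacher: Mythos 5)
Your proposal is correct and follows essentially the same route as the paper, which justifies the remark precisely by observing that the Ahlfors islands property (available for finite type maps by Epstein) is the only analytic input needed to rerun the proof of Theorem \ref{thmA}, and that the proof of Theorem \ref{thmC} uses only data local to the basin. Your more careful tracing of the case $U=\mathcal{F}(f)$ (replacing the use of Lemma \ref{lemma-julia-set} by density of backward orbits of singularities) is a welcome elaboration of the paper's ``without any obstacle,'' but it does not change the argument.
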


\subsection{The associated inner function and its radial extension}

We aim to study multiply connected Fatou components, either periodic (which we assume to be invariant) or wandering. More precisely, if $ U $ is an invariant Fatou component and $ \pi\colon\mathbb{D}\to U $ is a universal covering map, one gets the following commutative diagram.
\[\begin{tikzcd}
	\DD \arrow{r}{g} \arrow[swap]{d}{\pi}&  \DD  \arrow{d}{\pi} \\	
	U \arrow{r}{f}& U
\end{tikzcd}
\] 
 In the case when $ U $ is a wandering domain, let $ U_n $ be the Fatou component which contains $ f^n(U) $, and consider universal covering maps $ \pi_n\colon\DD\to U_n $.
\[\begin{tikzcd}
	\DD \arrow{r}{g_0} \arrow{d}{\pi_0}&  \DD \arrow{r}{g_1} \arrow{d}{\pi_1}&  \DD \arrow{r}{g_2} \arrow{d}{\pi_2}&  \DD \arrow{r}{g_3} \arrow{d}{\pi_3}&\dots \\	
	U_0 \arrow{r}{f}& U_1\arrow{r}{f}& U_2\arrow{r}{f}& U_3\arrow{r}{f}& \dots
\end{tikzcd}
\] 
In order to fit both situations (invariant and wandering), we shall work under the following setting.

\begin{setting}\label{setting} Let $ f\in\mathbb{K} $, and  let $ U$, $V $ Fatou components with $ f\colon U\to V $.
	Consider  $ \pi_U\colon\DD\to U $, $ \pi_V\colon\DD\to V $ universal covering maps, and let $ g\colon\mathbb{D}\to\mathbb{D} $ be such that $ \pi_V \circ g= f\circ \pi_U$.
	\[\begin{tikzcd}
		\DD \arrow{r}{g} \arrow[swap]{d}{\pi_U}&  \DD  \arrow{d}{\pi_V} \\	
		U \arrow{r}{f}& V
	\end{tikzcd}
	\] 
\end{setting}

{The following proposition, which is the cornerstone of all the subsequent construction,  has appeared in special cases before;  see e.g. \cite[Prop. 1.1]{EvdoridouVasiliki2020FA}, \cite[Prop. 5.6]{Jov24}, and \cite[Lemma 3.2]{Ferreira_MCWD2}.}

\begin{prop}\label{prop-inner-function}
	In the Setting \ref{setting},	$ g $ is an {\em inner function}, i.e. 	a holomorphic self-map of the unit disk $ g\colon\mathbb{D}\to\mathbb{D} $ such that, for $ \lambda $-almost every point  $\xi\in \partial\mathbb{D} $, $$ g^*(\xi)\coloneqq\lim\limits_{t\to 1^-}g(t\xi)  \in\partial \mathbb{D} .$$
\end{prop}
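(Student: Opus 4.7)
The plan is to fix a $\xi \in \partial\mathbb{D}$ satisfying simultaneously: (a) $\pi_U^*(\xi)$ exists and belongs to $\partial U \cap \Omega$, which holds $\lambda$-a.e. by Corollary \ref{cor-radial-extension}; and (b) $g^*(\xi)$ exists as an element of $\overline{\mathbb{D}}$, which holds $\lambda$-a.e. by the classical Fatou theorem applied to the bounded holomorphic self-map $g$ of $\mathbb{D}$ (whose existence as a holomorphic map is given by standard lifting through the covering $\pi_V$, the domain $\mathbb{D}$ being simply connected). The task reduces to ruling out the possibility that $g^*(\xi) \in \mathbb{D}$ on a set of positive measure.

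First I would record the elementary boundary-transport property: for any two Fatou components $U$, $V$ of $f$ with $f(U) \subset V$, one has $f(\partial U \cap \Omega) \subset \partial V$. Indeed, $\partial U \subset \mathcal{J}(f)$, and by Theorem \ref{thm-dynamicsK}\ref{Fatou1} the Julia set is completely invariant, so $f(x) \in \mathcal{J}(f)$ for every $x \in \partial U \cap \Omega$. On the other hand, $x$ is a limit of points of $U$ and $f$ is continuous at $x$, so $f(x) \in \overline{V}$. Since $V \subset \mathcal{F}(f)$ is disjoint from $\mathcal{J}(f)$, this forces $f(x) \in \overline{V} \setminus V \subset \partial V$.

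Next, I would run the contradiction argument. Suppose the set $A \subset \partial\mathbb{D}$ of $\xi$ satisfying (a), (b), and $g^*(\xi) \in \mathbb{D}$ has $\lambda(A) > 0$. Fix such a $\xi$ and set $w := g^*(\xi) \in \mathbb{D}$. Taking $t \to 1^-$ along the radius at $\xi$ in the commutative relation
\[
\pi_V(g(t\xi)) = f(\pi_U(t\xi)),
\]
the left-hand side tends to $\pi_V(w) \in V$ by continuity of $\pi_V$ at the interior point $w$, while the right-hand side tends to $f(\pi_U^*(\xi))$ by continuity of $f$ at $\pi_U^*(\xi) \in \Omega$, and this limit lies in $\partial V$ by the boundary-transport property established above. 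This contradicts $V \cap \partial V = \emptyset$, so $g^*(\xi) \in \partial\mathbb{D}$ for $\lambda$-almost every $\xi \in \partial \mathbb{D}$, i.e.\ $g$ is inner.

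The only potentially delicate point is verifying $f(\partial U \cap \Omega) \subset \partial V$, which requires careful use of the definition of $\partial U$ as the boundary taken in $\Omega$ and of the continuity of $f$ there; everything else is a direct chase through the commutative diagram once Corollary \ref{cor-radial-extension} (hence Theorem \ref{thmA}) is available.
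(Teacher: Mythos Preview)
Your proof is correct and follows essentially the same approach as the paper's own argument: both assume $|g^*(\xi)|<1$ on a set of positive measure, restrict to the full-measure set from Corollary~\ref{cor-radial-extension}, pass to the limit in the commutative relation $\pi_V\circ g = f\circ\pi_U$, and derive a contradiction from the complete invariance of the Julia set. Your version is simply more explicit in isolating the boundary-transport step $f(\partial U\cap\Omega)\subset\widehat{\partial} V$ and in writing out the limit computation, whereas the paper compresses this into the single line $f(\pi_U^*(\xi))=\pi_V(g^*(\xi))\in V$ versus $\pi_U^*(\xi)\in\mathcal{J}(f)$.
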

We say that $ g $ is an {\em inner function associated with} $ f|_U $. Note that any two inner functions associated with $ f|_U $ are conformally equivalent, since universal covering maps are uniquely determined up to precomposition by an automorphism of $\mathbb{D}$. Before proving Proposition \ref{prop-inner-function}, we need the following lemma.
\begin{lemma}{\bf (Radial limits commute)}\label{lemma-radial-limits}
	In the Setting \ref{setting}, consider $\xi\in\Theta_\Omega$, i.e. such that 
	$\pi_U^*(\xi)$ exists and belongs to $ \Omega(f) $.  Then, $g^*(\xi)$ and $\pi_V^*(g^*(\xi))$ exist, and
	\[ f(\pi_U^*(\xi)) = \pi_V^*(g^*(\xi))\in\widehat{\partial} V. \]
\end{lemma}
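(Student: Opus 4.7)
The plan is to trace the commutative diagram $\pi_V\circ g=f\circ \pi_U$ along the radius $[0,\xi)$ and promote the convergence of $\pi_U(t\xi)$ to convergence of $g(t\xi)$ on $\partial\mathbb{D}$. Write $w:=\pi_U^*(\xi)$. Because $w\in\Omega$, $f$ is meromorphic and hence spherically continuous at $w$, so $f(\pi_U(t\xi))\to f(w)$ as $t\to 1^-$. Moreover $f(\pi_U(t\xi))\in V$ for every $t$, while $w\in\partial U\subset\mathcal{J}(f)$ together with the forward invariance $f(\mathcal{J}(f)\cap\Omega)\subset\mathcal{J}(f)$ gives $f(w)\in\overline V\cap\mathcal{J}(f)=\widehat{\partial}V$. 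So once I exhibit $\eta\in\partial\mathbb{D}$ with $g^*(\xi)=\eta$, the claimed identity $f(\pi_U^*(\xi))=\pi_V^*(\eta)\in\widehat{\partial}V$ will follow.

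The first step is to show $|g(t\xi)|\to 1$. Because $\pi_V\colon\mathbb{D}\to V$ is a holomorphic universal covering of the hyperbolic domain $V$, it is a local hyperbolic isometry and satisfies
\[ d_V(\pi_V(0),\pi_V(z))\;\le\;d_{\mathbb{D}}(0,z)\qquad\text{for every }z\in\mathbb{D}. \]
Plugging in $z=g(t\xi)$ and using $\pi_V(g(t\xi))=f(\pi_U(t\xi))$, the left-hand side tends to $+\infty$ as $f(\pi_U(t\xi))\to f(w)\in\widehat{\partial}V$, so $d_{\mathbb{D}}(0,g(t\xi))\to\infty$, i.e.\ $|g(t\xi)|\to 1$.

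The subtler step is to upgrade this escape into convergence at a single point of $\partial\mathbb{D}$. The curve $\beta(t):=f(\pi_U(t\xi))$ provides an accessible approach to $f(w)$ from inside $V$, so one can choose a simply connected Jordan subdomain $V_0\subset V$ (a thin tubular neighborhood of a tail of $\beta$) that contains a tail of $\beta$ and has $f(w)$ as an accessible boundary point. Let $\widetilde{V}_0$ be the connected component of $\pi_V^{-1}(V_0)$ containing the corresponding tail of $g([0,\xi))$. Since $V_0$ is simply connected, $\pi_V\colon\widetilde{V}_0\to V_0$ is a biholomorphism, and its inverse $F$ is a conformal equivalence of simply connected domains. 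The curve $\beta$ defines a prime end of $V_0$ whose principal point is $f(w)$; by Carath\'eodory's prime-end theorem $F$ extends to a homeomorphism between the prime-end compactifications, so $F(\beta(t))=g(t\xi)$ approaches a single prime end of $\widetilde{V}_0$. Combined with $|g(t\xi)|\to 1$ from the previous step, this forces $g(t\xi)\to\eta$ for some $\eta\in\partial\mathbb{D}\cap\overline{\widetilde{V}_0}$.

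Finally, since $\pi_V$ is a normal meromorphic function on $\mathbb{D}$, the asymptotic value $\pi_V(g(t\xi))\to f(w)$ along the curve $g(t\xi)\to\eta$ forces, by Lindel\"of's theorem (or Lehto--Virtanen in the spherical version), the non-tangential and hence radial limit $\pi_V^*(\eta)=f(w)$. Therefore $g^*(\xi)=\eta$ and $\pi_V^*(g^*(\xi))=f(w)=f(\pi_U^*(\xi))\in\widehat{\partial}V$, as required. I expect the main obstacle to be the prime-end step: $\partial V$ can be topologically wild near $f(w)$ in the multiply connected setting, and one has to choose the tube $V_0$ carefully enough that the prime end it carves out at $f(w)$ has singleton principal point, so that the Carath\'eodory correspondence genuinely pins the lift down to a unique $\eta\in\partial\mathbb{D}$ rather than an arc.
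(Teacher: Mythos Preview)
Your overall plan coincides with the paper's: push the commutative diagram along the radius, observe that $\pi_V(g(t\xi))=f(\pi_U(t\xi))\to f(w)\in\widehat{\partial}V$, deduce that $g(R_\xi)$ lands at some $\eta\in\partial\mathbb{D}$, and then invoke Lehto--Virtanen. The paper, however, does not attempt your tubular/prime-end construction; it simply quotes the known fact that any curve in $V$ landing at a boundary point lifts under $\pi_V$ to a curve landing at a point of $\partial\mathbb{D}$ (this is \cite[Prop.~5.3]{FerreiraJove}). So the only substantive divergence is your Step~3, where you try to reprove that lifting fact from scratch.

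That step has a genuine gap, and it lies on the $\widetilde{V}_0$ side rather than the $V_0$ side. If $V_0$ is a Jordan tube then its prime end at $f(w)$ \emph{is} a point, so $\beta$ does converge to a single prime end of $V_0$; via the conformal map $F$, the lifted curve $g(R_\xi)$ converges to a single prime end $\hat\eta$ of $\widetilde{V}_0$. But the impression of $\hat\eta$ is a connected compact subset of $\partial\widetilde{V}_0\subset\overline{\mathbb{D}}$, and nothing you have said prevents it from containing a nondegenerate arc of $\partial\mathbb{D}$. Knowing $|g(t\xi)|\to 1$ only places the cluster set on $\partial\mathbb{D}$; it does not collapse it to a point. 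Your worry at the end is therefore correct, but mislocated: no amount of care in carving $V_0$ controls the boundary geometry of $\widetilde{V}_0$, because the two side-arcs of your tube are themselves curves in $V$ landing at $f(w)$, and asking whether their lifts land at a common point of $\partial\mathbb{D}$ is the very question you are trying to answer.

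The clean way to close the gap (and essentially how \cite[Prop.~5.3]{FerreiraJove} is proved) is a Privalov-type contradiction: if $g(R_\xi)$ accumulated on a nondegenerate arc $I\subset\partial\mathbb{D}$, then for every $\zeta$ interior to $I$ the curve crosses $R_\zeta$ at points $z_n\to\zeta$ radially, giving $\pi_V^*(\zeta)=\lim\pi_V(z_n)=f(w)$ wherever $\pi_V^*(\zeta)$ exists. By Theorem~\ref{thmA} this happens $\lambda$-a.e.\ on $I$, so $\pi_V^*\equiv f(w)$ on a set of positive measure, contradicting the Riesz--Privalov uniqueness theorem. Inserting this argument (or citing the lifting result directly) makes your proof complete and essentially equivalent to the paper's.
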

\begin{proof} 
	
	Let $  R_\xi(t)\coloneqq t\xi$, $  t\in \left[ 0,1\right)  $. By assumption, $$ \pi_U( R_\xi (t))\to \pi^*_U(\xi)\eqqcolon w\in\partial U\subset \Omega(f),$$  as $ t\to 1^- $. Since $ f $ is continuous at $ w $ and $ \pi_V\circ g=f\circ \pi_U $, \[\pi_V(g( R_\xi (t)))=f(\pi_U (R_\xi (t)))\to f(w)\in\widehat\partial V, \]as $ t\to 1 ^-$. Thus, $ f(w) $ is an accessible point in $ \widehat{\partial} V $. Hence, there exists a point $\zeta\in\partial\DD$ with $\pi_V^*(\zeta) = f(w)$ (see \cite[Prop. 5.3]{FerreiraJove}), and furthermore  the curve $g(R_\xi)$ lands at $\zeta$. By the Lehto-Virtanen Theorem (see e.g. \cite[Sect. 4.1]{Pom92}) applied to the curve $g(R_\xi)$, we have that $g^*(\xi) = \zeta$, as desired.
\end{proof}

\begin{proof}[Proof of Proposition \ref{prop-inner-function}]
	Since $ g $ is a holomorphic self-map of the unit disk, its radial extension is well-defined $ \lambda $-almost everywhere (see e.g. \cite[Thm. A.3]{Milnor}). We need to show that $ \left| g^*(\xi)\right| =1 $ for $ \lambda $-almost every $ \xi\in\partial\mathbb{D} $. 
	
	Assume, on the contrary, that there exists $ A\subset\partial\mathbb{D} $ with $ \lambda (A)>0 $ and $ \left| g^*(\xi)\right| <1 $ for all $ \xi\in A $. By Corollary \ref{cor-radial-extension}, we can assume without loss of generality, that $ \pi_U^*(\xi)\subset\partial U\subset \Omega(f) $ for all $ \xi\in A $. Then, \[ f(\pi_U^*(\xi))=\pi_V(g^*(\xi))\subset V\subset \mathcal{F}(f), \] but $ \pi_U^*(\xi)\in\partial U\subset \mathcal{J}(f)$. This is a contradiction due to the total invariance of the Fatou and Julia sets.
\end{proof}

 Throughout this paper we denote the set of {\em singularities} of $ g $ (i.e. $ \xi\in \partial \mathbb{D} $ such that $ g $ does not extend holomorphically to a neighbourhood of $ \xi $) by $ E(g) $.  Note that the radial limit $ g^*(\xi) $ may exist even in the case when $ \xi\in E(g) $ (in particular, we may have $ E(g)=\partial\mathbb{D} $, but the radial extension $ g^*\colon\partial\mathbb{D}\to \partial\mathbb{D}$ is always well-defined $ \lambda $-almost everywhere). We also assume that any inner function $ g $ is always continued to $ \widehat{\mathbb{C}}\smallsetminus \overline{\mathbb{D} }$ by the reflection principle,
and to $ \partial\mathbb{D}\smallsetminus E(g) $ by analytic continuation. In other words, $ g $ is considered as a meromorphic function $$ g\colon   \widehat{\mathbb{C}}\smallsetminus E(g)   \longrightarrow \widehat{\mathbb{C}}.$$

\subsection*{Iteration of inner functions. Ergodic properties of $ g^*\colon\partial\mathbb{D}\to\partial\mathbb{D} $}
One can consider the measure-theoretical dynamical system induced by the radial extension \[ g^*\colon\partial\mathbb{D}\to\partial\mathbb{D},\] which is well-defined $ \lambda $-almost everywhere. The following is a recollection of ergodic properties of $ g^* $, which we will need in the following sections. As we will see, this ergodic classification depends essentially on the type of inner function given by Cowen's classification \cite{cowen}. Recall that an inner function is {\em elliptic} if and only if its Denjoy-Wolff point lies in $\mathbb{D}$. Inner functions with Denjoy-Wolff point in $\partial\mathbb{D}$ are classified into {\em doubly parabolic}, {\em hyperbolic} and {\em simply parabolic}, according to its internal dynamics \cite{cowen}.

\begin{thm}{\bf (Ergodic properties of $ g^* $)}\label{thm-ergodic-properties}
	Let $ g\colon \mathbb{D}\to\mathbb{D} $ be an inner function, with Denjoy-Wolff point $ p\in\overline{\mathbb{D}} $, and let $ g^*\colon \partial\mathbb{D}\to\partial\mathbb{D} $ be its radial extension to the unit circle. Then, the following hold. \begin{enumerate}[label={\em (\alph*)}]
		
		\item  \label{ergodic-properties-nonsing} $ g^* $ is  non-singular. In particular, for $ \lambda $-almost every $ \xi\in\partial\mathbb{D} $, its infinite orbit under $ g^* $, $ \left\lbrace (g^n)^*(\xi)\right\rbrace _n $, is well-defined.
		
		\item   \label{ergodic-properties-a} $ g^* $ is exact if and only if $ g $ is elliptic or doubly parabolic.
		
		\item  \label{ergodic-properties-b} If $ g^* $ is recurrent, then it is ergodic. In this case, for every $ A\in \mathcal{B} (\mathbb{D})$ with $ \lambda(A)>0 $, we have that for $ \lambda $-almost every $ \xi\in\partial\mathbb{D} $, there exists  a sequence $ n_k\to\infty $ such that $  (g^{n_k})^*(\xi)\in A $. In particular, for $ \lambda $-almost every $ \xi\in\partial\mathbb{D} $, $ \left\lbrace (g^n)^*(\xi) \right\rbrace_n  $ is dense in $ \partial\mathbb{D} $.
		\item \label{ergodic-properties-c} If $ g $ is an elliptic inner function such that $ g(0)=0 $, the Lebesgue measure $ \lambda  $ is invariant under $ g^* $. In particular, $ g^* $ is recurrent.
	\end{enumerate}
\end{thm}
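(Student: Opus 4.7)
The plan is to treat the four items essentially independently; only (b) requires substantial work, and even there one reduces to the main theorem of Doering--Mañé \cite{DM91}. I would begin with (a) and (d) via the pushforward measure. By the standard boundary identity for inner functions, $(g^*)_*\lambda$ equals the harmonic measure on $\partial\mathbb{D}$ seen from $g(0)$, whose density with respect to $\lambda$ is the Poisson kernel $P_{g(0)}$. Since $P_{g(0)}$ is bounded and bounded away from zero on $\partial\mathbb{D}$, the measures $(g^*)_*\lambda$ and $\lambda$ are mutually absolutely continuous, which is exactly the non-singularity of $g^*$. Iterating and intersecting the resulting countable family of full-measure sets on which $(g^n)^*$ is defined yields (a). When $g(0)=0$ the Poisson kernel is identically $1$, so $\lambda$ is $g^*$-invariant, and Poincaré recurrence on a probability space gives (d).

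For (c), I would invoke the Hopf decomposition of a non-singular transformation of a standard probability space into conservative and dissipative parts; on the conservative (= recurrent) part, ergodicity is equivalent to the triviality of the invariant $\sigma$-algebra, and the latter is known to hold for $g^*$ whenever it is recurrent (see \cite{DM91}). From ergodicity together with conservativity one deduces that, for every Borel set $A\subset\partial\mathbb{D}$ with $\lambda(A)>0$ and $\lambda$-a.e. $\xi$, there exist $n_k\to\infty$ with $(g^{n_k})^*(\xi)\in A$. Applying this to a countable basis of open arcs of $\partial\mathbb{D}$ and intersecting the resulting full-measure sets of $\xi$ gives density of $\lambda$-a.e. orbit.

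Part (b) is the delicate step, and here I would simply invoke the theorem of Doering--Mañé \cite{DM91} together with Cowen's classification \cite{cowen}. The rough idea is the following: in the elliptic case, after a Möbius change of coordinate in the source placing the Denjoy--Wolff point at $0$ (which preserves the measure-theoretic type of $g^*$), item (d) provides $\lambda$-invariance, and expansion at $\lambda$-a.e. boundary point forces the pulled-back $\sigma$-algebras $(g^n)^{*-1}\mathcal{B}$ to shrink to the trivial one, yielding exactness. In the doubly parabolic case, Cowen's semiconjugacy to a translation on a half-plane transfers exactness from the translation model, where it is immediate. Conversely, in the hyperbolic and simply parabolic cases the corresponding Cowen model is (semi-)conjugate to a dilation, and one exhibits a non-trivial element of the tail $\sigma$-algebra coming from the horocyclic/angular coordinate, thereby obstructing exactness. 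The main obstacle is precisely this case analysis, which is the content of \cite{DM91}; modulo that reference and Cowen's classification, no further work is needed here.
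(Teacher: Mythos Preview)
The paper does not actually prove this theorem: immediately after the statement it writes ``The previous theorem compiles different results in \cite{Aaronson97, DM91, BFJK-Escaping}. A detailed explanation can be found in \cite[Thm.~3.10]{Jov24}.'' So there is nothing to compare your argument against beyond the choice of references, and on that front you are aligned with the paper --- you correctly identify \cite{DM91} (together with Cowen's classification) as carrying the weight, exactly as the paper does.

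Your sketches of (a), (d) and (c) are correct and standard: the L\"owner identity $(g^*)_*\lambda = P_{g(0)}\,d\lambda$ gives non-singularity and, when $g(0)=0$, invariance; Poincar\'e recurrence then yields (d); and for (c) the passage from recurrence~$+$~ergodicity to ``almost every orbit visits every positive-measure set infinitely often'' via a countable basis is the right route. One caution on your heuristic for (b): exactness does \emph{not} follow directly from Cowen's semiconjugacy to a translation, since translation on $\mathbb{R}$ with Lebesgue measure is invertible and hence not exact --- the boundary map $g^*$ with respect to $\lambda$ is not the translation model, and the genuine argument in the doubly parabolic case (as in \cite{DM91} and \cite{Aaronson97}) is more delicate. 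Since you explicitly defer the case analysis to \cite{DM91}, this imprecision is harmless, and overall your proposal is more detailed than the paper's own treatment.
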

The previous theorem compiles different results in \cite{Aaronson97, DM91, BFJK-Escaping}. A detailed explanation can be found in \cite[Thm. 3.10]{Jov24}. 

\begin{lemma}{\bf (Elliptic and doubly parabolic inner functions)}\label{lemma-elliptic-inner}
		Let $ g\colon \mathbb{D}\to\mathbb{D} $ be an inner function, elliptic or doubly parabolic. Let $I\subset\partial\mathbb{D}$ be an open circular interval. Then, there exists $n\geq 0$ such that $ (g^*)^n(I) = \partial\mathbb{D} $.
\end{lemma}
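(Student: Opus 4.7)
The plan is to combine the exactness of $g^*$ from Theorem~\ref{thm-ergodic-properties}\ref{ergodic-properties-a} with the open-map property of $g$ at regular boundary points, thereby upgrading a measure-theoretic statement to the required set-equality.

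First, since $g$ is elliptic or doubly parabolic, Theorem~\ref{thm-ergodic-properties}\ref{ergodic-properties-a} yields exactness of $g^*$; combined with non-singularity (Theorem~\ref{thm-ergodic-properties}\ref{ergodic-properties-nonsing}), the standard consequence for non-singular exact maps gives $\lambda((g^*)^n(A))\to 1$ for every measurable $A$ with $\lambda(A)>0$. Applied to our open arc $I$, which has positive Lebesgue measure, this produces some $N\geq 0$ with $\lambda\bigl((g^*)^N(I)\bigr)$ arbitrarily close to $1$.

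Second, at each $\xi\in\partial\mathbb{D}\smallsetminus E(g)$, the Schwarz reflection principle extends $g$ holomorphically across $\xi$ and maps a sub-arc of $\partial\mathbb{D}$ around $\xi$ onto a sub-arc of $\partial\mathbb{D}$ around $g^*(\xi)$ as a (possibly branched) analytic covering. Hence $g^*$ is an open map on $\partial\mathbb{D}\smallsetminus E(g)$, and each iterated image $(g^*)^n(I)$ contains a dense open subset of $\partial\mathbb{D}$ of measure approaching $1$.

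To conclude $(g^*)^n(I)=\partial\mathbb{D}$ for a single $n$, one argues that a persistently missed point $\zeta$ would have an entire backward orbit disjoint from $I$, contradicting the ergodicity that follows from exactness plus recurrence of $g^*$; recurrence holds in the elliptic case by Theorem~\ref{thm-ergodic-properties}\ref{ergodic-properties-c} after conjugating so that $g(0)=0$, and in the doubly parabolic case by Aaronson's classical result. The main obstacle is the \emph{uniformity} of $n$: producing a single $n$ valid simultaneously for every $\zeta\in\partial\mathbb{D}$, rather than a $\zeta$-dependent one. This uniformity ultimately rests on the local expansion of $g^*$ near its (dense) repelling boundary periodic points, or equivalently on a Frostman-type equidistribution of preimages, combined with the openness from the previous paragraph, so that once $(g^*)^N(I)$ contains a dense open set of nearly full measure, one further iteration of $g^*$ absorbs the remaining closed null-set complement.
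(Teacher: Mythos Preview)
Your proposal has a genuine gap exactly where you yourself flag it: the uniformity of $n$. Exactness gives $\lambda\bigl((g^*)^N(I)\bigr)\to 1$, and openness of $g^*$ away from $E(g)$ gives that each image contains an open set, but neither yields the set equality $(g^*)^n(I)=\partial\mathbb{D}$ for a fixed $n$. Your resolution---``local expansion near repelling boundary periodic points'' or ``Frostman-type equidistribution'' so that ``one further iteration of $g^*$ absorbs the remaining closed null-set complement''---is not an argument: an infinite-degree inner function need not be uniformly expanding on $\partial\mathbb{D}$, the complement of $(g^*)^N(I)$ may well meet $E(g)$, and no mechanism is given for why one more iterate covers it. Your earlier attempt via ``a persistently missed point $\zeta$ would have backward orbit disjoint from $I$, contradicting ergodicity'' is also incomplete: ergodicity is a measure-theoretic statement and says nothing about the backward orbit of a single point, and even if that orbit were dense you would only obtain a $\zeta$-dependent $n$. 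Finally, your appeal to recurrence in the doubly parabolic case is unfounded: doubly parabolic inner functions are \emph{not} always recurrent (cf.\ Remark~\ref{remark-dp-Baker-domains}).

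The paper sidesteps all of this by arguing pointwise rather than measure-theoretically. If $g$ has finite degree it is rational with $\mathcal{J}(g)=\partial\mathbb{D}$, and the Julia blow-up property gives the result. If $g$ has infinite degree, one combines \cite[Lemma 8]{BakerDominguez}, \cite[Thm.~2.24]{Bargmann} and \cite[Thm.~A]{BFJK_AbsorbingSets} to obtain that $\bigcup_{n\geq 0}E(g^n)$ is dense in $\partial\mathbb{D}$; hence some singularity of some iterate $g^n$ lies in $I$, and since any arc around a singularity of an inner function is mapped by the radial extension onto all of $\partial\mathbb{D}$, the equality $(g^*)^n(I)=\partial\mathbb{D}$ follows immediately for that specific $n$.
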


\begin{proof} If $g$ has finite degree, then it is a rational map and $\mathcal{J}(f)=\partial\mathbb{D}$. Then, the statement follows from the blow-up property of the Julia set \cite[Corol. 14.2]{Milnor}.
	
In the infinite degree case, the proof follows from	combining \cite[Lemma 8]{BakerDominguez},  \cite[Thm. 2.24]{Bargmann} and \cite[Thm. A]{BFJK_AbsorbingSets}. Indeed, we have that, for elliptic or doubly parabolic inner functions, \[\partial \mathbb{D}=\overline{\bigcup\limits_{n\geq 0} E(g^n)}.\] In particular, there exists $ \xi\in E(g^n) \cap I$. Then, $ (g^*)^n(I) = \partial\mathbb{D} $, as desired.
\end{proof}
\section{The main construction: relating inner functions, limit sets and universal covering maps} \label{section-main-construction}

This section is devoted to the main results of the paper, although they are somewhat technical. We aim to extend the commutative diagram in Setting \ref{setting} to the boundary. That is, let $ f\in\mathbb{K} $, and $ U$, $V $ Fatou components with $ f\colon U\to V $.
Consider  $ \pi_U\colon\DD\to U $ and $ \pi_V\colon\DD\to V $ universal covering maps, and let $ g\colon\mathbb{D}\to\mathbb{D} $ be such that $ \pi_V \circ g= f\circ \pi_U$, i.e.
\[\begin{tikzcd}
	\DD \arrow{r}{g} \arrow[swap]{d}{\pi_U}&  \DD  \arrow{d}{\pi_V} \\	
	U \arrow{r}{f}& V
\end{tikzcd}
\] 
Although we already proved a measure-theoretical extension in terms of radial limits (Lemma \ref{lemma-radial-limits}), we want a more in-depth analysis which leads to a meticulous understanding of the mapping properties. To do so, we will study the interplay between the analytic properties of the associated inner function, the topology of $ \partial U $, and the dynamics of $ f|_{\partial U} $, as done in the simply connected case in \cite{JF23}. Moreover, we take into account the geometry of the domain, described by the group of deck transformations and its limit set, using the tools developed in \cite{FerreiraJove}.

The plan is as follows: first, we gather all the concepts introduced in \cite{FerreiraJove} to describe the boundary behaviour of a universal covering map which are relevant to the current paper; and second, we study the interplay between such boundary behaviour and the previous commutative diagram.

\subsection{Background results. Boundary behaviour of universal covering maps}\label{subsect-FerreiraJove1}

Given a multiply connected domain $ U$, let $ \pi_U\colon\mathbb{D}\to U$ be a universal covering map and  denote by $ \Lambda_U $ the limit set. In the sequel, when we are dealing only with one domain (like in this section), we shall write just $\pi$ and  $ \Lambda $ to denote the universal covering map and the limit set, respectively. 
	Let $ \xi\in\partial \mathbb{D} $; the following definitions are standard.
\begin{itemize}
	
	\item If $ \eta\colon \left[ 0,1\right) \to\mathbb{D} $ is a curve  landing at $ \xi $, the {\em cluster set} $ Cl_\eta(\pi, \xi) $ of $ \pi $ at $ \xi $ along $ \eta $ is defined as the set of values $ w\in\widehat{\mathbb{C}}$ for which there is an increasing sequence $ \left\lbrace t_n\right\rbrace _n \subset (0,1) $ such that $ t_n\to 1 $ and $\pi(\eta(t_n))\to w $, as $ n\to\infty $.

	\noindent In the particular case when $ \eta=R_\xi= \left\lbrace t\xi \colon t\in \left[ 0,1\right)  \right\rbrace  $, we say that $ Cl_\eta(\pi, \xi) $ is  the {\em radial cluster set} of $ \pi $ at $ \xi $, and denote it by $ Cl_R(\pi, \xi) $.
	\item The {\em angular cluster set} $ Cl_\mathcal{A}(\pi, \xi) $ of $ \pi $ at $ \xi $ is defined as the union of the cluster sets along all the curves landing non-tangentially at $ \xi $. 
	\item The {\em (unrestricted) cluster set} $ Cl(\pi, \xi) $ of $ \pi $ at $ \xi $ is the set of values $ w\in\widehat{\mathbb{C}} $ for which there is a sequence $ \left\lbrace z_n\right\rbrace _n \subset\mathbb{D}$ such that $ z_n\to \xi $ and $\pi (z_n)\to w $, as $ n\to\infty $.
\end{itemize}

\begin{defi}{\bf (Boundary behaviour of $ \pi $, {\normalfont \cite[Def. 4.1]{FerreiraJove}}}\label{def:escaping-bounded-bungee})
	Let $ U $ be a multiply connected domain,  let $ \pi\colon\mathbb{D}\to U$ be a universal covering map and let $ \xi\in\partial\mathbb{D} $. Let $ R_\xi(t)=\left\lbrace t\xi \colon t\in \left[ 0,1\right)  \right\rbrace $.
	\begin{itemize}
		\item  We say that $ \xi\in\partial \mathbb{D} $ is {\em of escaping type} for $ \pi $ if $ \pi(R_\xi (t))\to\widehat{\partial} U $, as $ t\to 1^{-} $.
		\item We say that $ \xi\in\partial \mathbb{D} $ is {\em of bounded type} for $ \pi $ if $\left\lbrace  \pi(R_\xi (t))\colon t\in \left[ 0,1\right) \right\rbrace  $ is compactly contained in $ U $.
		\item Otherwise, we say that $ \xi\in\partial \mathbb{D} $ is {\em of bungee type} for $ \pi $.
	\end{itemize}
\end{defi}

\begin{prop}{\bf (Boundary behaviour of non-tangential curves, {\normalfont \cite[4.2, 4.15]{FerreiraJove}})}\label{prop-escaping-bounded-bungee}	Let $ U $ be a multiply connected domain,  let $ \pi\colon\mathbb{D}\to U $ be a universal covering map and let $ \xi\in\partial\mathbb{D} $. 
	\begin{itemize}
		\item  If $ \xi\in\partial \mathbb{D} $ is of {escaping} type and $ \eta $ is a curve landing non-tangentially at $ \xi$, then $ \pi(\eta (t))\to\widehat{\partial} U $ as $ t\to 1^{-} $, and
		\[ Cl_{\mathcal{A}}(\pi, \xi)= Cl_{R}(\pi, \xi)\subset \widehat{\partial} U. \] In particular, $ Cl_{\mathcal{A}}(\pi, \xi) $ is closed and contained in a unique boundary component of $U$. 
		
	\noindent	Conversely, if $ \eta $ is a curve landing  at $ \xi$ and $ \pi(\eta (t))\to\widehat{\partial} U $, as $ t\to 1^{-} $, then  $ \xi$ is of escaping type.
		\item If  $ \xi\in\partial \mathbb{D} $ is of bounded type and $ \eta $ is a curve landing non-tangentially at $ \xi$, then $\left\lbrace  \pi(\eta (t))\colon t\in \left[ 0,1\right) \right\rbrace  $ is compactly contained in $ U $. Moreover,
		\[ Cl_{\mathcal{A}}(\pi, \xi)=  U. \] In particular, $ Cl_{\mathcal{A}}(\pi, \xi) $ is open. 
		
	\noindent	Conversely, if $ \eta $ is a curve landing  at $ \xi$ and $\left\lbrace  \pi(\eta (t))\colon t\in \left[ 0,1\right) \right\rbrace  $ is compactly contained in $ U$, then  $ \xi$ is of bounded type.
		\item If $ \xi\in\partial \mathbb{D} $ is of {bungee} type and $ \eta $ is any curve landing at $ \xi $, then $\left\lbrace  \pi(\eta (t))\colon t\in \left[ 0,1\right) \right\rbrace  $ is neither compactly contained in $ U $ nor converging to $ \partial U $.
	\end{itemize}
\end{prop}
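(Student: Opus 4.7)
The proof decomposes into the three bullet points, each combining a hyperbolic-metric estimate with a covering-space argument. The key classical input is that a universal covering $\pi\colon\DD\to U$ is a local isometry for the hyperbolic metrics on $\DD$ and $U$, hence a contraction from $(\DD, \dist_\DD)$ to $(U, \dist_U)$. Combined with the standard Stolz-angle estimate---any curve $\eta$ landing non-tangentially at $\xi$ satisfies $\dist_\DD(\eta(t), R_\xi) \leq M$ for some $M$ and all $t$ near $1$---this yields $\dist_U(\pi(\eta(t)), \pi(R_\xi)) \leq M$ as well. This single estimate drives all three cases.

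For the escaping case, if $\pi(R_\xi(t)) \to \widehat{\partial} U$, then any point at bounded $\dist_U$ from $\pi(R_\xi(t))$ must also tend to $\widehat{\partial} U$, because the hyperbolic density in $U$ blows up near its boundary. Hence $\pi(\eta(t)) \to \widehat{\partial} U$ for non-tangential $\eta$, giving $Cl_{\mathcal{A}}(\pi,\xi) \subset \widehat{\partial} U$. The equality $Cl_{\mathcal{A}}(\pi,\xi) = Cl_R(\pi,\xi)$ follows because bounded hyperbolic distance forces accumulation values of $\pi$ along a non-tangential curve to coincide with those along the radius. Closedness of $Cl_R(\pi,\xi)$ and its containment in a single component of $\widehat{\partial} U$ come from writing $Cl_R(\pi,\xi) = \bigcap_{t_0<1} \overline{\pi(R_\xi([t_0,1)))}$, a decreasing nested intersection of connected compact subsets of $\widehat{\mathbb{C}}$. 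The converse---that existence of one landing curve $\eta$ with $\pi(\eta(t)) \to \widehat{\partial} U$ implies escaping---is proved contrapositively: if $\xi$ were not escaping, the radius would meet some compact $K \subset U$ at infinitely many times, and using the deck group action on $\pi^{-1}(K)$ together with a bounded-distance comparison would contradict $\pi(\eta(t)) \to \widehat{\partial} U$ along a given landing curve via a Lindelöf-type argument.

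For the bounded case, the Stolz-angle estimate gives $\pi(\eta([0,1))) \subset K'$, where $K'$ is the $M$-hyperbolic neighbourhood in $U$ of the compact set $K$ containing $\pi(R_\xi([0,1)))$; this $K'$ is itself compactly contained in $U$. The striking equality $Cl_{\mathcal{A}}(\pi,\xi) = U$ is the deeper part: I would prove it through the deck transformation group $\Gamma$, using that bounded-type points are exactly the points of $\partial\DD$ that are not conical limit points of $\Gamma$. Concretely, given any $w \in U$ with a preimage $z_0 \in \pi^{-1}(w)$, one constructs a sequence of deck translates $\gamma_n(z_0)$ converging non-tangentially to $\xi$---possible precisely because $\xi$ is not a conical limit point---each of which projects to $w$; hence $w \in Cl_{\mathcal{A}}(\pi,\xi)$. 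The converse reverses the forward direction via the bounded-distance estimate.

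Finally, the bungee case is a formal contrapositive: if a landing curve $\eta$ at $\xi$ had $\pi(\eta([0,1)))$ compactly contained in $U$ or converging to $\widehat{\partial} U$, the converses in the bounded and escaping cases would force $\xi$ into those categories, contradicting the bungee assumption. The main obstacle I foresee is the identification $Cl_{\mathcal{A}}(\pi,\xi) = U$ in the bounded case, as it requires translating the geometric condition ``the radius is compactly contained in $U$'' into the group-theoretic condition ``$\xi$ is not a conical limit point of $\Gamma$'' and then carrying out the density argument for the $\Gamma$-orbit accumulating non-tangentially at such a $\xi$.
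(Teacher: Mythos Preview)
The paper does not prove this proposition here; it cites \cite[4.2, 4.15]{FerreiraJove} and remarks that the proof there is carried out via Ohtsuka's exhaustion method (nested finitely connected subdomains whose boundary curves lift to crosscuts in $\DD$). Your approach via the hyperbolic metric and the Stolz-angle estimate $\dist_\DD(\eta(t),R_\xi)\leq M$ is a genuinely different route, and for most of the statement it works well: the forward implications for escaping and bounded types follow cleanly from the fact that closed hyperbolic neighbourhoods of compacta are compact, and the bungee case is indeed a formal contrapositive. The Ohtsuka method, by contrast, tracks which boundary curves of the exhaustion a given curve crosses, which is more combinatorial but makes the statement about the unique boundary component immediate.

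There is, however, a genuine error in your treatment of $Cl_{\mathcal A}(\pi,\xi)=U$ for bounded-type $\xi$. You write that ``bounded-type points are exactly the points of $\partial\DD$ that are \emph{not} conical limit points of $\Gamma$'' and then construct deck translates $\gamma_n(z_0)\to\xi$ non-tangentially ``possible precisely because $\xi$ is not a conical limit point''. This is self-contradictory: having an orbit accumulate non-tangentially at $\xi$ is exactly the \emph{definition} of $\xi$ being a conical limit point. The correct statement---and the one actually cited in this paper from \cite[Summary 4.15]{FerreiraJove}---is that bounded-type points \emph{are} conical (non-tangential) limit points of $\Gamma$. With the sign flipped, your argument does go through: if $\pi(R_\xi)$ stays in a compact $K\subset U$, then for $t_n\to 1$ the points $R_\xi(t_n)$ lie at bounded hyperbolic distance from orbit points $\gamma_n(z_0)$, so $\gamma_n(z_0)\to\xi$ non-tangentially; then for any $w\in U$ with preimage $w_0$, the points $\gamma_n(w_0)$ also converge non-tangentially to $\xi$ and all project to $w$.

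A secondary concern: the converses in the escaping and bounded cases allow $\eta$ to be an \emph{arbitrary} (possibly tangential) landing curve, so your Stolz-angle estimate is unavailable. Your appeal to ``a Lindel\"of-type argument'' is too vague here---Lindel\"of/Lehto--Virtanen concerns asymptotic \emph{values}, not convergence to the boundary as a set---and you should expect to need the Ohtsuka-exhaustion viewpoint (or an explicit covering-space argument tracking lifts of closed curves) to handle the tangential case.
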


The proof of the previous result is essentially related to Ohtsuka's method of exhausting multiply connected domains, as we explain in \cite[Sect. 3.3]{FerreiraJove}. We give here a sketch of his construction.

Let $ U\subset\widehat{\mathbb{C}} $ be a multiply connected domain, which we assume to be of connectivity larger than two. Let $ \left\lbrace U_n\right\rbrace _{n\geq 1}$  be an exhaustion of $ U$ such that each $ U_n $ is a finitely connected domain compactly contained in $ U $, with $ \overline{U_n} \subset U_{n+1}$ for all $ n\geq 1 $, and $ \bigcup_n U_n =U$. Without loss of generality, we can assume that each $ U_n $ is bounded by (finitely many) simple closed analytic curves, each of them dividing $ U $ into two non-simply connected domains. With an appropriate labelling of the boundary components of $ U_n $ (which we shall not discuss here), the sequence $ \left\lbrace U_n\right\rbrace _{n\geq 1}$ is called an {\em Ohtsuka exhaustion} of $U $. By means of lifting the curves in $ \partial U_n $, one obtains an additional means of determining whether a point  is of bounded, bungee or escaping type \cite[Sect. 4.5]{FerreiraJove}.

Finally, we give a method for determining when the limit set of $ U$ is a Cantor set, by means of examining it in an elementary way (for details, see \cite[Sect. 5.2]{FerreiraJove}).
\begin{defi}{\bf (True crosscut)}\label{def-true-crosscut} A
	\textit{true crosscut} of $U$ is an open Jordan arc $C \subset U$ such that $\overline C = C\cup\{a, b\}$, with $a$ and $b$ distinct and contained on the same boundary component of $U$, and such that $U\setminus C$ has exactly two connected components, one of which is simply connected.
\end{defi}

\begin{prop}
	{\bf (Geometric characterization of limit sets, {\normalfont\cite[Corol. E]{FerreiraJove}})}\label{prop-geometri}
	Let $U\subset\Chat$ be a hyperbolic multiply connected domain, and let $\pi\colon\DD\to U$ be a universal covering map. Then, $\Lambda$ is a Cantor subset of $\partial\DD$ if and only if there exists a true crosscut in $U$.
\end{prop}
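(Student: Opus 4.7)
The plan is to exploit the standard trichotomy for limit sets of torsion-free Fuchsian groups: for a multiply connected hyperbolic $U$, the deck transformation group $\Gamma$ is a nontrivial torsion-free Fuchsian group, and its limit set $\Lambda$ falls into exactly one of three cases, namely (i) $\Lambda$ consists of two points (when $\Gamma$ is cyclic, so $U$ is conformally an annulus), (ii) $\Lambda=\partial\DD$ (when $\Gamma$ is of the first kind), or (iii) $\Lambda$ is a Cantor subset of $\partial\DD$ (when $\Gamma$ is non-elementary of the second kind). The task is therefore to characterise case (iii) geometrically by the existence of a true crosscut.

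For the implication that a true crosscut implies $\Lambda$ is a Cantor set, I would take a true crosscut $C$ with endpoints $a,b$ on the same boundary component of $U$, cutting off a simply connected piece $U_1$. Since $U_1$ is simply connected, each connected component of $\pi^{-1}(U_1)$ is mapped biholomorphically onto $U_1$ by $\pi$, and such a component is bounded in $\overline{\DD}$ by a lift of $C$ together with an open arc $I\subset\partial\DD$, which is the lift of the boundary subarc of $U_1$ on $\partial U$. The arc $I$ lies entirely in $\partial\DD\smallsetminus\Lambda$, because an orbit accumulation inside $I$ would force the lift of $U_1$ to overlap with $\Gamma$-translates. This rules out $\Lambda=\partial\DD$, and the existence of $C$ with multiply connected complement rules out $U$ being an annulus (in which case no crosscut joining one boundary component to itself can bound a simply connected region with nontrivial complement). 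Thus $\Gamma$ is non-elementary of the second kind, so $\Lambda$ is Cantor.

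For the converse, I would pick a complementary arc $I=(\xi_1,\xi_2)$ in $\partial\DD\smallsetminus\Lambda$, with $\xi_1,\xi_2\in\Lambda$, and consider the hyperbolic geodesic $\gamma\subset\DD$ joining $\xi_1$ and $\xi_2$. Since $I$ is in the ordinary set and $\Gamma$ acts properly discontinuously there, the stabiliser of $\gamma$ in $\Gamma$ is trivial and no $\Gamma$-translate of $\gamma$ meets $\gamma$, so $\pi|_\gamma$ is injective. The endpoints $\xi_1,\xi_2\in\Lambda$ are of escaping type, so by Proposition \ref{prop-escaping-bounded-bungee} the curve $\pi(\gamma)$ has both of its ends converging to $\widehat{\partial}U$, with radial cluster sets in a single boundary component (using that the Ohtsuka exhaustion lifts consistently to both sides of $\gamma$ and the arc $I$ corresponds to a single side of a single boundary component). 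The half-disk $H\subset\DD$ bounded by $\gamma$ and $I$ is disjoint from its $\Gamma$-translates, so $\pi(H)$ is a simply connected subdomain of $U$ with $\pi(\gamma)$ as its crosscut boundary, yielding a true crosscut.

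The main obstacle I anticipate is in the converse direction: showing rigorously that the two ends of $\pi(\gamma)$ land at points on the \emph{same} boundary component of $U$, and that the image $\pi(H)$ is simply connected and on one side of $\pi(\gamma)$. Both require careful use of the Ohtsuka exhaustion from Section \ref{subsect-FerreiraJove1} and the fact that complementary arcs of $\Lambda$ correspond bijectively (under radial projection) to ``free sides'' of a single boundary component of $U$, which is the geometric content underlying Proposition \ref{prop-geometri} and its parent result in \cite{FerreiraJove}.
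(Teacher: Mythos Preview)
The paper does not prove this proposition; it is quoted from \cite[Corol.~E]{FerreiraJove} and used as a black box, so there is no in-paper proof against which to compare your attempt. The paragraph following the statement merely records the consequence that lifts of a true crosscut neighbourhood $U_C$ give crosscut neighbourhoods in $\DD$ whose boundary arcs fill out $\partial\DD\smallsetminus\Lambda$.

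On the merits of your sketch, two genuine gaps. In the forward direction you claim an annulus admits no true crosscut; this is false. In $\{1<|z|<2\}$ any short Jordan arc with both endpoints on the outer circle cuts off a simply connected lune and leaves a doubly connected remainder, so Definition~\ref{def-true-crosscut} is satisfied, yet $\Lambda$ consists of two points rather than a Cantor set. Either the statement tacitly assumes connectivity at least three, or the intended conclusion is $\Lambda\neq\partial\DD$; in any case your argument does not, and cannot, rule out the annulus. In the converse direction you take the geodesic $\gamma$ joining the endpoints $\xi_1,\xi_2\in\Lambda$ of a complementary arc and invoke Proposition~\ref{prop-escaping-bounded-bungee} by asserting these endpoints are of escaping type. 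Points of $\Lambda$ need not be of escaping type (they can be of bounded or bungee type; this is exactly what \cite[Summary~4.15]{FerreiraJove} records and what the paper uses in the proof of Theorem~\ref{thmB}{\em\ref{thmB-itemc}}). A cleaner route is to place the endpoints of your crosscut in the \emph{interior} of the open arc $I\subset\partial\DD\smallsetminus\Lambda$, where $\pi$ is controlled by the crosscut neighbourhoods described immediately after the proposition, rather than at the limit points $\xi_1,\xi_2$.
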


Note that, if there exists a true crosscut $C\subset U$, then it bounds a simply connected domain, say $U_C$. Then, $ \pi^{-1}(U_C) $ consists of pairwise disjoint (non-degenerate) crosscut neighbourhoods in $\mathbb{D}$ in which $\pi$ acts univalently (compare with \cite[Sect. 5.2]{FerreiraJove}). The set of points for which such crosscut neighbourhoods exist coincides with the set $\partial \mathbb{D}\smallsetminus\Lambda$.  
Note that true crosscuts lift to crosscuts in $\mathbb{D}$ (with the standard definition used in \cite[Chap. 17]{Milnor}, \cite[Chap. 2]{Pom92}). 

\subsection{Singular values}\label{subsection-SV} Let $ U ,V\subset\widehat{\mathbb{C}} $ domains, and $ f\colon U\to V $ holomorphic.
As usual, we say that $ v\in V$ is a {\em regular value} of $ f $  if there exists $ \rho>0 $ such that all inverse branches  of $ f $ are well-defined in $ D(v,\rho) $.
{\em Singular values} of $ f $, denoted by $ SV(f) $, are defined as the set of values in $ V $ which are not regular. 

In the case of inner functions, the following holds (see \cite[Sects. 2.6, 4]{Jov24} for details).

\begin{prop}{\bf (Singular values in $ \partial\mathbb{D} $,	{\normalfont\cite[Prop. 4.2]{Jov24}})}\label{prop-SVinner}
	Let $ g\colon\mathbb{D}\to\mathbb{D} $ be an inner function, and let $ \xi\in\partial\mathbb{D} $. The following are equivalent.
	\begin{enumerate}[label={\em(\alph*)}]
		\item There exists a crosscut $ C $, with crosscut neighbourhood $ N_C $, such that $ \xi\in\partial N_C $ and $ SV(g|_\mathbb{D})\cap N_C=\emptyset $.
		\item $ \xi $ is a regular value of $ g $.
	\end{enumerate}
\end{prop}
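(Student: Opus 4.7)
The equivalence should follow from combining the monodromy theorem (applied over the simply connected region $N_C$) with the Schwarz reflection principle (used to cross the arc $I \coloneqq \partial N_C \cap \partial\mathbb{D}$ where $\xi$ lies). After possibly shrinking $C$, I would assume without loss of generality that $\xi$ is in the relative interior of $I$; this is harmless, since any subneighborhood of $N_C$ still avoids $SV(g|_\mathbb{D})$ and we may place $\xi$ away from the endpoints of the crosscut.

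For the implication (a) $\Rightarrow$ (b), I would observe that since $N_C$ is simply connected and contains no singular values of $g|_\mathbb{D}$, the restriction $g \colon g|_\mathbb{D}^{-1}(N_C) \to N_C$ is a covering map, so each connected component of the preimage is mapped conformally onto $N_C$ by $g$. Consequently every inverse branch of $g|_\mathbb{D}$ extends to a univalent holomorphic map $\phi_j \colon N_C \to \mathbb{D}$, and because $g \circ \phi_j = \mathrm{id}$ while $g$ sends interior points of $\mathbb{D}$ into $\mathbb{D}$, the boundary values of $\phi_j$ along $I$ must lie on $\partial\mathbb{D}$. The Schwarz reflection principle then extends each $\phi_j$ holomorphically across $I$ to the open set $N_C \cup I \cup N_C^{*}$, with $N_C^{*}$ the reflection of $N_C$ across $\partial\mathbb{D}$; since $\xi$ is interior to $I$, this region contains a full disk $D(\xi,\rho)$ for some $\rho > 0$, on which all inverse branches of the reflected $g$ are defined, so $\xi$ is a regular value.

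For (b) $\Rightarrow$ (a), I would pick $\rho > 0$ with all inverse branches of $g$ defined on $D(\xi,\rho)$; then $D(\xi,\rho)\cap\mathbb{D}$ is automatically free of singular values of $g|_\mathbb{D}$, and any small crosscut $C \subset D(\xi,\rho)\cap\mathbb{D}$ with endpoints on $\partial\mathbb{D}\cap D(\xi,\rho)$ and with $\xi$ interior to its arc yields a crosscut neighborhood $N_C \subset D(\xi,\rho)\cap\mathbb{D}$ satisfying (a). The delicate technical step, which I would treat most carefully, lies in the (a) $\Rightarrow$ (b) direction: one needs $I \cap E(g) = \emptyset$ and genuine continuous boundary values for each $\phi_j$ on $I$ before reflection can be applied. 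Both follow from (a) itself, since any essential singularity of $g$ on $I$ would, by its wild cluster-set behavior, force components of $g|_\mathbb{D}^{-1}(N_C)$ to accumulate at that boundary point, contradicting the conformal correspondence of each component with $N_C$ combined with the hypothesis $SV(g|_\mathbb{D}) \cap N_C = \emptyset$.
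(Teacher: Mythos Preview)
The paper does not prove this proposition; it is quoted from \cite[Prop.~4.2]{Jov24} without argument, so there is no in-paper proof to compare against. Your overall strategy---monodromy over the simply connected region $N_C$, then Schwarz reflection across $I$---is the natural one, and your treatment of (b)~$\Rightarrow$~(a) is correct.

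The gap is in your justification of the step you yourself flag as delicate in (a)~$\Rightarrow$~(b). You argue that a singularity of $g$ on $I$ would make components of $g^{-1}(N_C)$ accumulate there, ``contradicting the conformal correspondence of each component with $N_C$''. But there is no contradiction: an inner function may certainly have infinitely many preimage components of $N_C$ accumulating at a boundary singularity, each still mapped conformally onto $N_C$. More fundamentally, the condition $I\cap E(g)=\emptyset$ concerns the behaviour of $g$ along $I$ viewed as part of the \emph{source} circle, whereas what you actually need is continuous extension of each $\phi_j$ to $I$, which is governed by the geometry of $\partial V_j\cap\partial\mathbb{D}$ with $V_j=\phi_j(N_C)$---and these sets $V_j$ need not meet $\partial\mathbb{D}$ anywhere near $I$. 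Even granting $I\cap E(g)=\emptyset$, continuous extension of $\phi_j$ does not follow. A correct argument has to analyse the cluster set $Cl(\phi_j,z_0)$ for $z_0\in I$ directly: one checks it lies in $\partial\mathbb{D}$, and that if it were a nondegenerate arc $J$ then continuity of $g$ on $J\smallsetminus E(g)$ would force $g\equiv z_0$ there, hence $J\subset E(g)$; ruling out this last possibility still requires a further argument that your sketch does not provide.
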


The following lemma exploits the relation between singular values of the map $ f\colon U\to V $ and the ones of the associated inner function.
\begin{lemma}{\bf (Singular values of the lift)}\label{lemma-sing-limit-sets}
	In the Setting \ref{setting}, assume that $U$ and $V$ are multiply connected. Then, the following hold.
	\begin{enumerate}[label={\em(\alph*)}]
		\item\label{lemma-singularities-a} $  SV(f|_{U})= \pi_V (SV(g)\cap\mathbb{D} )$.
		\item\label{lemma-singularities-b} Either $ SV(f|_{U}) = \emptyset$ and $ g $ is  a Möbius transformation, or $ SV(f|_{U}) \neq \emptyset$ and $ g $ is an inner function of infinite degree. 
		\item \label{lemma-singularities-c} If $ SV(f|_{U}) \neq \emptyset$, then $ \Lambda_U \subset E(g)$.
	\end{enumerate}
\end{lemma}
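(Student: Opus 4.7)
The three parts are addressed in turn, all based on the commutative relation $\pi_V\circ g = f\circ\pi_U$ and the equivariance it induces: uniqueness of lifts yields, for every $\tilde\gamma\in\Gamma_U$ (the deck group of $\pi_U$), a unique $\phi(\tilde\gamma)\in\Gamma_V$ with $g\circ\tilde\gamma=\phi(\tilde\gamma)\circ g$, defining a group homomorphism $\phi\colon\Gamma_U\to\Gamma_V$.

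\emph{Part (a).} Since $\pi_U,\pi_V$ are universal coverings, they are local biholomorphisms with no critical points, so differentiating the commutative relation shows that $w\in\mathbb{D}$ is a critical point of $g$ if and only if $\pi_U(w)\in U$ is a critical point of $f|_U$; hence $\pi_V$ sends critical values of $g$ in $\mathbb{D}$ onto critical values of $f|_U$. For asymptotic values, I would lift an $f|_U$-asymptotic curve (landing at $v\in V$) through $\pi_U$ to a curve $\tilde\eta$ in $\mathbb{D}$ with $|\tilde\eta(t)|\to 1$, and then use the commutative diagram, the discreteness of $\pi_V^{-1}(v)$, and a Lehto--Virtanen argument in the spirit of Lemma~\ref{lemma-radial-limits} to conclude that $g(\tilde\eta(t))$ lands at a preimage of $v$ under $\pi_V$, which is therefore an asymptotic value of $g$ in $\mathbb{D}$. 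The converse direction is analogous.

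\emph{Part (b).} If $SV(f|_U)=\emptyset$, then $f|_U$ is an unbranched covering, so $f\circ\pi_U=\pi_V\circ g$ is a covering from the simply connected $\mathbb{D}$ onto $V$, hence a universal covering; by uniqueness of universal covers, $g$ must be a biholomorphism of $\mathbb{D}$, i.e.\ a Möbius transformation. The converse holds by running the argument backwards. Assume now $SV(f|_U)\neq\emptyset$, so by part (a) $SV(g)\cap\mathbb{D}\neq\emptyset$, and it is $\phi(\Gamma_U)$-invariant by the equivariance. The key step is that $\phi(\Gamma_U)$ must be infinite: otherwise, being a finite subgroup of the freely acting Fuchsian group $\Gamma_V$, it would be trivial (every finite isometry group of $\mathbb{D}$ has a fixed point); then $g$ would be $\Gamma_U$-invariant, hence factor as $g=h\circ\pi_U$ for some $h\colon U\to\mathbb{D}$, and since $\pi_U$ has infinite degree (by multiple connectivity of $U$) this would force $g$ to have infinite degree, a contradiction. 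Therefore $\phi(\Gamma_U)$ is infinite, $SV(g)\cap\mathbb{D}$ contains an infinite orbit, and $g$ cannot be a finite Blaschke product (whose critical values are finite in number); hence $g$ has infinite degree.

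\emph{Part (c).} Suppose, for contradiction, that $\xi\in\Lambda_U\smallsetminus E(g)$; then $g$ extends holomorphically to a neighbourhood of $\xi$, with finite local degree, and by Schwarz reflection $g(\xi)\in\partial\mathbb{D}$. I would first show $g(\xi)\in\Lambda_V$: choosing $\tilde\gamma_n\in\Gamma_U$ with $\tilde\gamma_n(0)\to\xi$, the equivariance gives $\phi(\tilde\gamma_n)(g(0))=g(\tilde\gamma_n(0))\to g(\xi)$, exhibiting $g(\xi)$ as a $\Gamma_V$-limit. The same conclusion can be reached independently: if $g(\xi)\notin\Lambda_V$, then $\pi_V$ would extend holomorphically at $g(\xi)$, so $\pi_V\circ g=f\circ\pi_U$ would extend continuously at $\xi$; the cluster set of $\pi_U$ at $\xi$ would then be a connected subset of the discrete fibre $f^{-1}(v^*)$, hence a singleton $\{u^*\}\subset\widehat\partial U$, so $\pi_U$ would be continuous at $\xi$ with value $u^*$, which is incompatible with $\xi\in\Lambda_U$ (deck translates $\tilde\gamma_n(z_0)\to\xi$ satisfy $\pi_U(\tilde\gamma_n(z_0))=w_0\in U$ constant, forcing $w_0=u^*\in\widehat\partial U$). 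The remaining task, with $g(\xi)\in\Lambda_V$, is the hard one: the contradiction must be extracted by combining the finite local degree of $g$ at $\xi$ with the infinite global degree of $g$ (from part (b), using $SV(f|_U)\neq\emptyset$) and with a careful analysis of how the $\phi(\Gamma_U)$-orbits of singular values of $g$ accumulate at $g(\xi)\in\Lambda_V$, interlocking with the cluster-set structure of $\pi_V$ at $g(\xi)$. Making this final step rigorous is the main obstacle of the proof.
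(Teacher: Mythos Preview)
Your approach to parts (a) and (b) is workable, though (b) contains a logical slip: you claim ``$\phi(\Gamma_U)$ must be infinite'' and argue that otherwise $g$ factors through $\pi_U$ and hence has infinite degree, calling this ``a contradiction'' --- but it is not a contradiction with anything you have assumed; it is simply the conclusion you want. The argument is easily repaired as a case split (both cases yield infinite degree), but the paper's route is far more direct: a single critical point $c\in U$ of $f$ lifts to the infinite set $\pi_U^{-1}(c)$ of critical points of $g$, forcing infinite degree immediately; a single asymptotic value likewise lifts to an asymptotic value of $g$, again forcing infinite degree. No homomorphism $\phi$ or case analysis is needed.

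The real gap is in part (c), which you acknowledge is incomplete. Your strategy --- fix $\xi\in\Lambda_U\smallsetminus E(g)$, deduce $g(\xi)\in\Lambda_V$, and then try to extract a contradiction from the local finite degree of $g$ at $\xi$ versus global infinite degree and cluster-set structure --- is working against the grain and leads nowhere obvious. The paper bypasses all of this with a one-line global argument: since $SV(f|_U)\neq\emptyset$, there is either a critical point $c\in U$ of $f$, in which case every point of $\pi_U^{-1}(c)\subset\mathbb{D}$ is a critical point of $g$, or an asymptotic path in $U$ whose lifts land at a $\Gamma_U$-orbit of points in $\partial\mathbb{D}$ that are singularities of $g$. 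In either case one has produced a $\Gamma_U$-orbit contained in $\overline{E(g)}=E(g)$ (accumulation points of critical points, or of boundary singularities, are singularities). But any $\Gamma_U$-orbit accumulates on the whole limit set $\Lambda_U$, and since $E(g)$ is closed, $\Lambda_U\subset E(g)$. The missing idea in your attempt is precisely this: rather than analyse a single point of $\Lambda_U$, exhibit a set of singularities of $g$ whose closure contains $\Lambda_U$.
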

The hypothesis of multiple connectedness of $U$ and $V$ only comes into play in {\em \ref{lemma-singularities-b}}  to claim that $g$ has infinite degree. Indeed, if $U$ and $V$ are simply connected, then $ \pi_U $ and $ \pi_V$ are Riemann maps, so {\em \ref{lemma-singularities-c}} becomes trivial, and {\em \ref{lemma-singularities-a}} can be strengthened to $\pi_V^{-1}(SV(f|_U)) = SV(g)\cap \DD$.

\begin{proof}
	Statement {\em \ref{lemma-singularities-a}} follows immediately from the fact that $ \pi_U $ and $ \pi_V $ are local homeomorphisms. Indeed, let $ z\in V $ and consider a disk $ D(z,\rho) $, for some $ \rho>0 $, small enough so that all inverse branches of $ \pi_V  $ are well-defined in this disk. Then, by the commutative relation $ \pi_V\circ g=f\circ \pi_U $, it is clear that all inverse branches of $ f $ are well-defined in $ D(z,\rho) $ (so $ z $ is a regular value for $ f $) if and only if all inverse branches of $ g $ are well-defined in $ \pi_V^{-1}(D(z,\rho)) $ (note that this last set is a collection of disjoint disks), and thus each point in $ \pi_V^{-1}(z) $ is a regular value for $ g $. This already implies that $ z $ is a singular value for $ f $ if and only if there exists $w\in \pi_V^{-1}(z) $ which is a singular value for $ g $, so $  SV(f|_{U})= \pi_V (SV(g)\cap\mathbb{D} )$, as desired.

	For statement {\em \ref{lemma-singularities-b}}, note that, if $ SV(f|_{U}) = \emptyset$, then $ g $ is an inner function with no singular values, and thus a Möbius transformation. If $ SV(f|_{U}) \neq \emptyset$, then there exists at least one critical or asymptotic value inside $ U $. In the first case, if $ z $ is a critical value (i.e. $ f'(z)=0 $), then it follows from the commutative relation $ \pi_V\circ g=f\circ \pi_U $ that, for every $w\in \pi_V^{-1}(z) $, $ g'(w)=0 $ (note that $\pi'_U(z), \pi'_V(z)\neq 0$ for all $ z\in\mathbb{D} $). Thus, $ g $ has infinitely many critical values, so it has infinite degree. On the other hand, if $ z $ is an asymptotic value, it follows from the fact that $ \pi_U $ and $ \pi_V $ are local homeomorphisms that there is $ w\in \pi_V^{-1}(z) $ which is an asymptotic value for $ g $, and thus $ g $ has infinite degree.
	
	Finally, let us prove statement  {\em \ref{lemma-singularities-c}}. Proceeding as before, if $ SV(f|_{U}) \neq \emptyset$, then there exists either a critical or an asymptotic value in $ V $. In the first case, take $ z\in U $ to be a critical point. Then, every point in $ \pi_U^{-1}(z) $ is a critical point for $ g $. Since they accumulate at $ \Lambda_U $, it is clear that all points in $ \Lambda_U $ are singularities of $ g $. In the second case, one can take an asymptotic path $ \eta\colon \left[ 0,1\right) \to U $ which lands at a singularity $z_0\in E (f )$. By the Correspondence Theorem \cite[Thm. 3.4]{FerreiraJove}, since $ z_0 $ is a point on $ \widehat{\partial} U $ which is accessible from $ U $ through $ \eta $, $ \pi_U^{-1}(\eta(t)) $ lands at countably many points on $ \partial \mathbb{D} $, as $ t\to 1^- $. It is clear that such points are singularities of $ g $, and they accumulate at $ \Lambda_U $, implying that $ \Lambda_U \subset E(g)$, as desired.
\end{proof}

\begin{obs}
	In view of Lemma \ref{lemma-sing-limit-sets}{\em\ref{lemma-singularities-c}}, one might be tempted to think that $ \Lambda_V \subset SV(g)$. However, this is not true, as shows the example considered in Section \ref{subsect-baker}.
\end{obs}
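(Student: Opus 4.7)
The statement is a remark pointing at a counterexample to the tempting guess $\Lambda_V \subset SV(g)$; the counterexample will be Baker's map $f(z) = e^{\alpha(z - 1/z)}$ with $\alpha \in (0, 1/2)$, whose detailed analysis constitutes Section \ref{subsect-baker}. The plan is therefore to describe the ingredients of that analysis that justify the claim, in a form that can later be supplied by a direct reference rather than by further independent argument.

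First, I would set up the example: the Fatou set of this self-map of $\mathbb{C}^*$ consists of a single invariant doubly connected component $U$, the immediate basin of the attracting fixed point $1$, so that $V = U$ and the limit set $\Lambda_V = \Lambda_U$ is a two-point set $\{\xi_1, \xi_2\} \subset \partial \mathbb{D}$ consisting of the fixed points of the hyperbolic Möbius generator of the cyclic deck group of $\pi \colon \mathbb{D} \to U$. Note that, by Lemma \ref{lemma-sing-limit-sets}\ref{lemma-singularities-c}, we already know $\Lambda_V \subset E(g)$, so the contrast with the $SV(g)$-statement is precisely what needs to be exhibited.

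Next, I would appeal to the explicit description developed in Section \ref{subsect-baker}: using the $\mathbb{C}^*$-symmetry $z \mapsto 1/z$ of $f$, one models $U$ conformally as a round annulus and lifts $f|_U$ through the exponential uniformisation, producing a closed-form expression for the associated inner function $g$. From this formula one locates $SV(g)$ explicitly. In particular, one verifies that (at least) one of $\xi_1, \xi_2$ admits a crosscut $C$ of $\mathbb{D}$ with $\xi_i \in \partial N_C$ and $N_C \cap SV(g) = \emptyset$; Proposition \ref{prop-SVinner} then yields $\xi_i \notin SV(g)$, whence $\Lambda_V \not\subset SV(g)$.

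The main obstacle is precisely the explicit computation: one must describe carefully how the (infinitely many) $\pi_V$-preimages of the critical values $e^{\pm 2i\alpha}$ of $f$ distribute in $\mathbb{D}$ and check that their accumulation on $\partial \mathbb{D}$, which a priori occurs at the two limit points by the deck-group action, is nevertheless sufficiently well-structured (controlled by the annular model and the exponential uniformisation) for a separating crosscut to be drawn. This is exactly the content of Section \ref{subsect-baker}; once it is in hand, the remark follows by direct inspection.
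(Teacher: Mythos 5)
Your overall plan is the right one and is essentially the paper's: take Baker's map $f(z)=e^{\alpha z-\alpha/z}$, compute the associated inner function explicitly, and show that the two limit points are not singular values by exhibiting a crosscut neighbourhood avoiding $SV(g)$ and invoking Proposition \ref{prop-SVinner}. However, your final paragraph contains a genuine gap that, taken at face value, would make the argument fail. You describe the relevant obstruction as ``the (infinitely many) $\pi_V$-preimages of the critical values $e^{\pm 2i\alpha}$'' and propose to show that their accumulation at the two limit points is ``sufficiently well-structured for a separating crosscut to be drawn.'' This cannot work: if the singular values of $g$ in $\mathbb{D}$ really accumulated at $\xi_i$, then \emph{every} crosscut neighbourhood $N_C$ with $\xi_i\in\partial N_C$ would contain infinitely many of them, and Proposition \ref{prop-SVinner} would force $\xi_i\in SV(g)$ — i.e. the remark would be false. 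The point you are missing is that $SV(g)\cap\mathbb{D}$ is \emph{not} the full preimage $\pi_V^{-1}(SV(f|_U))$: Lemma \ref{lemma-sing-limit-sets}\ref{lemma-singularities-a} only asserts $\pi_V(SV(g)\cap\mathbb{D})=SV(f|_U)$ (the paper explicitly notes that equality of preimages holds only in the simply connected case). In this example the critical \emph{points} of $g$ do accumulate at $\Lambda$ (which is why $\Lambda\subset E(g)$), but their critical \emph{values} collapse.

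The paper's route makes this concrete without any crosscut geometry: $f$ is semiconjugate via $z\mapsto e^{iz}$ to $F(z)=2\alpha\sin z$, whose Fatou set is a single simply connected basin $V$, and the universal covering of $U$ factors as $\pi=e^{i\varphi}$ with $\varphi\colon\mathbb{D}\to V$ the Riemann map. Hence the associated inner function is the Blaschke product $B$ of $F|_V$, and by the simply connected theory $SV(B)\cap\mathbb{D}=\varphi^{-1}(SV(F|_V))=\varphi^{-1}(\{\pm 2\alpha\})$, a two-point set compactly contained in $\mathbb{D}$. So $SV(B)$ meets $\partial\mathbb{D}$ nowhere, while $\Lambda=\{1,-1\}=E(B)$, which is exactly the claimed counterexample. (A small additional inaccuracy: the paper does not model $U$ as a round annulus; the uniformisation goes through the simply connected basin of the sine map, which is what makes the computation of $SV(B)$ a two-point set immediate. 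Also, $e^{\pm 2i\alpha}$ are the critical values of $f$ and $\pm i$ its critical points.)
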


\subsection{Mapping properties of the associated inner function}\label{subsect-mapping-properties}
We can also obtain information on how the boundary extension $g^*$ interacts with the limit sets. In the following lemma, we use the symbol $ \Subset $ to denote `compactly contained'. Recall that $ \Omega=\Omega(f)=\widehat{\mathbb{C}}\smallsetminus E(f)  $ is the largest set where $ f $ is meromorphic and $ E(f) $ is the set of singularities of $ f $.

\begin{lemma}{\bf (Lifts and boundary behaviour)}\label{lemma-mapping-properties}
	In the Setting \ref{setting}, the following holds.
	\begin{enumerate}[label={\normalfont(\alph*)}]
		\item {\em (Bounded type)}\label{lemma-mapping-properties-a}  If $ \xi  \in \partial \mathbb{D}$ is of bounded type for $ \pi_U $ and $ g^*(\xi) $ exists, then  $ g^*(\xi) $ is of bounded type for $ \pi_V $.
		
		\item  {\em (Escaping type I)} \label{lemma-mapping-properties-b}  If $ \xi  \in \partial \mathbb{D}$ is of escaping type for $ \pi_U $,  $g^*(\xi)$ exists and 
		\[Cl_\mathcal{A}(\pi_U, \xi)\Subset \Omega,\] then $ g^*(\xi) $  is of escaping type for $ \pi_V $.

				\item  {\em (Escaping type II)} \label{lemma-mapping-properties-c}  If $ \xi  \in \partial \mathbb{D}$ is of escaping type for $ \pi_U $, and there exists a component $ C $ of $ \partial U $ such that 
		\[Cl_\mathcal{A}(\pi_U, \xi)\subset C\Subset \Omega,\] then $ g^*(\xi) $ exists and it is of escaping type for $ \pi_V $. Moreover, $ \xi \in \Lambda_U$  if and only if  $ g^*(\xi) \in \Lambda_V$.
		
		\item  {\em (Bungee type)}\label{lemma-mapping-properties-d}  If $ \xi  \in \partial \mathbb{D}$ is of bungee type for $ \pi_U $, and there exists a component $ C $ of $ \partial U $ such that 
		\[Cl_\mathcal{A}(\pi_U, \xi)\cap C\neq\emptyset, \hspace{0.5cm} C\Subset \Omega;\] then $ g^*(\xi) $ exists and it is of bungee type for $ \pi_V $. 
	\end{enumerate}
\end{lemma}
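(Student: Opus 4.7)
My plan is to treat the four cases in parallel, using the commutative relation $\pi_V \circ g = f \circ \pi_U$ and the (converse) characterizations in Proposition \ref{prop-escaping-bounded-bungee}, and invoking the Correspondence Theorem of \cite{FerreiraJove} when needed to establish landing of lifted curves.

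Cases \ref{lemma-mapping-properties-a} and \ref{lemma-mapping-properties-b} should follow almost immediately from Proposition \ref{prop-escaping-bounded-bungee} applied to the curve $g(R_\xi)$, which lands at $g^*(\xi)$ by hypothesis. For \ref{lemma-mapping-properties-a}, the set $\overline{\pi_U(R_\xi)}$ is a compact subset of $U \subset \Omega$, so $\pi_V(g(R_\xi)) = f(\pi_U(R_\xi))$ is compactly contained in $V$, and the converse direction for bounded type concludes. For \ref{lemma-mapping-properties-b}, every accumulation point of $\pi_V(g(R_\xi(t)))$ as $t \to 1^-$ has the form $f(z)$ with $z$ in the compact set $Cl_R(\pi_U,\xi) = Cl_\mathcal{A}(\pi_U,\xi) \Subset \Omega$; continuity of $f$ there, together with total invariance of the Fatou set, forces $f(z) \in \widehat{\partial} V$. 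Thus $\pi_V(g(R_\xi(t))) \to \widehat{\partial} V$ and the converse direction for escaping type applies.

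For case \ref{lemma-mapping-properties-c}, existence of $g^*(\xi)$ is no longer assumed and becomes the main task. First I would rule out interior accumulation of $g(R_\xi)$: an accumulation point $w \in \mathbb{D}$ along a sequence $t_n \to 1^-$ would, after passing to a subsequence, yield $\pi_U(R_\xi(t_n)) \to z \in C \subset \widehat{\partial} U \cap \Omega$, and continuity would give $\pi_V(w) = f(z) \in \widehat{\partial} V$, contradicting $\pi_V(w) \in V$. Hence the accumulation set $S$ of $g(R_\xi)$ lies in $\partial\mathbb{D}$. Next, I would show that $S$ is a single point: $f(C)$ is a connected compact subset of $\widehat{\partial} V$ contained in a single boundary component $C'$ of $V$, and persistent oscillation of $g(R_\xi)$ between two distinct points of $\partial\mathbb{D}$ would force $\pi_V(g(R_\xi))$ to cross a separating curve in $V$ infinitely often, in a way incompatible with the coherent lifting of accesses to $C'$ provided by the Correspondence Theorem of \cite{FerreiraJove}. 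Once $\zeta \coloneqq g^*(\xi)$ exists, escaping type for $\zeta$ follows just as in case \ref{lemma-mapping-properties-b}. For the limit set equivalence $\xi \in \Lambda_U \Leftrightarrow \zeta \in \Lambda_V$, I would use the characterization of $\Lambda$ by true crosscuts (Proposition \ref{prop-geometri} and the preceding discussion): crosscut neighborhoods of $\xi$ corresponding to true crosscuts of $U$ landing on $C$ are mapped by $g$ into crosscut neighborhoods of $\zeta$ corresponding to true crosscuts of $V$ landing on $C'$, with the reverse correspondence obtained via local inverse branches of $f$ near $f(C) \Subset \Omega$.

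Case \ref{lemma-mapping-properties-d} proceeds along the same lines, but now $\pi_U(R_\xi)$ has accumulation points both inside $U$ and on $C$. The interior accumulation no longer forces a contradiction; instead, after passing through $g$ and $\pi_V$, it corresponds to interior cluster values of $\pi_V$ along the curve $g(R_\xi)$, which is compatible with a bungee-type landing point. Uniqueness of the landing point $\zeta$ on $\partial\mathbb{D}$ follows again by the Correspondence Theorem applied to the component of $\widehat{\partial} V$ containing $f(Cl_\mathcal{A}(\pi_U,\xi) \cap C)$. Once $g^*(\xi) = \zeta$ is established, the bungee classification is immediate from Proposition \ref{prop-escaping-bounded-bungee}: the curve $g(R_\xi)$ lands at $\zeta$, with $\pi_V(g(R_\xi))$ neither compactly contained in $V$ nor converging to $\widehat{\partial} V$, so the converses for bounded and escaping types rule those out. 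The principal obstacle throughout \ref{lemma-mapping-properties-c} and \ref{lemma-mapping-properties-d} is showing that $g(R_\xi)$ lands at a single point of $\partial\mathbb{D}$, as the Lehto-Virtanen theorem does not apply directly when $\pi_V(g(R_\xi))$ does not converge; the compactness of $C$ in $\Omega$ and the rigidity of accesses to a single boundary component of $V$ are what make this feasible.
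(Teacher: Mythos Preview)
Your treatment of \ref{lemma-mapping-properties-a} and \ref{lemma-mapping-properties-b} is correct and essentially identical to the paper's.

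The gap is in \ref{lemma-mapping-properties-c} and \ref{lemma-mapping-properties-d}, where you must \emph{prove} that $g(R_\xi)$ lands. Your main tool, the Correspondence Theorem of \cite{FerreiraJove}, concerns curves that land at an accessible \emph{point} of $\widehat{\partial}V$; here $\pi_V(g(R_\xi))=f(\pi_U(R_\xi))$ need only accumulate on a continuum in $f(C)$, so the theorem does not apply. Your fallback argument---that oscillation of $g(R_\xi)$ forces $\pi_V(g(R_\xi))$ to cross a separating curve $\gamma\subset V$ infinitely often---is also incomplete: oscillation in $\partial\mathbb{D}$ only forces $g(R_\xi)$ to cross infinitely many \emph{distinct} lifts of $\gamma$, each possibly once, which is perfectly compatible with $\pi_V(g(R_\xi))\to C'$. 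To turn this into a proof one needs control over which lifts are crossed and how they are nested, and that is exactly the Ohtsuka-exhaustion machinery the paper invokes. Concretely, the paper takes non-homotopic curves $\{\sigma_n\}$ isolating $C$, uses the argument principle (available since $C\Subset\Omega$) to see that $\{f(\sigma_n)\}$ are non-homotopic curves isolating $f(C)$, and then argues on the nested crosscuts $\{C_n\}$ at $\xi$ and $\{g(C_n)\}$: if $\xi\notin\Lambda_U$ one finds a crosscut neighbourhood on which $g$ is conformal and landing is immediate; if $\xi\in\Lambda_U$ the non-homotopy forces the $g(C_n)$ to shrink to a point.

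For \ref{lemma-mapping-properties-d} the difficulty is sharper. Since $\xi$ is bungee, $\pi_V(g(R_\xi))$ does \emph{not} converge to $\widehat{\partial}V$ at all---it has interior accumulation---so there is no access to which any correspondence result could be applied, and the non-homotopy argument above also breaks down (the relevant crosscuts come from a fixed $\sigma_{n_0}$ infinitely often). The paper's mechanism here is different and rather elegant: the nested crosscuts $\{g(C_m)\}$ delimit a circular arc $I\subset\partial\mathbb{D}$, and the oscillating pattern of $\{\pi_V(g(C_m))\}$ forces \emph{every} point of $I$ to be of bungee type for $\pi_V$; but bungee-type points have zero $\lambda$-measure (the radial limit fails there), so $I$ must be a singleton. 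This measure-theoretic step is the missing idea in your outline.
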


\begin{proof}
	\begin{enumerate}[label={\normalfont(\alph*)}]
	\item   Let $ \xi\in\partial\mathbb{D} $ be of bounded type, and consider\[R_\xi(t)= \left\lbrace t\xi \colon t\in \left[ 0,1\right)  \right\rbrace  .\]
	By assumption $ \left\lbrace \pi_U(R_\xi(t))\colon t\in \left[ 0,1\right) \right\rbrace  $ is compactly contained in $ U $. Then, $$ g(R_\xi)\colon\left[ 0,1\right)\longrightarrow\mathbb{D} $$ is a curve landing at $ g^*(\xi)$,  and 
	\[\pi_V(g(R_\xi))=f(\pi_U(R_\xi))\colon\left[ 0,1\right)\longrightarrow V \] is compactly contained in $ V $ (since the image of a compact set is a compact set). By Proposition \ref{prop-escaping-bounded-bungee}, $ g^*(\xi) $ is of bounded type for $ \pi_V $.
	\item The proof in the escaping case is similar to the previous one. Indeed, let $\xi\in\partial\mathbb{D}$ be of escaping type, so $ R_\xi (t)\to \partial U \subset \Omega$ as $ t\to 1 ^-$. Then, $g(R_\xi)$ is a curve landing at $ g^*(\xi)$,  and 
	\[\pi_V(g(R_\xi))=f(\pi_U(R_\xi))\colon\left[ 0,1\right)\longrightarrow \widehat{\partial} V .\]
By Proposition \ref{prop-escaping-bounded-bungee}, $ g^*(\xi) $ is of escaping type for $ \pi_V $.
		\item Take a sequence of non-contractible simple curves $ \left\lbrace \sigma_n\right\rbrace _n $ isolating $ C $ from the other components of $ \widehat{\partial} U $ ($ \left\lbrace \sigma_n\right\rbrace _n $ can be thought as the boundary components of the elements of an Ohtsuka exhaustion). 
		
		Since $ C\Subset \Omega $, we assume without loss of generality that $ f $ is holomorphic in  the component of $ \widehat{\mathbb{C}}\smallsetminus \sigma_0 $ containing $C$. 
		By the homological version of the argument principle (see e.g. \cite[Thm. 18]{Ahl79}), $ \left\lbrace f(\sigma_n)\right\rbrace _n $ is a sequence of non-contractible curves (possibly non-simple) isolating $ f(C) $ from the other boundary components of $ \partial V $. Therefore, if $ \eta $ is a curve landing at $ \xi $ such that $ \pi_U(\eta) $ crosses all the curves $ \left\lbrace \sigma_n\right\rbrace _n $ to accumulate on $ C $, then $ f(\pi_U(\eta)) $ crosses all the curves $ \left\lbrace f(\sigma_n)\right\rbrace _n $ to accumulate on $ f(C) $.
	
		Thus, if $ \left\lbrace C_n\right\rbrace _n $ denotes all the crosscuts at $ \xi $ obtained by lifting the curves $ \left\lbrace \sigma_n\right\rbrace _n $, $ g(\eta) $ crosses all the crosscuts $ \left\lbrace g(C_n)\right\rbrace _n $ accumulating on $ \partial \mathbb{D} $. We have to see that $ g(\eta) $ actually lands at a point, say $ \zeta\in\partial\mathbb{D} $. Then, by the Lehto-Virtanen Theorem \cite[Sect. 4.1]{Pom92}, we will have $ g^*(\xi)=\zeta$ (see Figure \ref{fig-lemma3.9c}).
		
		\begin{figure}[h]\centering
			\includegraphics[width=12cm]{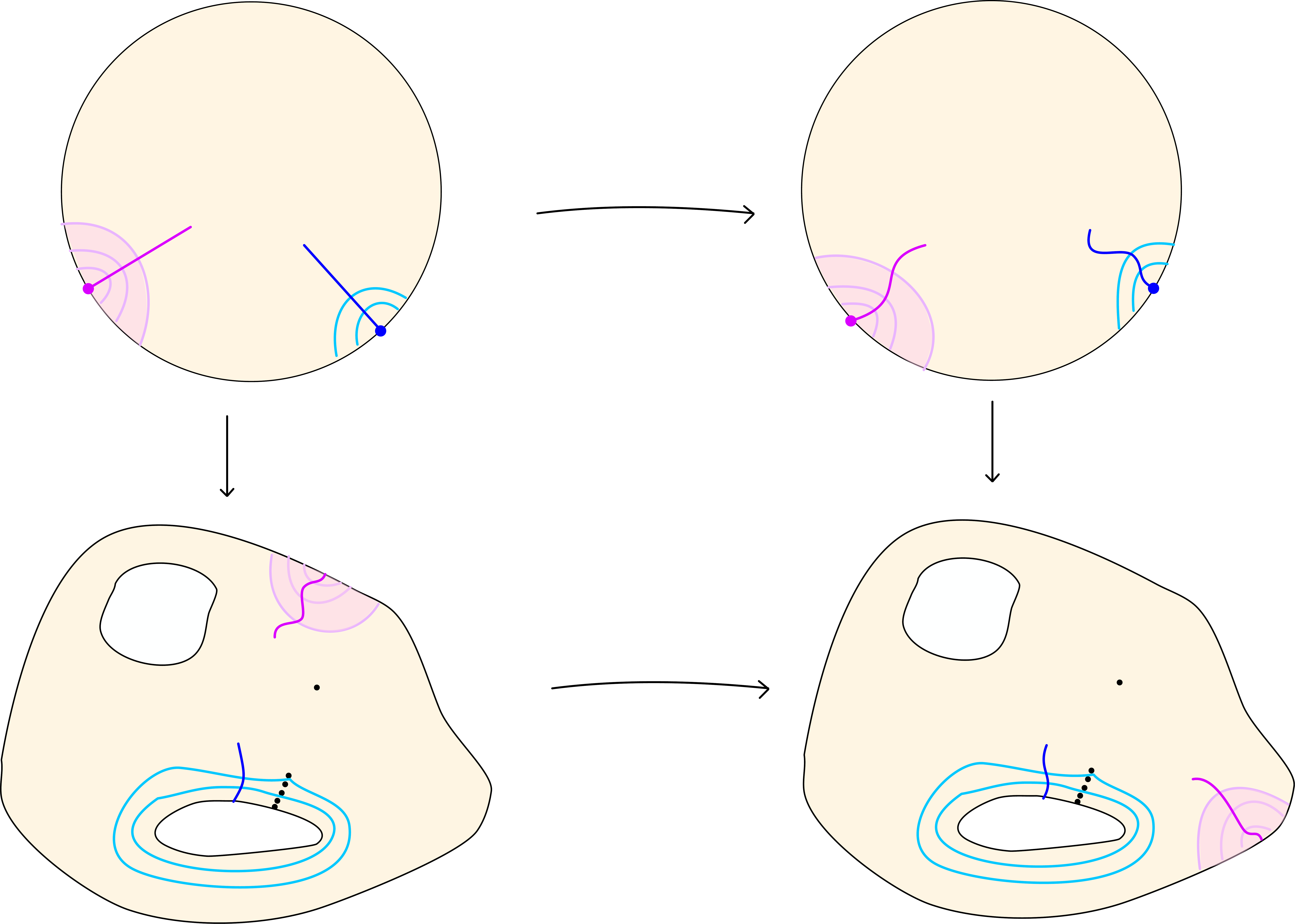}
			\setlength{\unitlength}{12cm}
			\put(-0.87, 0.36){$ \pi_U $}
				\put(-0.23, 0.36){$ \pi_V $}
							\put(-0.98, 0.26){$U $}
				\put(-0.1, 0.26){$ V $}
					\put(-0.93, 0.67){$\DD $}
				\put(-0.12, 0.67){$ \DD$}
					\put(-0.5, 0.57){$g $}
				\put(-0.5, 0.2){$ f$}
			\caption{\footnotesize Sketch of the construction in Lemma \ref{lemma-mapping-properties}\ref{lemma-mapping-properties-c}.}\label{fig-lemma3.9c}
		\end{figure}
		
		We proceed differently according to if $ \xi\in\partial\mathbb{D}\smallsetminus\Lambda_U $ or if $ \xi\in\Lambda_U $. In the first case,  we can find a crosscut neighbourhood $ N $ around $ \xi $ such that $ \pi_U(N) $ is simply connected and $ f|_{\pi_U(N)} $ is conformal (note that critical points are discrete in $ \Omega $, and $ C\Subset \Omega $). This implies that $ g|_N $ is conformal and thus $ \xi $ is not a singularity \cite[Thm. II.6.6]{garnett}, so $ g(\xi) $ is well-defined. By the commutative relation $ f\circ\pi_U=\pi_V\circ g $, $ g(N) $ is a crosscut neighbourhood of $ g(\xi) $ whose image under $ \pi_V $ is simply connected. Hence, $ g(\xi)\in\partial\mathbb{D}\smallsetminus\Lambda_V $.
		
		In the second case, if $ \xi\in\Lambda_U $, the curves $ \left\lbrace \sigma_n\right\rbrace _n $ are non-homotopic to each other, and thus the crosscuts $ \left\lbrace C_n\right\rbrace _n $ shrink to the point $ \xi $. By continuity, 
		the curves $ \left\lbrace f(\sigma_n)\right\rbrace _n $ are non-homotopic to each other, and thus the crosscuts $ \left\lbrace g(C_n)\right\rbrace _n $ shrink to the point $ \zeta $, with $ g^*(\xi)=\zeta  $.
		
		Note that the previous argument shows that $ \xi \in \Lambda_U$  if and only if  $ g^*(\xi) \in \Lambda_V$. 
		\item For the bungee type case, proceed as in {\em\ref{lemma-mapping-properties-c}}: take a sequence of non-contractible simple curves $ \left\lbrace \sigma_n\right\rbrace _n $ isolating $ C $ from the other components of $ \partial U $, and consider the sequence of crosscuts $ \left\lbrace C_m\right\rbrace _m $ arising from $ \pi_U^{-1}(\left\lbrace \sigma_n\right\rbrace _n) $ at $ \xi $. Since $ \xi $ is of bungee type, there exists  a subsequence $  \left\lbrace C_{m_j}\right\rbrace _j  $ such that $ \pi_U(C_{m_j})=\sigma_j$ and $ j\to\infty $, and another subsequence $  \left\lbrace C_{m_k}\right\rbrace _k  $ such that $ \pi_U(C_{m_k})= \sigma_{n_0} $ for some fixed $ n_0 $. By continuity (as in {\em\ref{lemma-mapping-properties-c}}), the same property holds for the crosscuts $ \left\lbrace g(C_n)\right\rbrace _n $. Therefore, any point lying in the circular interval delimited by $ \left\lbrace g(C_m)\right\rbrace _m $ is of bungee type. Since points of bungee type have zero $ \lambda $-measure (since $ \pi_V^* $ does not exist for such points), the circular interval delimited by $ \left\lbrace g(C_m)\right\rbrace _m $ is in fact degenerate and consists of a single point $ \zeta\in\partial\mathbb{D} $. Note that $ g(R_\xi) $ lands at $ \zeta $.
	\end{enumerate}
\end{proof}

In the case when  the singular values $SV(f|_U)$ are compactly contained within $U$, or $ f\colon U\to V $ is a proper map, the previous statement can be improved as follows. 

\begin{lemma}{\bf (Mapping properties for regular points)}\label{lemma-lifts-and-limit-sets-SV}
	In the Setting \ref{setting}, assume $SV(f|_U)$ is compactly contained in $U$. Let $\xi\in \partial \mathbb{D}\smallsetminus \Lambda_V$. 
	Then, for every $\zeta\in\partial\mathbb{D}$ such that $ g^*(\zeta)=\xi $, it holds that $ \zeta \notin E(g) $ and $ \zeta\in \partial \mathbb{D}\smallsetminus \Lambda_U$.
\end{lemma}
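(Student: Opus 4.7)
The plan is to first upgrade the hypothesis $\xi\in\partial\mathbb{D}\smallsetminus\Lambda_V$ to the statement that $\xi$ is a regular value of $g$ in the sense of Section \ref{subsection-SV}, and then to use a local inverse branch of $g$ at $\xi$ both to identify $\zeta$ with the image of $\xi$ under this inverse branch (yielding $\zeta\notin E(g)$) and to transport a crosscut-univalence neighborhood from $\xi$ to $\zeta$ (yielding $\zeta\notin \Lambda_U$).

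In the first step I would use the characterization of $\Lambda$ recorded after Proposition \ref{prop-geometri}: since $\xi\notin \Lambda_V$, there is a crosscut neighborhood $N_V$ of $\xi$ on which $\pi_V$ is univalent, and whose image $W\coloneqq \pi_V(N_V)$ is a simply connected domain in $V$ bounded by a true crosscut of $V$. Because $SV(f|_U)\Subset U$ while $W$ accumulates at $\pi_V^*(\xi)\in\widehat{\partial} V$ as $N_V$ shrinks about $\xi$, I may shrink $N_V$ further so that $\overline{W}\cap SV(f|_U)=\emptyset$. Lemma \ref{lemma-sing-limit-sets}{\em\ref{lemma-singularities-a}} combined with the covering property of $\pi_V$ then gives $SV(g)\cap N_V=\emptyset$, and Proposition \ref{prop-SVinner} concludes that $\xi$ is a regular value of $g$; in particular, all inverse branches of $g$ extend to some disk $D(\xi,\rho)\supset \overline{N_V}$ in $\widehat{\mathbb{C}}$.

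In the second step I would fix $t_0<1$ large enough (possible since $g^*(\zeta)=\xi$) so that $g(t\zeta)\in D(\xi,\rho)$ for every $t\in[t_0,1)$, and let $G_0$ be the inverse branch of $g$ on $D(\xi,\rho)$ normalised by $G_0(g(t_0\zeta))=t_0\zeta$. A monodromy argument along the connected curve $R_\zeta|_{[t_0,1)}$ (whose image under $g$ never leaves $D(\xi,\rho)$) yields $G_0(g(t\zeta))=t\zeta$ for all $t\in[t_0,1)$; letting $t\to 1^-$ and invoking continuity of $G_0$ at the interior point $\xi\in D(\xi,\rho)$ gives $G_0(\xi)=\zeta$. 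Since $G_0$ takes values in $\widehat{\mathbb{C}}\smallsetminus E(g)$, this already shows $\zeta\notin E(g)$. To conclude $\zeta\notin \Lambda_U$, I would set $N_\zeta\coloneqq G_0(N_V)$: by construction this is a crosscut neighborhood of $\zeta$ on which $g$ is a homeomorphism onto $N_V$. Factoring the identity $\pi_V\circ g=f\circ\pi_U$ and using that $W$ is simply connected and avoids $SV(f|_U)$, so that the component $\tilde W\subset U$ of $f^{-1}(W)$ containing $\pi_U(N_\zeta)$ maps homeomorphically onto $W$ under $f$, I would read off that $\pi_U|_{N_\zeta}$ is univalent; hence $\zeta\in\partial\mathbb{D}\smallsetminus\Lambda_U$ by the same crosscut characterization as above.

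The only delicate point in the execution is the shrinking argument at the very beginning: producing $N_V$ small enough that both $\pi_V|_{N_V}$ is univalent and $\overline{W}\cap SV(f|_U)=\emptyset$. The first condition is automatic from $\xi\notin \Lambda_V$, while the second is where the hypothesis $SV(f|_U)\Subset U$ is essential, through the fact that $W$ is forced to shrink toward the boundary point $\pi_V^*(\xi)\in\widehat{\partial} V$. Once this is in place, the remainder is essentially formal bookkeeping with the inverse branches of $g$ and the commutative diagram of Setting \ref{setting}.
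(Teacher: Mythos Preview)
Your proposal is correct and follows essentially the same route as the paper's proof: choose a crosscut neighbourhood $N_V$ of $\xi$ on which $\pi_V$ is univalent and whose image avoids $SV(f|_U)$, invoke Lemma~\ref{lemma-sing-limit-sets} and Proposition~\ref{prop-SVinner} to make $\xi$ a regular value of $g$, and then transport $N_V$ back through an inverse branch of $g$ to obtain a crosscut neighbourhood of $\zeta$ on which $\pi_U$ is univalent. The paper is terser at the one point where you are careful---it simply asserts the existence of an inverse branch $G_1$ with $G_1(N_V)$ a crosscut neighbourhood of $\zeta$, whereas your monodromy argument along $R_\zeta|_{[t_0,1)}$ explicitly singles out which branch this is; both are valid, and your version makes the identification $G_0(\xi)=\zeta$ transparent.
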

\begin{proof}
%
%
		
			Let $\xi\in \partial \mathbb{D}\smallsetminus \Lambda_V$. Since $SV(f|_U)\Subset U$, one can find a crosscut neighbourhood $ N $ of $ \xi $ such that $ \pi_V(N) $ is simply connected and $ \pi_V(N) \cap SV(f|_U)=\emptyset $. By Lemma \ref{lemma-sing-limit-sets}, there are no singular values of $ g $ in $N$. Applying Proposition \ref{prop-SVinner}, we get that there is an inverse branch of $g$, say $G_1$, such that $ G_1 (N)$ is a crosscut neighbourhood of $ \zeta $. Thus, $ g(G_1(N))=N $ and $\zeta$ is not a singularity \cite{garnett}. By the commutative relation $ \pi_V\circ g=f\circ \pi_U $, $\pi_U(G_1(N))$ is simply connected, implying that $\zeta\in\partial\mathbb{D}\smallsetminus\Lambda_U$, as desired.
\end{proof}

\begin{lemma}{\bf (Lifts and boundary behaviour for proper maps)}\label{lemma-lifts-and-limit-sets-proper}
	In the Setting \ref{setting}, assume $ f\colon U\to V $ is proper. Then, the following holds.
	\begin{enumerate}[label={\normalfont(\alph*)}]
		\item{\em (Escaping type)} If $ \xi  \in \partial \mathbb{D}$ is of escaping type for $ \pi_U $, then $ g^*(\xi) $ exists and it is of escaping type for $ \pi_V $. Moreover, $ \xi \in \Lambda_U$  if and only if  $ g^*(\xi) \in \Lambda_V$.
		\item{\em (Bungee type)} If $ \xi  \in \partial \mathbb{D}$ is of bungee type for $ \pi_U $, then $ g^*(\xi) $ exists and it is of bungee type for $ \pi_V $.
		\item{\em (Bounded type)} If $\xi\in \partial\mathbb{D} $ is such that $ g^*$ is not well-defined at $ \xi $, then $ \xi  $ is of bounded type for $ \pi_U$.
	\end{enumerate}
\end{lemma}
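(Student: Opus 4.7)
The plan is to adapt the proofs of Lemma~\ref{lemma-mapping-properties} parts~(c) and~(d) to the proper-map setting, substituting the compactness hypothesis $Cl_\mathcal{A}(\pi_U,\xi)\subset C\Subset\Omega$ by the following consequence of properness: if $\{\sigma_n\}\subset U$ is a sequence of simple closed curves escaping every compact subset of $U$ (e.g.\ converging to a single boundary component $C$), then $\{f(\sigma_n)\}\subset V$ escapes every compact subset of $V$. This is the only genuinely new ingredient; the topological and homological steps of Lemma~\ref{lemma-mapping-properties} then transfer almost verbatim.

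For (a), I first show $g(R_\xi(t))\to\partial\mathbb{D}$ as $t\to 1^-$: the escaping hypothesis gives $\pi_U(R_\xi(t))\to\widehat\partial U$; properness yields $\pi_V(g(R_\xi(t)))=f(\pi_U(R_\xi(t)))\to\widehat\partial V$; and since $\pi_V$ is a covering map, any subsequential limit of $g(R_\xi(t))$ inside $\mathbb{D}$ would descend to an interior accumulation point of $\pi_V(g(R_\xi(t)))$ in $V$, a contradiction. Next, by Proposition~\ref{prop-escaping-bounded-bungee} there is a unique boundary component $C\subset\widehat\partial U$ with $Cl_R(\pi_U,\xi)\subset C$; I choose non-contractible simple closed curves $\{\sigma_n\}\subset U$ isolating $C$ and converging to it, and let $\{C_n\}\subset\mathbb{D}$ be the corresponding crosscuts at $\xi$ obtained from $\pi_U^{-1}(\sigma_n)$. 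Applying the argument principle on the compact annular regions in $U$ bounded by consecutive $\sigma_n$'s, together with properness, shows that the images $f(\sigma_n)$ are non-contractible curves isolating a single boundary component $C'$ of $V$. Consequently the lifts $g(C_n)$ are crosscuts in $\mathbb{D}$ shrinking to a single point $\zeta\in\partial\mathbb{D}$; since $R_\xi$ eventually crosses every $C_n$, the curve $g(R_\xi)$ crosses every $g(C_n)$ and lands at $\zeta$, so by Lehto-Virtanen $g^*(\xi)=\zeta$. That $\zeta$ is of escaping type for $\pi_V$ follows from the converse part of Proposition~\ref{prop-escaping-bounded-bungee} applied to $g(R_\xi)$, and the equivalence $\xi\in\Lambda_U\iff\zeta\in\Lambda_V$ is obtained exactly as in Lemma~\ref{lemma-mapping-properties}(c): it reduces to whether the $\sigma_n$ (and hence, via properness, the $f(\sigma_n)$ and the $g(C_n)$) are mutually non-homotopic.

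For (b), I follow Lemma~\ref{lemma-mapping-properties}(d) essentially verbatim, using properness to transport the oscillatory structure from $\pi_U(R_\xi)$ to $g(R_\xi)$. Since $\xi$ is of bungee type, among the crosscuts $\{C_m\}$ obtained by lifting $\{\sigma_n\}$ one finds both a subsequence with $\pi_U(C_{m_j})=\sigma_j$ and $j\to\infty$, and a subsequence with $\pi_U(C_{m_k})=\sigma_{n_0}$ for some fixed $n_0$; properness ensures the same dichotomy persists for $\{g(C_m)\}$, so the circular interval they delimit on $\partial\mathbb{D}$ consists of points of bungee type for $\pi_V$. Since the bungee set has zero $\lambda$-measure (because $\pi_V^*$ is undefined on it), this interval must degenerate to a single point $\zeta=g^*(\xi)$, which inherits the bungee property.

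Part~(c) is then immediate: if $g^*(\xi)$ were undefined, parts (a) and (b) rule out $\xi$ being of escaping or bungee type, forcing it to be of bounded type. The main obstacle I foresee is the argument-principle step of part (a): unlike in Lemma~\ref{lemma-mapping-properties}(c), we cannot extend $f$ holomorphically across $C$, so the isolation of a boundary component $C'$ of $V$ by $\{f(\sigma_n)\}$ has to be derived intrinsically from inside $U$ via properness, most cleanly through the end-compactification (Kerékjártó-Stoïlow) description of proper holomorphic maps between open surfaces.
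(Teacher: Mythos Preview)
Your approach differs from the paper's in a key structural way, and the difference matters because the obstacle you flag at the end is exactly where your argument currently breaks. You push curves forward from $U$ to $V$: take $\sigma_n$ isolating a boundary component $C$ of $U$, and try to show that the $f(\sigma_n)$ isolate a single boundary component of $V$ and that the crosscuts $g(C_n)$ shrink. The sentence ``Consequently the lifts $g(C_n)$ are crosscuts shrinking to a single point $\zeta$'' is unjustified as written: even granting that the $f(\sigma_n)$ are non-contractible and converge to $\widehat\partial V$ (which properness does give), you still need them to be pairwise non-homotopic in $V$ when $\xi\in\Lambda_U$, and this is precisely the end-compactification fact you identify as an obstacle but do not prove. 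You also suppress the case split $\xi\in\Lambda_U$ versus $\xi\notin\Lambda_U$ that the argument of Lemma~\ref{lemma-mapping-properties}(c) actually requires.

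The paper sidesteps all of this by reversing the direction: instead of pushing forward from $U$, it starts with an Ohtsuka exhaustion $\{V_n\}$ of $V$ and pulls it back via $U_n=f^{-1}(V_n)\cap U$, which is again an Ohtsuka exhaustion precisely because $f$ is proper. Compatibility of the two exhaustions is then automatic, with no appeal to the argument principle or to ends. The landing of $g(\eta)$ at a single point is argued by contradiction using a \emph{different} consequence of properness: a proper holomorphic map has only finitely many singular values (all critical), so if $g(\eta)$ accumulated on a nondegenerate arc $I\subset\partial\mathbb{D}\smallsetminus\Lambda_V$, one could choose a crosscut neighbourhood of $I$ free of singular values and invoke Proposition~\ref{prop-SVinner} to obtain conformal inverse branches of $g$ there, contradicting that $\eta$ itself lands at a point. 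The $\Lambda$-equivalence in (a) is likewise handled via local crosscut-neighbourhood arguments rather than by tracking homotopy classes of the $\sigma_n$. Your part~(b) and the paper's are essentially the same (both use the Ohtsuka/crosscut structure and reduce to the measure-zero argument for bungee points), and (c) is immediate either way.
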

\begin{proof}
	We start with the following technical claim.
	
	\begin{claim*}
		There  is  an Ohtsuka exhaustion $ \left\lbrace U_n\right\rbrace _n $  of $ U $ such that $ \left\lbrace V_n=f(U_n)\right\rbrace _n $ is an Ohtsuka exhaustion of $ V $.
	\end{claim*}
	\begin{proof}
		Let $ \left\lbrace V_n\right\rbrace _n $ be an Ohtsuka exhaustion of $ V $. Then, by assumption, $ \left\lbrace V_n\right\rbrace _n $ satisfies that each $ V_n $ is finitely connected, $ \overline{V_n}\subset V_{n+1} $, and $ \cup_n V_n =V $. Let $U_n = f^{-1}(V_n)\cap U$; we can take $V_1$ large enough that each $U_n$ is connected. We shall see that $ \left\lbrace U_n\right\rbrace _n  $ also satisfies the properties of being an Ohtsuka exhaustion, but this comes straightforward from the fact that $ f\colon U\to V $ is proper. Then, $ \left\lbrace U_n\right\rbrace _n  $ is the Ohtsuka exhaustion of $ U $ we were looking for.
	Note that the assumption that $ f\colon U\to V $ is proper is essential: otherwise $ \overline{U_n}\subset U_{n+1} $ may not hold.
	\end{proof}
	
			\begin{figure}[h]\centering
		\includegraphics[width=12cm]{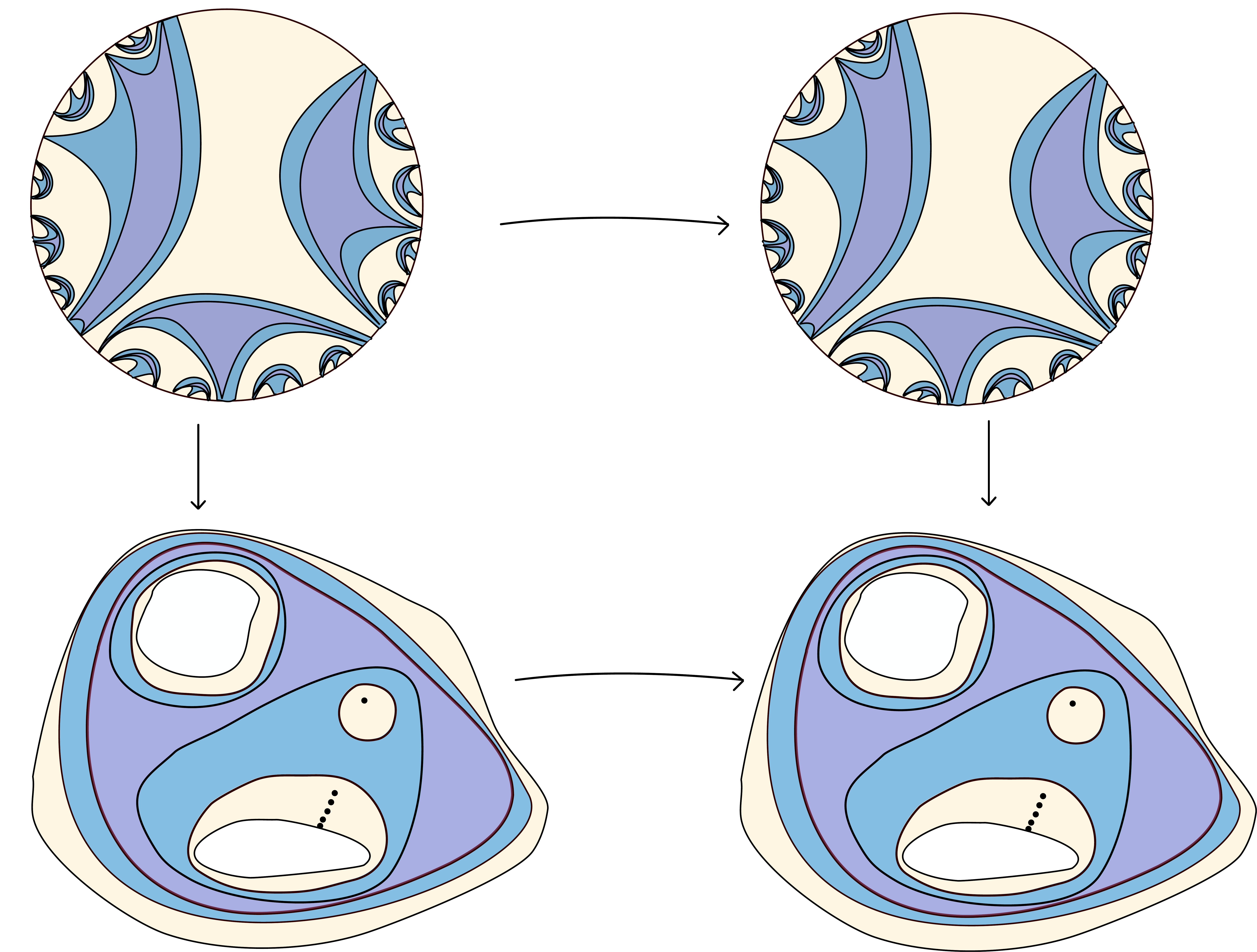}
		\setlength{\unitlength}{12cm}
			\put(-0.89, 0.38){$ \pi_U $}
		\put(-0.21, 0.38){$ \pi_V $}
		\put(-0.98, 0.26){$U $}
		\put(-0.08, 0.26){$ V $}
		\put(-0.98, 0.67){$\DD $}
		\put(-0.1, 0.67){$ \DD$}
		\put(-0.5, 0.6){$g $}
		\put(-0.5, 0.23){$ f$}
		\caption{\footnotesize Sketch of the construction in the claim of Lemma \ref{lemma-lifts-and-limit-sets-proper}. }\label{Ohtsuka}
	\end{figure}

	Now, let us prove the statements in Lemma \ref{lemma-lifts-and-limit-sets-proper}. For the remainder of the proof, we let $ \left\lbrace U_n\right\rbrace _n $ be the Ohtsuka exhaustion of $ U $ constructed in the previous claim.
	\begin{enumerate}[label={\normalfont(\alph*)}]
		\item Let $ \xi\in\partial\mathbb{D} $ be of escaping type. Then, consider all the crosscuts $ \left\lbrace C_m\right\rbrace _m $ out of all the curves in $ \left\lbrace \pi_U^{-1}(\partial U_n) \right\rbrace _n $ separating $ 0 $ from $ \xi $, and order them in such a way that $ C_m $ separates $ 0 $ from $ C_{m+1} $ (this is a {\em fundamental sequence of crosscuts}, in the terminology of \cite[Def. 4.5]{FerreiraJove}). Take $ \eta $ to be a curve landing at $ \xi $, and crossing each of the $ C_m $'s once. Then, $ \pi_U(\eta) $ is a curve converging to $ \partial U $ in such a way that crosses each $ \pi_U(C_m) \in \partial U_n$ once.
		
		By means of the commutative relation $ f\circ\pi_U=\pi_V\circ g $, we deduce that $ g(\eta) $ is a curve converging to $ \partial \mathbb{D} $, crossing precisely once each of  the crosscuts $ \left\lbrace g(C_m)\right\rbrace _m $ in $ \left\lbrace \pi_V^{-1}(\partial V_n) \right\rbrace _n $. We claim that $ g(\eta) $ lands at a point $ \zeta $ in $ \partial \mathbb{D} $ (and thus $ g^*(\xi)=\zeta $, by the Lehto-Virtanen Theorem  \cite[Sect. 4.1]{Pom92}, and $ \zeta $ is of escaping type, by Prop. \ref{prop-escaping-bounded-bungee}). Assume, by contradiction, that $ g(\eta) $ accumulates on a continuum $ I $ on $ \partial \mathbb{D}$. In such a case, this would mean that the crosscuts $ \left\lbrace g(C_m)\right\rbrace _m $ do not shrink to a point, and hence $ I\subset \partial\mathbb{D}\smallsetminus\Lambda_V $. In particular, there exists a crosscut neighbourhood $ N $ of $ I $  in which $ \pi_V $ acts conformally, and hence $ \pi_V(N) $ is simply connected. Since $ f\colon U\to V $ is proper (and hence it has a finite number of singular values, all of them critical), we can take $ N $ small enough so that there are no singular values in $ \pi_V(N) $. In particular, this would imply that all inverse branches of $ g $ are well-defined and conformal in a neighbourhood of $ N $ (Prop. \ref{prop-SVinner}). This contradicts the fact that $ g(\eta) $ accumulates in a continuum (since we assumed that $ \eta $ lands at a point). Therefore, $ g(\eta) $ lands at a point $ \zeta $ in $ \partial \mathbb{D} $, with $ g^*(\xi)=\zeta $.
		
		It is left to see that $ \xi \in \Lambda_U$  if and only if  $\zeta = g^*(\xi) \in \Lambda_V$. The argument above shows that $ \zeta\notin \Lambda_V $ implies $ \xi\notin \Lambda_U $. To prove the reciprocal, let $ \xi \notin \Lambda_U$, then there exists a crosscut neighbourhood $ N $ of $ \xi $ such that $ \pi_U|_N $  is conformal. Taking $ N $ smaller (but non-degenerate), one can assume that $ g|_N $ is conformal (and so is $ f|_{\pi_U(N)} $). Therefore, $ g(N) $ is a crosscut neighbourhood of $ \zeta $ in which $ \pi_V $ acts conformally. This implies $ \zeta\notin \Lambda_V $, and ends the proof of the statement.
		\item The proof proceeds as in the escaping type case. Indeed, let $ \xi\in\partial\mathbb{D} $ be of bungee type, and let $ \left\lbrace C_m\right\rbrace _m $ be the fundamental sequence of crosscuts for $ \xi $, with respect to the exhaustion $ \left\lbrace U_n\right\rbrace _n $. Let $ \eta$   be a curve landing at $ \xi $, and crossing each of the $ C_m $'s once.
		Then, $ g(\eta) $ is a curve landing at  $ \partial \mathbb{D} $, crossing precisely once the crosscuts $ \left\lbrace g(C_m)\right\rbrace _m $ in $ \left\lbrace \pi_V^{-1}(\partial V_n) \right\rbrace _n $. Hence $ \left\lbrace g(C_m)\right\rbrace _m $ is a sequence of nested crosscuts. 
		
		We shall see that $ \left\lbrace g(C_m)\right\rbrace _m $ shrinks to a point.
		To that end, note that the sequence of crosscuts $ \left\lbrace g(C_m)\right\rbrace _m $ contains a subsequence $  \left\lbrace g(C_{m_j})\right\rbrace _j  $ such that $ g(C_{m_j})\in \partial V_{n_j} $ and $ m_j\to\infty $, and another subsequence $  \left\lbrace g(C_{m_k})\right\rbrace _k  $ such that $ g(C_{m_k})\in \partial V_{n_0} $ for some fixed $ n_0 $ (because the same was true for $ \left\lbrace C_m\right\rbrace _m $ since $ \xi $ is of bungee type, and $ U_n=f^{-1}(V_n) $, for all $ n\geq 0 $). Therefore, any point lying in the circular interval delimited by $ \left\lbrace g(C_m)\right\rbrace _m $ is of bungee type. Since points of bungee type have zero $ \lambda $-measure (since $ \pi_V^* $ does not exist for such points), the circular interval delimited by $ \left\lbrace g(C_m)\right\rbrace _m $ is in fact degenerate and consists of a single point $ \zeta\in\partial\mathbb{D} $. 
		
		Since  $ g(\eta) $ crosses precisely once the crosscuts $ \left\lbrace g(C_m)\right\rbrace _m $, $ g(\eta) $ lands at $ \zeta $.
		As a consequence of the Lehto-Virtanen Theorem  \cite[Sect. 4.1]{Pom92}, we have that $ g^*(\xi) =\zeta$, since $ \eta $ lands at $ \xi $ and $ g(\eta) $ lands at $ \zeta $, and $ \zeta $ is of bungee type, as desired.
		
		\item It is a reformulation of the previous two statements. Indeed, if $ \xi\in\partial\mathbb{D} $ is of escaping or bungee type, the radial limit exists. Hence, if the radial limit does not exist, $ \xi $ must be of bounded type.
	\end{enumerate}
\end{proof}

\begin{obs}
	Lemmas \ref{lemma-mapping-properties} and \ref{lemma-lifts-and-limit-sets-proper} are written in the terminology of points of escaping, bungee and bounded type, as introduced in \cite[Def. 4.1]{FerreiraJove}. In \cite[Sect. 5.2]{FerreiraJove}, the authors introduced a refined classification of points of escaping type in terms of a classification of prime ends. It seems plausible that this classification of prime ends is stable when considering maps $ f\colon U\to V $, so one may obtain results analogous to the ones in 	Lemmas \ref{lemma-mapping-properties} and \ref{lemma-lifts-and-limit-sets-proper}. Note, however, that we do not need this refined approach to work with Fatou components in the subsequent sections.
\end{obs}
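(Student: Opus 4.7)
The final statement is an informal remark rather than a theorem: it asserts the plausibility that the refined prime-end classification of escaping-type points from \cite[Sect. 5.2]{FerreiraJove} is stable under the commutative diagram of Setting \ref{setting}, and that the conclusions of Lemmas \ref{lemma-mapping-properties} and \ref{lemma-lifts-and-limit-sets-proper} admit analogues at this finer level. Accordingly, my proposal is a road map for verifying this plausibility, rather than a self-contained proof.

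The plan is a case-by-case check. I would first translate each refined subtype from \cite[Sect. 5.2]{FerreiraJove} into a property of the fundamental sequence of crosscuts $\{C_m\}$ at $\xi$ arising from $\pi_U^{-1}(\partial U_n)$ for a suitable Ohtsuka exhaustion $\{U_n\}$ — typically a condition on the eventual homotopy class of $\pi_U(C_m)$ in $U\smallsetminus \{\text{a boundary component}\}$, on whether the impression of the prime end is a point or a continuum, on the number of boundary components hit, and on the non-tangential accessibility behaviour. Then I would push this data through the diagram using $\pi_V \circ g = f \circ \pi_U$, following the proof skeleton of Lemma \ref{lemma-mapping-properties}\ref{lemma-mapping-properties-c} and Lemma \ref{lemma-lifts-and-limit-sets-proper}(a): the curves $\sigma_n = \pi_U(C_m)\in\partial U_n$ are sent by $f$ to non-contractible curves isolating the target boundary component (by the argument principle, provided $C\Subset\Omega$ or $f\colon U\to V$ is proper), and by the claim inside the proof of Lemma \ref{lemma-lifts-and-limit-sets-proper} one can arrange $V_n = f(U_n)$ to be an Ohtsuka exhaustion of $V$. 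Hence $\{g(C_m)\}$ realises, at $g^*(\xi)$, a fundamental sequence of the same structural type as $\{C_m\}$ did at $\xi$. Finally I would establish the reciprocal — that a prime end of $\pi_V$ at $g^*(\xi)$ of a given subtype pulls back to a prime end of $\pi_U$ at $\xi$ of the same subtype — by invoking inverse branches of $g$ on crosscut neighbourhoods, as in Lemma \ref{lemma-lifts-and-limit-sets-SV}.

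The main obstacle will be those refined subtypes whose definition is sensitive to local injectivity or to the geometry of the impression. Under the assumption that $SV(f|_U)\Subset U$ or that $f\colon U\to V$ is proper, inverse branches of $g$ are well-defined on crosscut neighbourhoods of regular points on $\partial\mathbb{D}$ (Proposition \ref{prop-SVinner}, Lemma \ref{lemma-sing-limit-sets}), and the transfer is essentially automatic. Without such control, however, an escaping prime end of one subtype may pull back to prime ends of several different subtypes, because singular values accumulating on a boundary component can destroy homotopy-type invariants. A clean formulation of the analogues of Lemmas \ref{lemma-mapping-properties} and \ref{lemma-lifts-and-limit-sets-proper} at the prime-end level would therefore need hypotheses of the same flavour already present in those lemmas — either $C\Subset\Omega$ together with $C\cap \overline{SV(f|_U)}=\emptyset$, or properness of $f\colon U\to V$.

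A secondary obstacle is bookkeeping: each subtype in \cite[Sect. 5.2]{FerreiraJove} would need its own verification, and some attributes (for instance those distinguishing how the impression sits inside a single boundary component) require control on the branching of $f$ restricted to neighbourhoods of that component. Since, as the remark itself emphasises, none of these refinements are used in the rest of the paper, the natural place for this analysis is a sequel rather than this section, and I would leave the detailed case enumeration to future work.
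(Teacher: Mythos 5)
This statement is a remark asserting plausibility of a future extension, and the paper supplies no proof of it; your proposal correctly identifies this and outlines a verification strategy built from exactly the machinery the paper itself uses in Lemmas \ref{lemma-mapping-properties} and \ref{lemma-lifts-and-limit-sets-proper} (Ohtsuka exhaustions, fundamental sequences of crosscuts, the argument principle, and inverse branches via Proposition \ref{prop-SVinner}). Your road map is consistent with what the authors implicitly have in mind, and your caveats about needing hypotheses of the form $C\Subset\Omega$ or properness match the hypotheses already present in those lemmas.
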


\begin{obs}
Fixed points of parabolic deck transformations have been excluded from the previous discussion since Fatou components cannot have isolated boundary points (recall that the Julia set is perfect, Thm. \ref{thm-dynamicsK}{\em \ref{Fatou3}}). In a more general set-up, one can consider $ f\colon U\to V $ such that $ U $ has isolated boundary points. It seems plausible to extend the previous lemmas to fixed points of parabolic deck transformations.
\end{obs}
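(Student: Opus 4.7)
The plan is to treat a parabolic fixed point $\xi\in\partial\mathbb{D}$ of the deck group of $\pi_U$ as a degenerate instance of the escaping case analyzed in Lemma \ref{lemma-mapping-properties}\ref{lemma-mapping-properties-c}, where the single boundary component $C$ of $\partial U$ collapses to the isolated boundary point $p=\pi_U^*(\xi)$. The first step would be to verify the basic mapping-theoretic facts at $\xi$. If $\gamma$ is a parabolic element of the deck group fixing $\xi$, then there is a horocyclic neighbourhood $H\subset\mathbb{D}$ at $\xi$ on which $\pi_U$ factors through the quotient $H/\langle\gamma\rangle$, which is conformally a punctured disk centred at $p$. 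Hence $\pi_U^*(\xi)=p$, and both $Cl_\mathcal{A}(\pi_U,\xi)$ and $Cl_R(\pi_U,\xi)$ equal the singleton $\{p\}$. In particular $\xi$ may be regarded as of \emph{escaping type} with its angular cluster set compactly contained in $\Omega(f)$ as soon as $p\in\Omega(f)$.

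Next I would push this structure through the commutative diagram in the Setting \ref{setting}. Assume $p\in\Omega(f)$ and let $q=f(p)$. The image $f(\pi_U(H))$ is then (possibly after shrinking $H$) contained in a punctured neighbourhood of $q$ in $V$, provided $q\in\widehat{\partial}V$; by the argument principle applied to lifts of small loops around $p$, the curve $g(R_\xi)$ winds around a horocycle at some $\zeta\in\partial\mathbb{D}$. Applying the Lehto--Virtanen theorem, as in the proof of Lemma \ref{lemma-mapping-properties}\ref{lemma-mapping-properties-c}, yields $g^*(\xi)=\zeta$. One then divides into cases according to the nature of $q$: if $q$ is an isolated boundary point of $V$, the proof should show that $\zeta$ is a parabolic fixed point for the deck group of $\pi_V$, with multiplier matching the branching of $f$ at $p$; if $q$ lies on a non-degenerate boundary component $C'$ of $V$ with $C'\Subset\Omega(f)$, then $\zeta$ should sit in the limit set $\Lambda_V$ as an ordinary escaping-type point. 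The analogue of Lemma \ref{lemma-lifts-and-limit-sets-SV} for regular values at parabolic fixed points should follow from the same argument by inverting the roles of $U$ and $V$ and using Proposition \ref{prop-SVinner} to rule out singularities of $g$ at $\xi$ when $p\notin SV(f|_U)$.

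For the proper case of Lemma \ref{lemma-lifts-and-limit-sets-proper}, the modification is cleaner: properness forces $f$ to send isolated boundary points of $U$ to isolated boundary points of $V$ with finite local degree, so one can enlarge an Ohtsuka exhaustion by adjoining horocyclic collars around the isolated boundary points and verify that this enlarged exhaustion is still preserved by $f$. The fundamental sequence of crosscuts at $\xi$ then maps to a fundamental sequence at $\zeta$, and the shrinking argument in the proof of Lemma \ref{lemma-lifts-and-limit-sets-proper}(a) goes through verbatim, giving $\xi\in\Lambda_U$ iff $\zeta\in\Lambda_V$.

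The main obstacle I foresee is the subtlety of radial versus horocyclic approach at a parabolic fixed point: a radius $R_\xi$ does not in general stay inside a single fundamental domain of $\langle\gamma\rangle$, so the curve $\pi_U(R_\xi)$ may wind infinitely often around $p$ before landing, and one must show that $g(R_\xi)$ nonetheless lands (rather than accumulates on a horocycle) at $\zeta$. This can presumably be handled by replacing radii by non-tangential approach regions adapted to $\gamma$ (as in the classical Julia--Carathéodory theory), and by invoking the fact that $f$ is single-valued at $p\in\Omega(f)$ so that the winding of $f\circ\pi_U(R_\xi)$ around $q$ is controlled by the local degree of $f$ at $p$. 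The remaining delicate case, where $p\in E(f)$, cannot be covered by the present techniques and would require a genuinely new input; this is why the authors phrase the extension as merely plausible.
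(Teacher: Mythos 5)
This item is a remark, not a theorem: the paper offers no proof of the speculated extension. The only verifiable mathematical content of the remark is the parenthetical claim that a Fatou component cannot have an isolated boundary point, which follows because a puncture $p$ of $U$ would satisfy $\mathcal{J}(f)\cap D(p,r)\subset\{p\}$ for small $r$, making $p$ an isolated point of the perfect set $\mathcal{J}(f)$ -- a contradiction with Theorem \ref{thm-dynamicsK}\emph{\ref{Fatou3}}. Your proposal does not address this, and instead sets out to prove the extension that the authors explicitly leave as ``plausible''; there is therefore no proof in the paper against which to match your argument.

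Taken on its own terms, your sketch is consistent with the paper's toolkit, but three points are worth flagging. First, the landing of $g(R_\xi)$ at a well-defined $\zeta\in\partial\mathbb{D}$ when $p=\pi_U^*(\xi)\in\Omega(f)$ is already an instance of Lemma \ref{lemma-radial-limits}, which only requires that $\pi_U^*(\xi)$ exist and lie in $\Omega(f)$; since the radial limit at a parabolic fixed point is the puncture $p$, no separate argument-principle or winding analysis is needed for existence of $g^*(\xi)$. Second, the ``main obstacle'' you foresee is not real: $R_\xi$ is a hyperbolic geodesic, and a geodesic ray entering a cusp neighbourhood and converging to the puncture does not wind infinitely often around it (in the half-plane model with the parabolic conjugated to $z\mapsto z+1$, such a ray is eventually vertical and projects to an essentially radial arc in the punctured disk). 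Third, the genuinely new content -- that $\zeta$ is again a parabolic fixed point of the deck group of $\pi_V$ when $f(p)$ is an isolated boundary point of $V$, the relation between the local degree of $f$ at $p$ and the index $g\circ\gamma=(\gamma')^k\circ g$ (note that ``multiplier'' is not the right invariant for a parabolic element), and the cases where $f(p)$ lies on a non-degenerate component or where $p\in E(f)$ -- remains at the level of a plausibility sketch rather than a proof, which is precisely the status the authors themselves assign to it.
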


\subsection{Invariance of limit sets in the sense of measure} 
From the results in Section \ref{subsect-mapping-properties}, we see that regular points are mapped to regular points and singular points are mapped to singular points, as long as there is some control on $ E(f) $ and $SV(f|_U)$.  In fact, in the particular case when $ f\colon U\to V $ is proper, the following is an immediate consequence of Lemma \ref{lemma-lifts-and-limit-sets-proper}.

\begin{cor}{\bf (Invariance of limit sets for proper maps)}\label{cor-invariance-limit-sets-proper}
In the Setting \ref{setting}, assume $ f\colon U\to V $ is proper. Then, \[\Lambda_U=(g^*)^{-1}(\Lambda_V). \]
\end{cor}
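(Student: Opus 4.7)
The plan is to derive the equality from the trichotomy of points of $\partial\mathbb{D}$ into escaping, bungee, and bounded type, applying Lemma \ref{lemma-lifts-and-limit-sets-proper} case by case. The preliminary observation I would use throughout is that every $\xi\in\partial\mathbb{D}\setminus\Lambda$ is necessarily of escaping type: by the crosscut characterization recalled in Section \ref{subsect-FerreiraJove1}, such a point admits a non-degenerate crosscut neighbourhood on which $\pi$ is conformal, so $\pi$ extends continuously at $\xi$ with radial limit in $\widehat{\partial} U$. Consequently, points of bungee or bounded type automatically lie in $\Lambda$.

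For the inclusion $(g^*)^{-1}(\Lambda_V)\subseteq\Lambda_U$, I would argue by contrapositive: if $\xi\in\partial\mathbb{D}\setminus\Lambda_U$, then $\xi$ is of escaping type, so Lemma \ref{lemma-lifts-and-limit-sets-proper}\,(a) applies and yields $g^*(\xi)\in\partial\mathbb{D}\setminus\Lambda_V$. For the reverse inclusion, I would take $\xi\in\Lambda_U$ and split according to its type. The escaping case follows directly from Lemma \ref{lemma-lifts-and-limit-sets-proper}\,(a), and the bungee case combines Lemma \ref{lemma-lifts-and-limit-sets-proper}\,(b) (which ensures $g^*(\xi)$ exists and is of bungee type for $\pi_V$) with the preliminary observation applied to $V$. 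The bounded type case needs an additional short argument: if $\xi$ is of bounded type and $g^*(\xi)$ is defined, then $\pi_V(g(R_\xi)) = f(\pi_U(R_\xi))$ is compactly contained in $V$, forcing $g^*(\xi)$ to be of bounded type for $\pi_V$ and hence in $\Lambda_V$.

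The main subtlety is the bounded type case, since Lemma \ref{lemma-lifts-and-limit-sets-proper}\,(c) only records that non-existence of $g^*(\xi)$ implies bounded type, and not the converse. The equality in the corollary is naturally read on the full-measure set where $g^*$ is defined, so bounded type points at which $g^*$ fails to exist are implicitly excluded from $(g^*)^{-1}(\Lambda_V)$; the above argument shows that whenever $g^*(\xi)$ does exist, membership in $\Lambda_U$ and in $g^{*{-1}}(\Lambda_V)$ coincide.
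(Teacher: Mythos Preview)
Your proof is correct and follows precisely the route the paper intends: the paper records the corollary as ``an immediate consequence of Lemma~\ref{lemma-lifts-and-limit-sets-proper}'' without further detail, and your case-by-case analysis via the escaping/bungee/bounded trichotomy is exactly how that derivation is meant to be unpacked. Your treatment of the bounded case (invoking the compactness argument of Lemma~\ref{lemma-mapping-properties}\ref{lemma-mapping-properties-a}) and your explicit acknowledgement that the equality should be read on the set where $g^*$ exists are both appropriate, and match how the corollary is used later (for measure-theoretic purposes in Theorems~\ref{thmB} and~\ref{thmD}).

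One small wording issue: in your preliminary observation you say that for $\xi\in\partial\mathbb{D}\setminus\Lambda$, ``$\pi$ extends continuously at $\xi$''. This is slightly too strong---the simply connected image $\pi(N)=U_C$ need not be locally connected, so Carath\'eodory-type continuous extension may fail. What is true (and what you actually need) is that $\pi(R_\xi(t))$ accumulates only on $\widehat{\partial}U$ as $t\to 1^-$, since it cannot accumulate in $U_C$ nor on the true crosscut $C$; this suffices for $\xi$ to be of escaping type. The conclusion and the rest of the argument are unaffected.
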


In the more general setting where it is only assumed that $ SV(f|_U)$ is compactly contained in $U$, we can still guarantee that there is some sort of invariance in the sense of measure, as the following proposition shows.
\begin{prop}{\bf (Invariance of limit sets)}\label{prop-invariance-limit-sets}
	In the Setting \ref{setting}, assume $ SV(f|_U)$ is compactly contained in $U$. Then,  there exists a subset $ Z\subset \partial\mathbb{D}\smallsetminus\Lambda_V $ such that
	\[ (g^* )^{-1}(Z)\subset \partial\mathbb{D}\smallsetminus\Lambda_U \ \  \textrm{ and } \ \  \lambda (Z)=\lambda (\partial\mathbb{D}\smallsetminus\Lambda_V).\]
	
\end{prop}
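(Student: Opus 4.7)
The statement is a direct distillation of Lemma \ref{lemma-lifts-and-limit-sets-SV} at the measure-theoretic level, so my plan is to apply it essentially verbatim rather than construct any auxiliary geometric object. Specifically, I would take the candidate
\[ Z \coloneqq \partial\mathbb{D}\smallsetminus \Lambda_V. \]
Since $\Lambda_V$ is a closed (hence Borel) subset of $\partial\mathbb{D}$, the set $Z$ is measurable, and the measure condition $\lambda(Z)=\lambda(\partial\mathbb{D}\smallsetminus\Lambda_V)$ holds by construction (in fact with equality of the sets themselves).

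Next, I would verify the containment. Let $\zeta\in(g^*)^{-1}(Z)$; by definition this means that $g^*$ is defined at $\zeta$ and that $\xi \coloneqq g^*(\zeta)$ belongs to $\partial\mathbb{D}\smallsetminus \Lambda_V$. The hypothesis $SV(f|_U)\Subset U$ is exactly what is required to invoke Lemma \ref{lemma-lifts-and-limit-sets-SV}, whose conclusion gives $\zeta\notin E(g)$ and $\zeta\in\partial\mathbb{D}\smallsetminus\Lambda_U$. Hence $(g^*)^{-1}(Z)\subset \partial\mathbb{D}\smallsetminus\Lambda_U$, as desired.

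The only point that deserves care is the fact that $g^*$ is defined only $\lambda$-almost everywhere on $\partial\mathbb{D}$. However, this causes no obstruction: if $g^*$ is undefined at $\zeta$, then $\zeta$ is not in $(g^*)^{-1}(Z)$ by definition, so the containment statement is insensitive to the (measure zero) exceptional set. If one prefers a version in which $Z$ is exactly the image of the complement, one can, if needed, replace $Z$ by $Z\cap g^*(\partial\mathbb{D}\smallsetminus N)$, where $N$ denotes the $\lambda$-null set where $g^*$ fails to exist; this modification does not affect $\lambda(Z)$ by the non-singularity of $g^*$ from Theorem \ref{thm-ergodic-properties}\ref{ergodic-properties-nonsing}. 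There is no real obstacle in this proof — the conceptual work has already been done in Lemma \ref{lemma-lifts-and-limit-sets-SV}, which pushes regular-point information from $V$ back to $U$ through any inverse branch of $g$; the proposition merely packages that statement at the level of $\lambda$-measure.
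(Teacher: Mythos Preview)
Your proof is correct and takes a more direct route than the paper. You set $Z=\partial\mathbb{D}\smallsetminus\Lambda_V$ in full and invoke Lemma~\ref{lemma-lifts-and-limit-sets-SV} verbatim, which already handles every $\xi\in\partial\mathbb{D}\smallsetminus\Lambda_V$. The paper instead passes to a further full-measure subset: it introduces an auxiliary set $X=\{\xi:\pi_U^*(\xi)\text{ exists, }\pi_U^*(\xi)\notin E(f),\ f'(\pi_U^*(\xi))\neq 0\}$, shows that $g^*(\partial\mathbb{D}\smallsetminus X)$ is $\lambda$-null, and takes $Z$ to be those $\xi\in\partial\mathbb{D}\smallsetminus\Lambda_V$ whose entire $g^*$-fibre lies in $X$; only then does it argue that $\pi_V^*(\xi)$ avoids singular values of $f$ and appeal to Lemma~\ref{lemma-lifts-and-limit-sets-SV}. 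This extra bookkeeping tracks where $\pi_U^*$ lands away from singularities and critical points, but given Lemma~\ref{lemma-lifts-and-limit-sets-SV} as stated it is not needed --- your observation that the lemma already applies to all of $\partial\mathbb{D}\smallsetminus\Lambda_V$ makes the excision superfluous. Your version is shorter; the paper's version carries along finer control on the $g^*$-preimage, which may reflect an earlier stage of the argument in which the supporting lemma was weaker.
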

\begin{proof}
	Let \[X\coloneqq \left\lbrace \xi\in\partial \mathbb{D} \colon \pi_U^*(\xi)\textrm{ exists},\ \pi_U^*(\xi)\notin E(f), \ f'(\pi_U^*(\xi))\neq 0\right\rbrace \subset \partial\mathbb{D}. \] That is, $ Y\coloneqq \partial\mathbb{D}\smallsetminus X $ consists of all points $ \xi\in\partial\mathbb{D} $ for which either the radial limit does not exist or it exists, but it is a singularity or a critical point. It is clear that $ \lambda (Y)=0 $. We claim that  $ \lambda(g^*(Y)) =0$. Indeed, assume, on the contrary, that $ g^*(Y) $ has positive $ \lambda $-measure. This would mean, by Lemma \ref{lemma-radial-limits}, that there exists a set of positive measure of preimages under $ \pi_V $ of essential singularities or critical values of $ f $. This is  a contradiction because the latter sets are countable (and hence their preimages under $ \pi_V $ have zero measure). Hence we deduce that $ \lambda(g^*(Y))=0 $,  as desired.

	Let\[Z\coloneqq \left\lbrace \xi\in\partial\mathbb{D}\colon (g^*)^{-1}(\xi)\subset X\right\rbrace \cap{(\partial \mathbb{D}\smallsetminus \Lambda_V)}\subset\partial\mathbb{D}.\]
	The set $\left\lbrace \xi\in\partial\mathbb{D}\colon (g^*)^{-1}(\xi)\subset X\right\rbrace$ has full measure, and so by construction $ \lambda(Z)=\lambda(\partial\mathbb{D}\smallsetminus\Lambda_V) $. 
	\begin{claim*}
		If   $ \xi\in Z $, then  $ (g^{*})^{-1}(\xi) \subset  \partial\mathbb{D}\smallsetminus\Lambda_U$.
	\end{claim*}
	\begin{proof}
	Let $ \zeta\in (g^{*})^{-1}(\xi) $. Since $ \pi_V^*(\xi)$ is not a singular value for $ f$ (otherwise for some $ \eta\in  (g^{*})^{-1}(\xi)$, $ \pi_U^*(\eta) $ is either a critical point or a singularity of $ f $, a contradiction), then Lemma \ref{lemma-lifts-and-limit-sets-SV} applies to show that $ \zeta \in\partial\mathbb{D}\smallsetminus\Lambda_U$. 
	\end{proof}
Hence, the statement in the proposition follows now from the previous claim.

\end{proof}

%
%
\section{Multiply connected basins of rational maps. Theorem \ref{thmB}}
Now we shall prove Theorem \ref{thmB}, which asserts that, for an invariant multiply connected basin $U$ of a rational map, the following hold.
\begin{enumerate}[label={(\alph*)}]
	\item The limit set $ \Lambda$ of the universal covering of $U$  is  $\partial\mathbb{D} $.
	\item The associated inner function $ g $ has infinite degree, and $ E(g) =\partial\mathbb{D}$.
	\item $ (g^*)^n(\xi) $ is well-defined  for all $ n\geq 0 $ if and only if $ \left\lbrace \pi(t\xi)\colon\xi\in \left[ 0,1\right) \right\rbrace  $ is not compactly contained within $ U $ (or, equivalently, if and only if $ \xi $ is not of bounded type).
\end{enumerate}

The main tool to prove the previous statements is the mapping properties developed in Section \ref{section-main-construction} for proper maps between multiply connected domains. Hence, we shall work in the following more general setting.

\begin{setting}\label{settingProper} Let $ f \in\mathbb{K}$, and let $ U$ be an attracting or parabolic basin. Assume $ U $ is multiply connected, and $ f|_U $ is proper.
	Consider  $\pi\coloneqq \pi_U\colon\DD\to U $ a universal covering map, and let $ g\colon\mathbb{D}\to\mathbb{D} $ be such that $ \pi \circ g= f\circ \pi$.
\end{setting}

It is clear that proving Theorem \ref{thmB} under the assumptions given in \ref{settingProper} implies the theorem in its original form. We prove it next, but first we discuss which inner functions can be associated with invariant Fatou components.
\subsection{Inner functions associated with invariant Fatou components} The following proposition gives a complete description of the inner functions which can be associated to invariant Fatou components, depending on the type of Fatou component. Note that this proposition, in particular, gives strong constraints on which functions can arise as associated inner functions for rational maps. 
\begin{prop}{\bf (Inner functions associated with invariant Fatou components)}\label{prop-inner-rational}
	Let $ f \in \mathbb{K}$, and let $ U $ be an invariant Fatou component of $ f $, such that  $U$ is not a Baker domain. Let $ \pi\colon\mathbb{D}\to U $ be a universal covering map, and let $ g\colon\mathbb{D}\to\mathbb{D} $ be an inner function such that $ \pi\circ g=f\circ \pi $. Then, the following holds.
	\begin{enumerate}[label={\em (\alph*)}]
		\item $ g $ is conjugate to an irrational rotation if and only if $ U $ is a Siegel disk.
		\item $ g $ is a hyperbolic Möbius transformation if and only if $ U $ is a Herman ring.
		\item $ g $ is a non-univalent elliptic inner function if and only if $ U $ is a basin of attraction. 
		\item $ g $ is a doubly parabolic inner function  if and only if $ U $ is a parabolic basin. The inner function $ g $ associated to a parabolic basin satisfies that $ g^* $ is recurrent with respect to the Lebesgue measure.
	\end{enumerate}
Moreover, assume $U$ is a basin, and $f|_U$ is a proper map. Then, $ g $ has finite degree if and only if $U$ is simply connected; otherwise $g$ has infinite degree. 
\end{prop}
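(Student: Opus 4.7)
The plan is to use the commutative relation $\pi \circ g = f \circ \pi$ to read off the Cowen type of $g$ from the known internal dynamics of $f|_U$. Since invariant non-Baker Fatou components of $f \in \mathbb{K}$ come in exactly four types (Siegel disk, Herman ring, attracting basin, parabolic basin), and the forward implications of (a)--(d) assign mutually exclusive inner-function types to each, the converses follow by elimination.

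For (a), a Siegel disk is simply connected so $\pi$ is a Riemann map and $g = \pi^{-1}\circ f\circ\pi$ is directly conjugate to the irrational rotation realised by $f|_U$. For (c), place the attracting fixed point $p$ at $\pi(0)$ and pick the lift with $g(0) = 0$; the chain rule yields $g'(0) = f'(p)$, whose modulus is strictly less than one, ruling out $g$ being a rotation and hence identifying $g$ as a non-univalent elliptic inner function. For (b), $f|_U$ is a conformal automorphism of the doubly connected $U$, so the lift $g$ is an automorphism of $\mathbb{D}$, i.e.\ a Möbius transformation; writing $g\circ\gamma = \gamma^k\circ g$ for a generator $\gamma$ of the cyclic deck group gives $g\gamma g^{-1} = \gamma^k$, and since Möbius conjugacy preserves the hyperbolic multiplier of $\gamma$, one must have $k = 1$; the possibility that $g$ has an interior fixed point is ruled out because then iterates of $f|_U$ would converge inside $U$, contradicting its rotational nature, so $g$ is hyperbolic and shares the fixed points of $\gamma$. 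For (d), the parabolic fixed point $p \in \widehat{\partial}U$ gives $f^n|_U \to p$ locally uniformly, which lifts to $g^n \to \zeta \in \partial\mathbb{D}$ locally uniformly, so $\zeta$ is the Denjoy--Wolff point of $g$; the Fatou coordinate at $p$ conjugates $f|_U$ to $z \mapsto z + 1$ on a domain whose lift through $\pi$ produces a Koenigs--Cowen semiconjugacy $\sigma\colon \mathbb{D}\to\mathbb{C}$ for $g$, which is Cowen's defining property of double parabolicity. Recurrence of $g^*$ in this case then follows by combining Theorem \ref{thm-ergodic-properties} with the classical boundary-dynamics results of Aaronson and Doering--Mañé for doubly parabolic inner functions.

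For the moreover, if $U$ is simply connected then $\pi$ is a Riemann map, so $g$ has degree $\deg(f|_U)$, which is finite by properness. If $U$ is multiply connected, apply Lemma \ref{lemma-sing-limit-sets}\ref{lemma-singularities-b}: every attracting or parabolic basin of a map in class $\mathbb{K}$ contains a singular value of $f$ (a standard consequence of Fatou's theorem extended to class $\mathbb{K}$), so $SV(f|_U) \neq \emptyset$ and $g$ must have infinite degree. The main technical obstacle I foresee is the doubly parabolic identification in (d): cleanly distinguishing doubly parabolic from simply parabolic requires careful analysis of the Fatou coordinate near $p$ together with its interaction with $\pi$, which should be handled either via Bonfert's lifting theorems for Koenigs--Cowen models through covering maps or by a direct appeal to the uniqueness of Cowen's functional model applied to the lift.
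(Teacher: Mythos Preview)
Your overall strategy---establish the forward implications case by case and obtain the converses by elimination among the four mutually exclusive non-Baker types---is the same as the paper's. Your treatments of (a), (c), and the ``moreover'' clause are correct and close to the paper's, though for (c) you argue via a direct chain-rule computation $g'(0)=f'(p)$ together with the Schwarz lemma, whereas the paper proves a general fixed-point lemma (Lemma~\ref{lem:fp}: $g$ has an interior fixed point iff $f|_U$ does) and then invokes it uniformly for (c) and (d).

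There is a genuine error in your argument for (b). You correctly deduce that $g$ is a M\"obius transformation and that $g\gamma g^{-1}=\gamma^{k}$; the multiplier argument gives $|k|=1$, and in fact $k=1$ since $f|_U$ is homotopic to the identity on the annulus. But your stated reason for ruling out an interior fixed point---``then iterates of $f|_U$ would converge inside $U$''---is false: an elliptic automorphism of $\mathbb{D}$ is conjugate to a rotation, and its iterates do \emph{not} converge. The correct argument is either (i) from $k=1$ you have $g\gamma=\gamma g$, and two commuting non-identity M\"obius transformations share their fixed-point set, so $g$ fixes the two boundary points of $\Lambda$ and is therefore hyperbolic; or (ii) an interior fixed point of $g$ would project to a fixed point of $f$ in $U$, but an irrational rotation of an annulus has none. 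The paper takes yet a third route: $f|_U$ preserves the unique closed geodesic of the annulus, so $g$ preserves its lift, which is the axis joining the two limit points.

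For (d), your Fatou-coordinate sketch for double parabolicity is essentially what \cite{DM91} does and is fine in outline (the paper simply cites that reference). Be careful with the recurrence claim, however: double parabolicity alone does \emph{not} imply recurrence of $g^*$ (cf.\ Remark~\ref{remark-dp-Baker-domains}), and nothing in Theorem~\ref{thm-ergodic-properties} yields it. Recurrence for parabolic basins is a specific theorem in \cite{DM91} using more than the Cowen type, so it should be cited as such rather than presented as a general consequence of double parabolicity.
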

\begin{proof}[Proof of Proposition \ref{prop-inner-rational}]
	According to Lemma \ref{lemma-sing-limit-sets}, $ f|_U $ has no critical points if and only if $ g $ is a Möbius transformation. Hence, $g$ is a disk automorphism if and only if $U$ is either a Siegel disk or a Herman ring (since basins always contain a singular value). 
	
	By definition, if $U$ is a Siegel disk, then $U$ is simply connected and $ f|_U $ is conjugate to  an irrational rotation.  We are left to check that the Möbius transformation associated to a Herman ring must be hyperbolic. By definition, if $U$ is a Herman ring, then $ U $ is doubly connected and can be uniformized to a non-degenerated round annulus. Thus, the group of deck transformations of $\pi$ has a  unique (hyperbolic) generator, and the limit set $\Lambda$ consists of two points. 
  Since $f|_U$  is conjugate to  an irrational rotation, $ f|_U $ preserves the (unique) hyperbolic geodesic $\gamma\subset U$, and so $g$ must preserve the geodesic $\tilde\gamma = \pi^{-1}(\gamma)$, which is a curve in $\mathbb{D}$ joining the two points in the limit set. Thus $g$ has two fixed points on $\partial \mathbb{D}$, so it is hyperbolic (compare also with the explanation in Sect. \ref{section-herman-ring}).

In (c) and (d), it is now clear that $g$ must be a non-univalent inner function. By Lemma \ref{lem:fp} (to be proved below), $g$ has a fixed point in $\DD$ if and only if $f|_U$ has a fixed point, and is therefore a basin of attraction. Inner functions associated with parabolic basins are always doubly parabolic, and their radial extension is always recurrent \cite[Thm. I]{DM91}. 

We are left to prove the claim about the degree of $g$ when $ f|_U $ is proper. If $U$ is multiply connected, then given any critical point $c\in U$ (which exists, since $U$ is an attracting or parabolic basin), any of the infinitely many points in $\pi^{-1}(c)$ is a critical point of $g$, and so $g$ has infinite degree. The converse is also clear, since if $U$ is simply connected then $g$ and $f|_U$ are conjugate by the Riemann map, and so have the same degree ($f|_U$ is proper, so it has necessarily finite degree).
\end{proof}

We finish this section with the following promised lemma.\footnote{The following result may be well-known (see \cite[p. 23]{DM91}), but we include its proof since we were unable to find an explicit reference for it.}

\begin{lemma}{\bf (Fixed points)}\label{lem:fp}
	Let $ U\subset\Chat $ be a hyperbolic domain, and let $ f\colon U\to U $ holomorphic. Consider $ \pi\colon\DD\to U $ a universal covering map, and let $ g\colon\mathbb{D}\to\mathbb{D} $ be such that $ \pi \circ g= f\circ \pi$. Then, 
	$ f|_U $ has a fixed point if and only if $ g|_\DD $ has a fixed point.
\end{lemma}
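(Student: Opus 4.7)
The proof splits naturally into the two implications of the biconditional. The $(\Leftarrow)$ direction is a one-line calculation: if $g(w_0)=w_0$ for some $w_0\in\DD$, then $f(\pi(w_0))=\pi(g(w_0))=\pi(w_0)$, so $\pi(w_0)\in U$ is a fixed point of $f|_U$.

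For $(\Rightarrow)$, I would use universal covering theory to exhibit an explicit fixed point of a lift. Suppose $f$ has a fixed point $p\in U$, and pick $z_0\in\pi^{-1}(p)$. Since $\pi$ is a local biholomorphism at $z_0$, there is a single-valued local inverse $\sigma$ of $\pi$ in a neighbourhood $W$ of $p$ with $\sigma(p)=z_0$. The composition $\tilde g:=\sigma\circ f\circ\pi$ is then a holomorphic function defined near $z_0$ that fixes $z_0$ and satisfies $\pi\circ\tilde g=f\circ\pi$ locally. By the monodromy theorem, applied to the simply connected disk, $\tilde g$ extends uniquely to a global holomorphic map $\tilde g\colon\DD\to\DD$ with $\pi\circ\tilde g=f\circ\pi$ and $\tilde g(z_0)=z_0$, thus producing a lift of $f$ with a fixed point in $\DD$.

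The remaining subtlety is that the given $g$ in the statement of the lemma may not equal $\tilde g$, but differs from it by post-composition with some deck transformation, $g=T\circ\tilde g$. To show that $g$ itself has a fixed point, I would apply the Denjoy--Wolff theorem to $g$: unless $g$ is an elliptic disk automorphism (which automatically has a fixed point in $\DD$), its iterates converge locally uniformly on $\DD$ to a single point $c\in\overline\DD$. Using that $\pi\circ g^n=f^n\circ\pi$ and that the orbit $\{g^n(z_0)\}$ lies in the discrete fibre $\pi^{-1}(p)$, any subsequential limit of $g^n$ in the interior of $\DD$ must be a constant valued in $\pi^{-1}(p)$. The main obstacle is ruling out $c\in\partial\DD$; this can be handled using the Schwarz--Pick identity $|g'(z_0)|_{\mathrm{hyp}}=|f'(p)|$ (a calculation independent of the choice of lift) together with the discreteness of $\pi^{-1}(p)$ to prevent the orbit of $z_0$ from escaping to $\partial \DD$, forcing $c\in\pi^{-1}(p)\subset\DD$ and yielding a fixed point of $g$.
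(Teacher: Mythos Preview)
Your $(\Leftarrow)$ coincides with the paper's. For $(\Rightarrow)$ your route is different: the paper argues by contradiction using an \emph{absorbing disk} around the fixed point (its lifts to $\DD$ are countably many disjoint disks, none of which can be $g$-forward-invariant if $g$ has no fixed point, and from this one manufactures a compact set in $U$ whose $f$-orbit never enters the disk, contradicting the absorbing property), whereas you track the orbit $\{g^n(z_0)\}_n$ inside the fibre $\pi^{-1}(p)$.

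Your final step, however, is a genuine gap. The derivative identity $|g'(z_0)|_{\mathrm{hyp}}=|f'(p)|$ at the single non-fixed point $z_0$, together with discreteness of $\pi^{-1}(p)$, does \emph{not} stop $g^n(z_0)$ from escaping to $\partial\DD$: discrete subsets of $\DD$ typically accumulate on $\partial\DD$, and a derivative bound at one point gives no global control on the orbit. Concretely, take $f=\mathrm{id}$ on an annulus and $g$ a generator of the deck group; then $g^n(z_0)\in\pi^{-1}(p)$ for all $n$ yet $g^n(z_0)\to\partial\DD$, so without excluding the case $f\in\mathrm{Aut}(U)$ (an assumption equally implicit in the paper's absorbing-disk argument) the conclusion is simply false. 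When $g$ is not an automorphism, a valid completion of your idea is: set $d_n:=d_\DD\bigl(g^n(z_0),g^{n+1}(z_0)\bigr)$; Schwarz--Pick forces $(d_n)_n$ to be strictly decreasing, while each $d_n$ equals $d_\DD(z_0,w)$ for some $w\in\pi^{-1}(p)$, and proper discontinuity of the deck group leaves only finitely many such values below $d_0$; hence some $d_n=0$, yielding the fixed point. But this is not the argument you supplied.
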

\begin{proof}
	The right-to-left implication is immediate. Indeed, assume $ g|_\DD $ has a fixed point, say $ p $. Then, $$ \pi(p) =\pi(g(p))=f(\pi(p)),$$ so $z= \pi(p)\in U $ is a fixed point for $ f|_U $.
	
	For the left-to-right implication, assume $ f|_U $ has a fixed point, say $ z \in U$. Let us prove that there exists $ q\in\pi^{-1}(z) $ such that $ g(q)=q $. Assume, on the contrary, that there is no fixed point for $ g $ in $ \pi^{-1}(z) $. In this case, $ g|_\DD $ has no fixed point at all (if it had another, $f|_U$ would have two distinct fixed points, which is a contradiction). By the Denjoy-Wolff Theorem, there exists $ p\in\partial\DD $ (the Denjoy-Wolff point) towards which orbits converge uniformly on compact subsets of $ \DD $.
	
	Now, take $ r >0$ small enough so that the disk $ D(z,r)\subset U $ is forward invariant under $ f $. Such a disk is {\em absorbing} for $ f|_U $, i.e. for every compact set $ K\subset U $, there exists $ n \geq 0$ such that $ f^n(K)\subset D(z,r) $. Note that $ \pi^{-1}(D(z,r)) $ consists of countably many topological disks in $ \DD $, each of them compactly contained in $ \DD $. It follows from the commutative relation $ \pi\circ g=f\circ \pi $ that such disks accumulate at the Denjoy-Wolff point $ p\in\partial \DD $, and none of the disks is forward invariant under $ g $. Thus, take $w\in \pi^{-1}(D(z,r)) $ and $ n\geq 0 $ so that $ g^n(w) $ lies in another connected component of $ \pi^{-1}(D(z,r)) $. Join these two points by a curve $ \gamma $, and consider the curves \[\Gamma_k\coloneqq \bigcup\limits_{N>k}g^{n\cdot N}(\gamma),\] which is a family of forward invariant curves landing at $ p $. Note that  $\Gamma_k\not\subset \pi^{-1}(D(z,r)) $. Since $ \pi\circ g=f\circ \pi $, \[\pi(\Gamma_k)= \bigcup\limits_{N>k}\pi(g^{n\cdot N}(\gamma))=\bigcup\limits_{N>k}f^{n\cdot N}(\pi(\gamma))\not\subset D(z,r)\]
	Thus, $ \pi(\gamma)\subset U $ is compact, but there does not exist $ n\geq 0 $ such that $ f^n(\pi(\gamma))\subset D(z,r) $. 
	This is a contradiction, implying that there exists $ q\in\pi^{-1}(z) $ such that $ g(q)=q $, as desired. See Figure \ref{fig-fixedpoints}.
\end{proof}

	\begin{figure}[h]\centering
	\includegraphics[width=15cm]{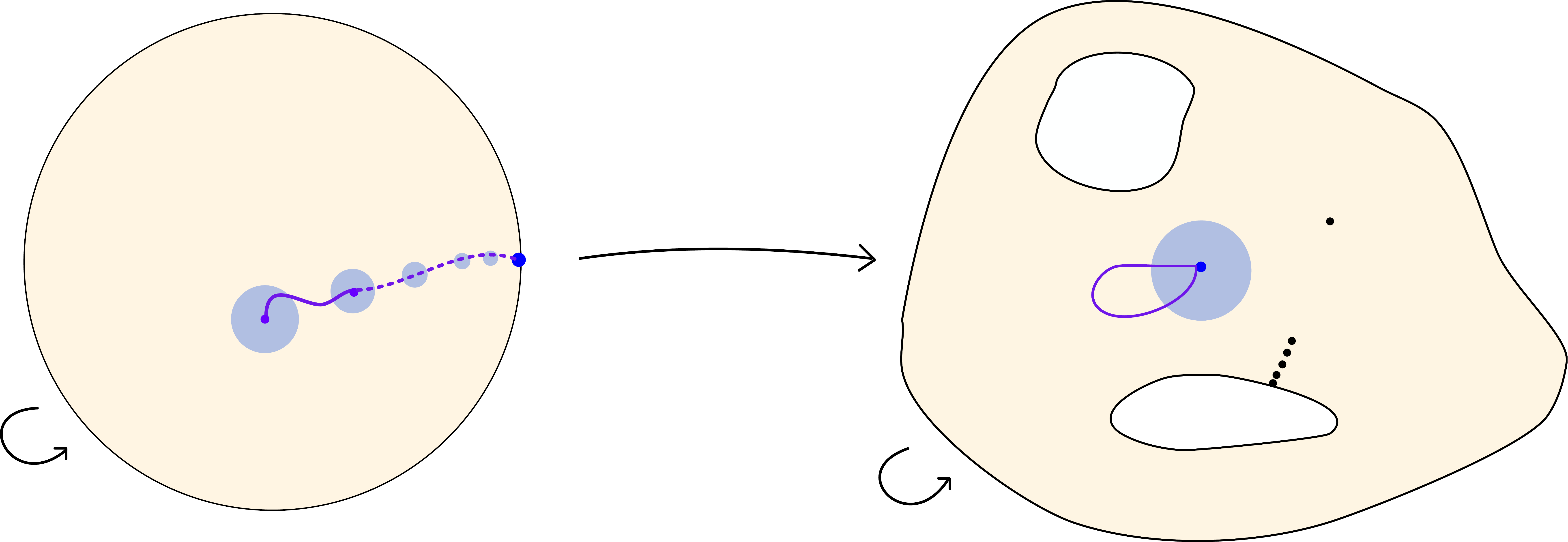}
	\setlength{\unitlength}{15cm}
	\put(-0.55,0.195){$ \pi $}
		\put(-0.66,0.18){$ p$}
				\put(-0.81,0.16){$ \gamma$}
								\put(-0.355,0.16){$ \pi(\gamma)$}
			\put(-0.23,0.18){$ z $}
		\put(-0.46,0.03){$ f $}
			\put(-1,0.03){$ g $}
				\put(-0.11,0.3){$ U $}
			\put(-0.97,0.3){$ \DD $}
	\caption{\footnotesize Sketch of the
		 construction in Lemma \ref{lem:fp}. Following the notation of the Lemma, $ z\in U $ represents the fixed point in $U$, and the blue disk around it, its absorbing domain. In the unit disk, the lift of such absorbing domain, accumulating at the Denjoy-Wolff point $p\in\partial\mathbb{D}$. }\label{fig-fixedpoints}
\end{figure}

\subsection{Proof of Theorem \ref{thmB}} We are working under the assumptions of Setting \ref{settingProper}.  
\begin{enumerate}[label={(\alph*)}]
	\item We have to see that the limit set  $ \Lambda$ is  $\partial\mathbb{D} $. According to Corollary \ref{cor-invariance-limit-sets-proper}, $ (g^*)^{-1}(\Lambda)=\Lambda $. By Proposition \ref{prop-inner-rational} combined with Theorem \ref{thm-ergodic-properties}, $ g^* $ is ergodic with respect to the Lebesgue measure $\lambda$, so $ \lambda (\Lambda)=0 $ (and hence $ \Lambda $ is a Cantor set of zero measure), or $ \lambda (\Lambda)=1 $ (and hence $ \Lambda =\partial\mathbb{D}$). We have to see that the first case cannot happen. Assume to the contrary that $ \Lambda $ is a Cantor set, and let $I$ be an open circular interval in $ \partial\mathbb{D}\smallsetminus \Lambda $. By Lemma \ref{lemma-elliptic-inner}, $ (g^*)^n (I) =\partial\mathbb{D}$ for some $n\geq 0$. This would imply that some points in $ \partial\mathbb{D}\smallsetminus \Lambda $ are mapped to $ \Lambda $ -- a contradiction.
	\item By Proposition \ref{prop-inner-rational}, $ g $ has infinite degree. By Lemma \ref{lemma-sing-limit-sets}, $\Lambda\subset E(g)$, thus $ E(g) =\partial\mathbb{D}$.
	\item Let us see first that, if  $ \left\lbrace \pi(t\xi)\colon\xi\in \left[ 0,1\right) \right\rbrace  $ is not compactly contained within $ U $, then $ (g^*)^n(\xi) $ is well-defined  for all $ n\geq 0 $. Indeed, if  $ \left\lbrace \pi(t\xi)\colon\xi\in \left[ 0,1\right) \right\rbrace  $ is not compactly contained in $ U $, $ \xi $ is either of escaping or  bungee type. By Lemma \ref{lemma-lifts-and-limit-sets-proper}, $g^*(\xi)$ is well-defined, and it is again  of escaping or  bungee type. Proceeding by induction, we see that $ (g^*)^n(\xi) $ is well-defined  for all $ n\geq 0 $.
	
	For the converse, assume $ \left\lbrace \pi(t\xi)\colon\xi\in \left[ 0,1\right) \right\rbrace  $ is  compactly contained within $ U $, and let us see that $ (g^*)^n(\xi) $ is eventually not well-defined.  If $ \xi$ is of bounded type, then $ \xi  $ belongs to the non-tangential limit set of some finitely generated subgroup of the group of  deck transformations $ \Gamma $ \cite[Summary 4.15]{FerreiraJove}, whose generators correspond to the closed loops $ \sigma_1, \dots, \sigma_m $. 
	
	Assume on the contrary, that $ (g^*)^n (\xi)$ exists for all $ n\geq0 $. By the commutative relation $ f\circ \pi=\pi\circ g $, $ (g^*)^n (\xi)$ belongs to the non-tangential limit set of the subgroup generated by $ f^n(\sigma_1), \dots, f^n(\sigma_m) $. However, since for these Fatou components there exists a simply connected absorbing domain \cite{BFJK_AbsorbingSets}, there exists $ N\geq 1 $ such that $ f^N(\sigma_1), \dots, f^N(\sigma_m) $ are contained in the absorbing domain, and hence contractible. The subgroup generated by them is, therefore, trivial, and has an empty limit set -- a contradiction.
\end{enumerate}
Thus, Theorem \ref{thmB} is proved. \hfill $\square$

\section{Transcendental functions with pathological boundaries}\label{section-pathological-boundaries}
We consider here three examples of doubly connected Fatou components for functions in class $\mathbb{K}$. In contrast with rational maps, doubly connected Fatou components may not be Herman rings, and hence exhibit different boundary dynamics, which may be considered as pathological in view of Theorem \ref{thmB}. We discuss these pathological behaviours next.

\subsection{Herman rings}\label{section-herman-ring} Herman rings are, by definition, doubly connected invariant Fatou components in which $f$ acts as an irrational rotation.  Inner functions associated with Herman rings  are always hyperbolic Möbius transformations (Prop. \ref{prop-inner-rational}, compare with Fig. \ref{fig-HermanRing}).

\begin{figure}[h]\centering
	\includegraphics[width=8cm]{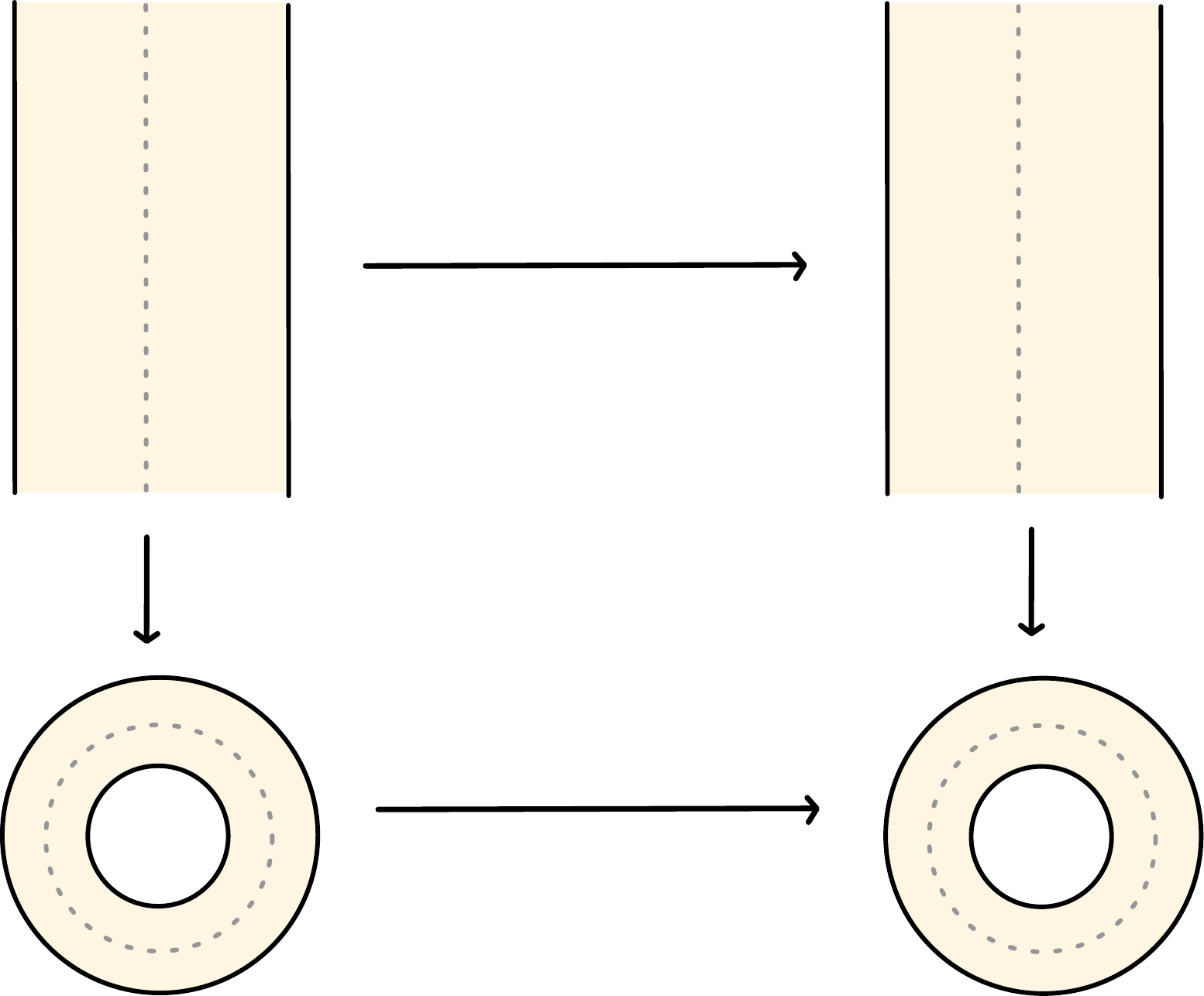
	}
	\setlength{\unitlength}{8cm}
	\put(-1.03, 0.77){$ S $}
		\put(-0.02, 0.77){$ S $}
			\put(-1.03, 0.2){$ \mathcal{A} $}
		\put(-0.02, 0.2){$ \mathcal{A}$}
			\put(-0.96, 0.33){$ \exp $}
		\put(-0.12, 0.33){$ \exp$}
				\put(-0.6, 0.62){$ z\mapsto e^{i\theta}z $}
		\put(-0.6, 0.17){$ z\mapsto z+\theta i$}
	\caption{\footnotesize Sketch of a Herman ring and its universal covering map, giving rise to its associated inner function, for some $\theta\in \mathbb{R}\smallsetminus\mathbb{Q}$.}\label{fig-HermanRing}
\end{figure}

Concerning the boundary dynamics from a measure-theoretical point of view, Herman rings exhibit a surprising behaviour. Indeed, when restricted to one boundary component, the map is ergodic and recurrent, but not exact (since it is an irrational rotation). However, for the associated inner function, the map is non-ergodic and non-recurrent (even when restricted to the preimage of one boundary component).

We note that, if the radial extension of the associated inner function is exact (resp. ergodic, recurrent), the same holds for the  boundary map on the Fatou component. Hence, Herman rings show that the converse is not true in general.

We shall emphasize that, in the particular case of simply connected Fatou components, the ergodic properties of the radial extension of the associated  inner function transfer to the boundary map on the Fatou component \cite[Thm. A]{Jov24-boundaries}. Hence, the relationship between the ergodic properties of the radial extension of the associated  inner function and the boundary map on the Fatou component is less transparent in the multiply connected case, and worth of study.

\subsection{Doubly connected wandering domains of entire functions}\label{subsect-doubly-connWD} It is well-known that entire functions may have doubly connected wandering domains. More precisely, if $U$ is a doubly connected wandering domain of an entire function, then $ U_n=f^n(U) $ is doubly connected for all $n\geq 0$ (see e.g. \cite{KS08}). Then,  $ f|_{U_n} $ is proper and with no critical values. Indeed, $ f|_{U_n} $  is conjugate to a power map $ z\mapsto z^d $, and the associated inner function can be taken to be the identity
(see e.g. \cite[Sect. 3]{Ferreira_MCWD1} and compare with Fig. \ref{fig-DCWanderingDomain}).
Since $ f|_{U_n} $  is conjugate to $ z\mapsto z^d $, $ f $ is exact when restricted to each boundary component (even though the associated inner function is clearly non-exact).

\begin{figure}[h]\centering
	\includegraphics[width=8cm]{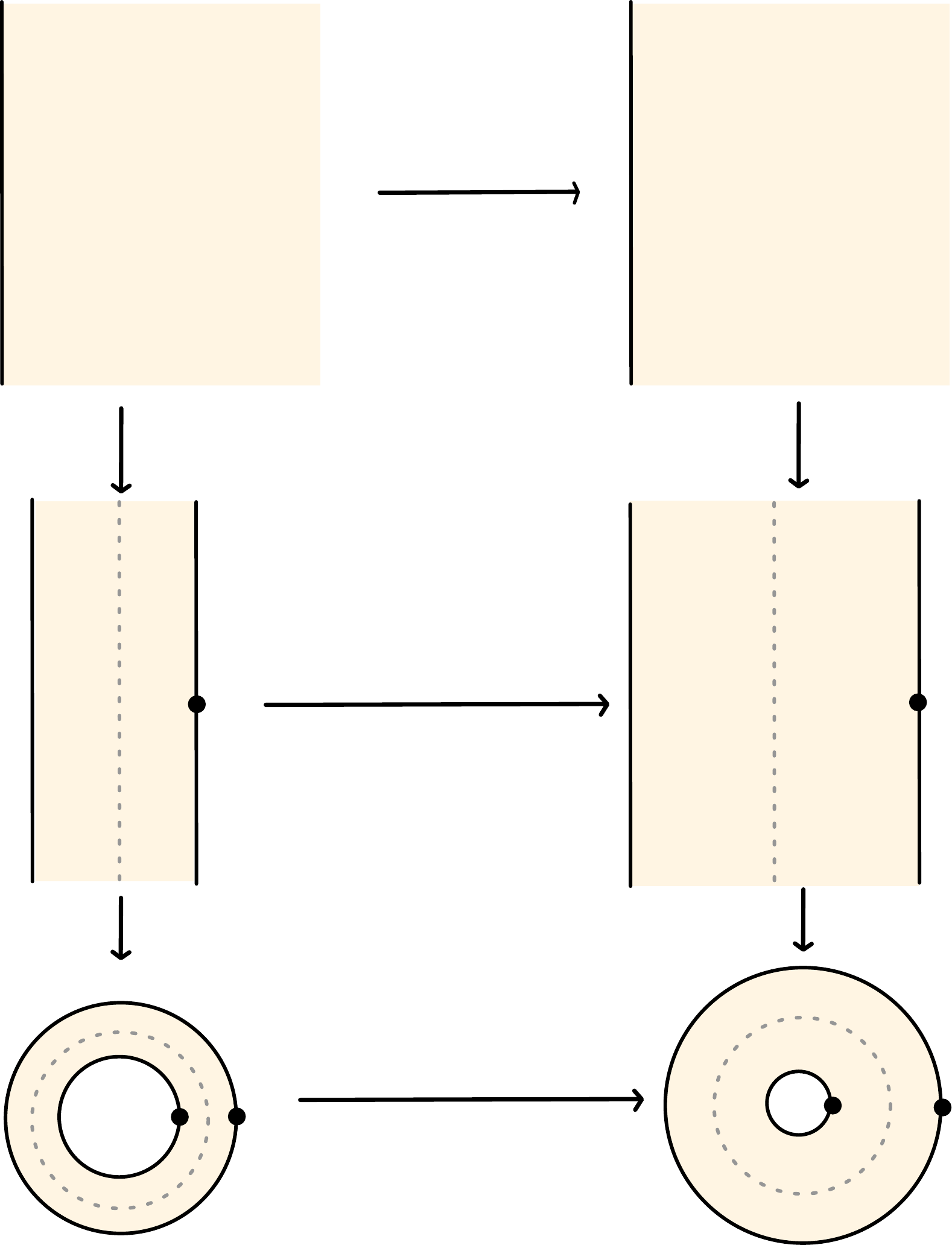}
	\setlength{\unitlength}{8cm}
		\put(-1.01, 0.75){$ S $}
	\put(-0.02, 0.75){$ S $}
			\put(-1.04, 1.25){$ \mathbb{H} $}
	\put(-0.04, 1.25){$ \mathbb{H}$}
	\put(-1.03, 0.2){$ \mathcal{A} $}
	\put(-0.02, 0.2){$ \mathcal{A}$}
	\put(-0.96, 0.33){$ \exp $}
	\put(-0.12, 0.33){$ \exp$}
		\put(-0.87, 0.56){\tiny $ \ln R $}
	\put(-0.02, 0.56){\tiny $ 2\ln R$}
			\put(-0.74, 0.12){\tiny $ R $}
	\put(-0.0, 0.12){\tiny $  R^2$}
		\put(-0.89, 0.12){\tiny $1/ R $}
	\put(-0.12, 0.12){\tiny $ 1/ R^2$}
		\put(-1.11, 0.83){$ z\mapsto e^{\frac{i\pi}{2\ln R}z} $}
	\put(-0.14, 0.83){$ z\mapsto e^{\frac{i\pi}{\ln R}z} $}
	\put(-0.6, 0.58){$ z\mapsto 2z $}
		\put(-0.53, 1.12){$  \textrm{id}$}
	\put(-0.6, 0.17){$ z\mapsto z^2$}
	\caption{\footnotesize Skecht of a doubly connected wandering domain of an entire function, and its associated inner function.}\label{fig-DCWanderingDomain}
\end{figure}

We shall emphasize that for both Herman rings and doubly connected wandering domains the associated inner function is a conformal automorphism of $\mathbb{D}$ (and hence $ f $ acts as a local hyperbolic isometry in their interior). However, the boundary map of a Herman ring is non-exact (even when restricted to a boundary component), while for doubly connected wandering domains it is exact when restricted to a boundary component.
\subsection{Baker's doubly connected basin of attraction}\label{subsect-baker} Consider the self-map of $ \mathbb{C}^* $ \[ f(z)= e^{\alpha z-\alpha/z}, \hspace{0.5cm} \alpha\in (0, 1/2),\] considered by Baker in \cite{Baker-puncturedplane}.
For this map, $ \mathcal{F}(f) $ consists of a unique Fatou component, which is the  basin of attraction of 1. The points 0 and $\infty$ are the only asympotic values of $ f $ (which are also the essential singularities of $ f $), $i$ and $-i$ are its critical values, contained in the basin.

For this specific example, we can compute the associated inner function explicitly. Indeed, $f(z)= e^{\alpha z-\alpha/z}$ is semiconjugate to $ F(z)= 2\alpha \sin z$ by $ z\mapsto e^{iz}$, that is
	\[\begin{tikzcd}[row sep=1cm, column sep = 2 cm]
	\mathbb{C} \arrow{r}{F(z)= 2\alpha \sin z} \arrow[swap]{d}{z\mapsto e^{iz}}&  	\mathbb{C} \arrow{d}{z\mapsto e^{iz}} \\	
		\mathbb{C}^* \arrow{r}{f(z)= e^{\alpha z-\alpha/z}}& 	\mathbb{C}^*
\end{tikzcd}
\] 
The Fatou set of $F$ is connected and consists of the immediate basin of attraction of 0, say $V$. The associated inner function for $F|_V$ has been computed explicitly in \cite[Thm. 1.8]{EvdoridouVasiliki2020FA}. Indeed, let $ \varphi\colon \mathbb{D}\to V $ be the Riemann map with $ \varphi(0)=0 $ and $ \varphi'(0)>0 $, then the associated inner function is the infinite Blaschke product $ B\colon\mathbb{D}\to\mathbb{D} $, \[ B(z)=z\prod_{n\geq 1}\frac{a_n^2-z^2}{1-a^2_n z^2}, \hspace{0.8cm} a_n=\frac{\tau^n-1}{\tau^n+1},\]where $\tau$ is uniquely determined by $\alpha$.
Note that $ \pi\colon\mathbb{D}\to U $ defined as $ \pi(z)=e^{i\varphi(z)} $ is a universal covering map of $U$. Hence, $ B $ is also the inner function associated with $f|_U$, as the following commutative diagram shows.

	\[\begin{tikzcd}[row sep=1cm, column sep = 2 cm]
	\mathbb{D} \arrow{r}{B} \arrow[swap]{d}{ \varphi}\arrow[dd, shift right=5, bend right=30, swap, "\pi"]&  	\mathbb{D} \arrow{d}{\varphi}\arrow[dd, shift left=5, bend left=30, "\pi"] \\	
	V \arrow{r}{F(z)= 2\alpha \sin z}\arrow[swap]{d}{ e^{iz}}& V \arrow{d}{e^{iz}} 
	\\	
	U \arrow{r}{f(z)= e^{\alpha z-\alpha/z}}& U
\end{tikzcd}
\] 

Note that 1 and $-1$ are the singularities of the inner function, which coincide with the limit set of $ \pi $. Compare with Figure \ref{fig-Baker}.

\begin{figure}[!h]\centering	\captionsetup[subfigure]{labelformat=empty,justification=centering}
	\begin{subfigure}{0.45\textwidth}
	\includegraphics[width=\textwidth]{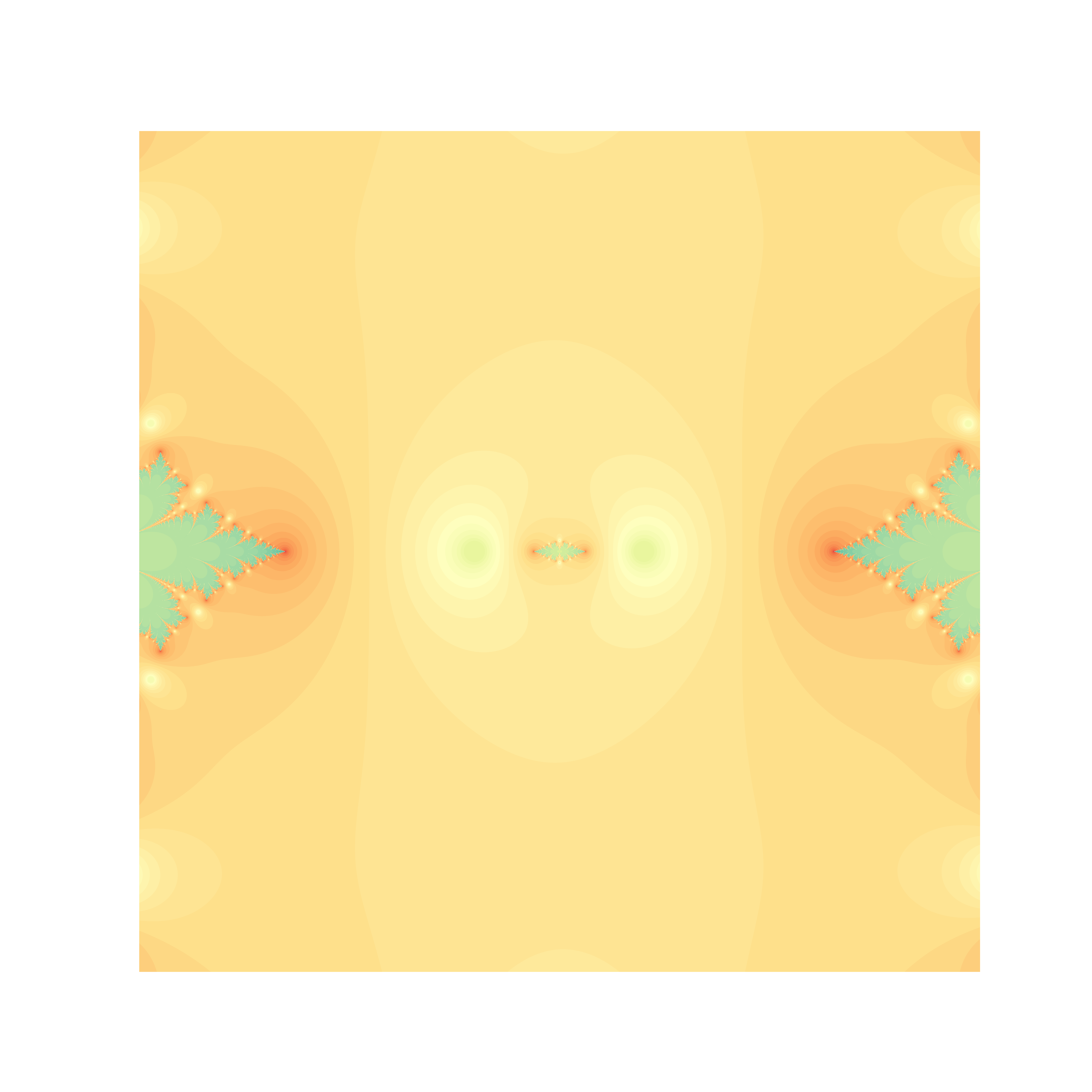}
		\caption{Dynamical plane of Baker's example with $\alpha = 0.4$.}
	\end{subfigure}
	\begin{subfigure}{0.45\textwidth}
	\includegraphics[width=\textwidth]{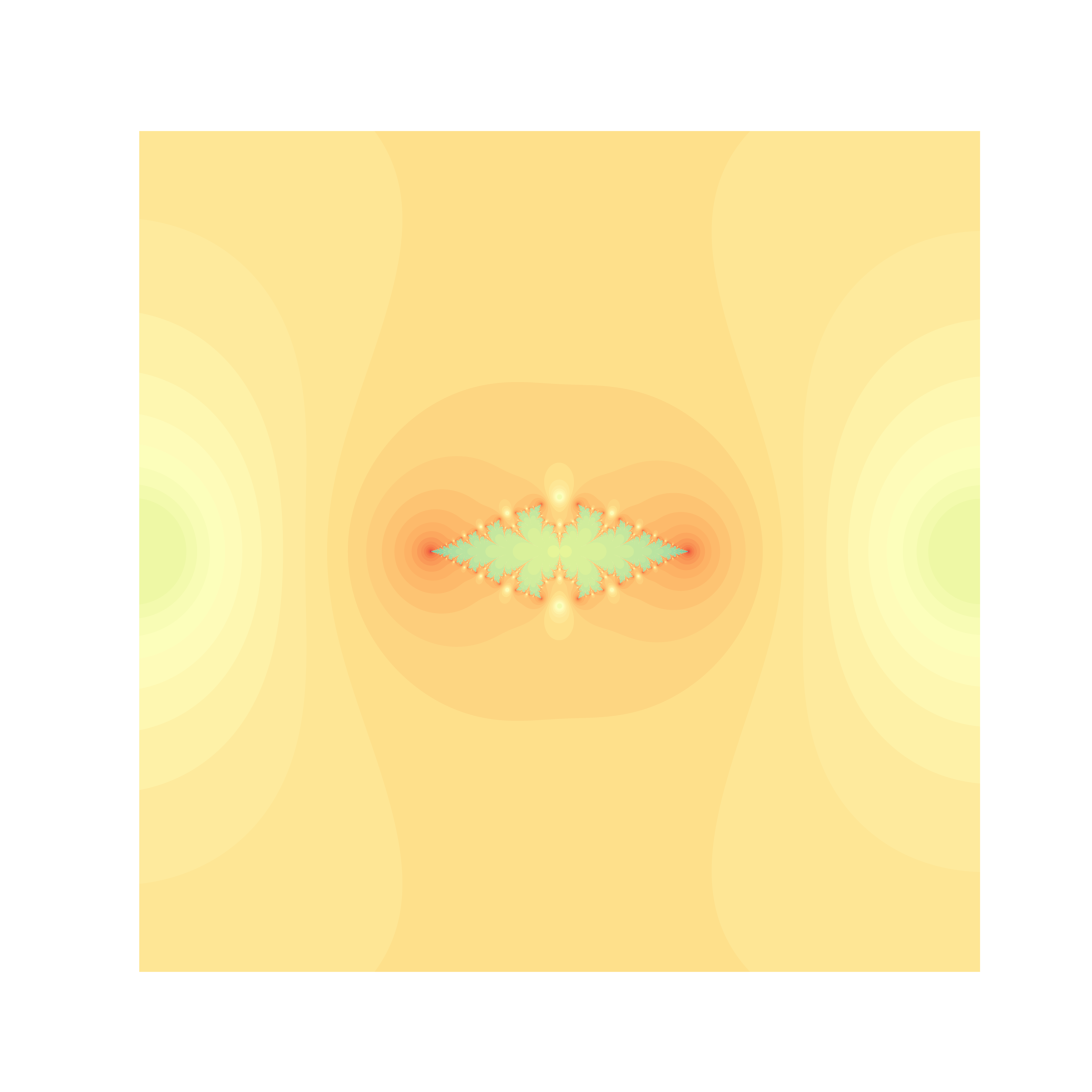}
		\caption{\footnotesize Enlarged version of the Cantor bouquet around zero in the previous figure.}
	\end{subfigure}

	\begin{subfigure}{0.85\textwidth}
		\includegraphics[width=\textwidth]{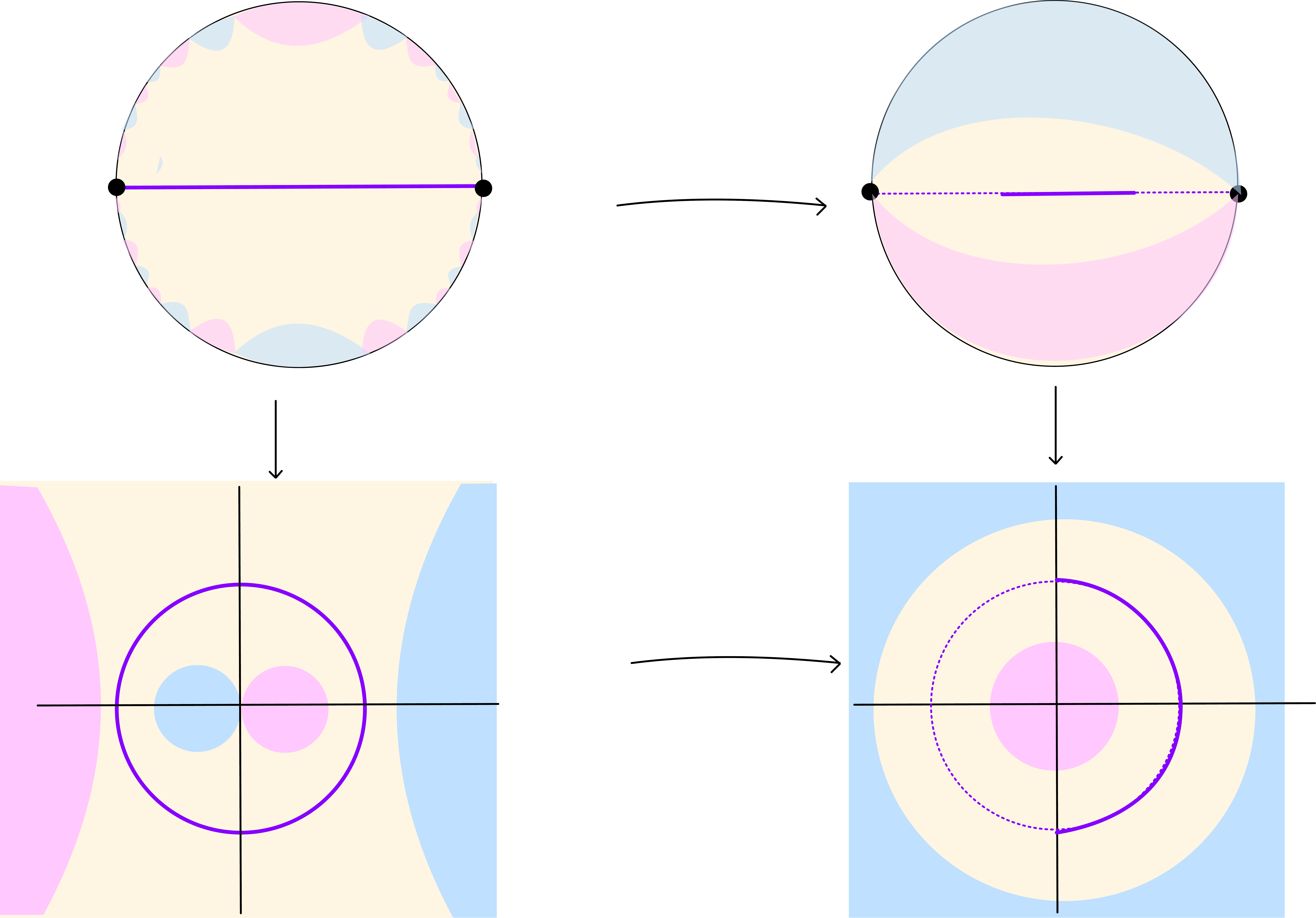}
			\setlength{\unitlength}{\textwidth}
		\put(-0.9, 0.65){$ \DD$}
		\put(-0.09, 0.65){$\DD  $}
		\put(-1.03, 0.3){$ U $}
		\put(-0.02, 0.3){$ U$}
		\put(-0.82, 0.36){$ \pi$}
		\put(-0.19, 0.36){$ \pi$}
		\put(-0.48, 0.55){$ B $}
		\put(-0.48, 0.2){$f$}
		\caption{\footnotesize  Sketch of the dynamics in Baker's doubly connected basin of attraction, together with the mapping properties of the associated inner function.}
	\end{subfigure}
	\caption{\footnotesize }\label{fig-Baker}
\end{figure}


Note that, in contrast with rational maps, the limit set is not $\partial\mathbb{D}$. Indeed, due to the presence of essential singularities on the boundary of $U$, ergodicity and recurrence are  compatible with having regular points for $ \pi $. More precisely, the proof of Theorem \ref{thmB} showing that $ \Lambda= \partial\mathbb{D}$ relied on the fact that the image of a connected component of the boundary is again connected. Now, due to the presence of essential singularities on $ \widehat{\partial} U $, this is no longer true: the image of a true crosscut neighbourhood eventually covers the whole boundary (compare again with Figure \ref{fig-Baker}).

Finally, we should remark that the example of a doubly connected  basin of attraction of $$ f_{\alpha, \lambda}(z)=ze^{\alpha(z+1/z)+\lambda}, $$ first studied by Keen \cite{Keen}, presents similar dynamics and the same pathlogical features of the associated inner function. We shall not develop this example in detail here, since the ideas are the same as above and the computations more involved.

\section{Multiply connected basins in class $\mathbb{K}$. Theorem \ref{thmC}}\label{section-basins-trans}

In this Section we shall prove the following version of Theorem \ref{thmC}, which is more complete than the one stated in the introduction.
\begin{setting}\label{settingInvK} Let $ f\in\mathbb{K} $, and let $ U$ be an attracting or parabolic basin. Assume $ U $ is multiply connected.
	Consider  $\pi\coloneqq \pi_U\colon\DD\to U $ a universal covering map, and let $ g\colon\mathbb{D}\to\mathbb{D} $ be such that $ \pi \circ g= f\circ \pi$.
\end{setting}
\begin{thm}{\bf (Theorem C- general version)}\label{thmC-general-version}
	In the Setting \ref{settingInvK}, the following hold true.
		\begin{enumerate}[label={\normalfont(\alph*)}]
		\item\label{thmC-a} The associated inner function $ g $ has infinite degree, and $ \Lambda\subset E(g) $. Moreover, if $ U $ is of infinite connectivity, then $ E(g) $ is uncountable.
		\item\label{thmC-b} If $ \xi\in\partial\mathbb{D}$ is of bounded type, there exists $ n\geq0 $ such that $ (g^*)^n(\xi) $ is not well-defined. 
			\item\label{thmC-c} $ f|_{\partial U} $ is exact (and, in particular, ergodic) and recurrent. Moreover, for $ \omega_U $-almost every $ x\in\partial U $, $ \left\lbrace f^n(x)\right\rbrace _n $ is dense on $ \partial U $.
	\end{enumerate}
Moreover, assume $ SV(f|_U) $ is compactly contained in $U$, then
\begin{enumerate}[label={\normalfont(\alph*)}]
  \setcounter{enumi}{3}
	\item\label{thmC-d} either $ \lambda (E(g))=  \lambda (\Lambda)=0 $, or $ E(g)=\Lambda=\partial\mathbb{D} $.
\end{enumerate}

\end{thm}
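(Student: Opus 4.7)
The plan is to address the four claims in order, invoking the machinery of Section \ref{section-main-construction} and the ergodic classification of inner functions. For (a), both assertions come directly from Lemma \ref{lemma-sing-limit-sets}: since $U$ is a basin, $SV(f|_U)\neq\emptyset$, so parts \ref{lemma-singularities-b} and \ref{lemma-singularities-c} give that $g$ has infinite degree and that $\Lambda\subset E(g)$. When $U$ is infinitely connected, the deck transformation group is non-elementary and infinitely generated, hence $\Lambda$ is a perfect subset of $\partial\mathbb{D}$ by classical Fuchsian group theory, and in particular uncountable; $\Lambda\subset E(g)$ then makes $E(g)$ uncountable as well. For (b), I reproduce the argument from the proof of Theorem \ref{thmB}\ref{thmB-itemc}: if $\xi\in\partial\mathbb{D}$ were of bounded type with $(g^*)^n(\xi)$ defined for every $n$, the semiconjugacy $f\circ\pi=\pi\circ g$ would place $(g^*)^n(\xi)$ in the non-tangential limit set of the subgroup of $\Gamma$ generated by the lifts of $f^n(\sigma_1),\dots,f^n(\sigma_m)$, where $\sigma_1,\dots,\sigma_m\subset U$ are loops attached to $\xi$ via \cite[Summary 4.15]{FerreiraJove}; since basins in class $\mathbb{K}$ admit a simply connected absorbing domain \cite{BFJK_AbsorbingSets}, for $n$ large enough every $f^n(\sigma_i)$ becomes contractible there, the resulting subgroup is trivial, and its limit set is empty---contradiction.

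For (c), Proposition \ref{prop-inner-rational} classifies $g$ as elliptic (attracting case, in which choosing $\pi$ with $\pi(0)$ equal to the attracting fixed point yields $g(0)=0$) or doubly parabolic (parabolic case). In either case Theorem \ref{thm-ergodic-properties}\ref{ergodic-properties-a} gives exactness of $g^*$, and recurrence follows from Theorem \ref{thm-ergodic-properties}\ref{ergodic-properties-c} in the elliptic case and from Proposition \ref{prop-inner-rational}(d) in the parabolic one. These ergodic properties transfer to $f|_{\partial U}$ via the semiconjugacy $\pi^*\circ g^*=f\circ\pi^*$ (Lemma \ref{lemma-radial-limits}) and the push-forward $\omega_U=(\pi^*)_*\lambda$ (Theorem \ref{thmA}): a tail-invariant $A\subset\partial U$ for $f$ pulls back to a tail-invariant subset of $\partial\mathbb{D}$ for $g^*$, and exactness of $g^*$ forces $\omega_U(A)\in\{0,1\}$; density of $f$-orbits for $\omega_U$-a.e.\ $x\in\partial U$ follows by combining density of $g^*$-orbits (Theorem \ref{thm-ergodic-properties}\ref{ergodic-properties-b}) with $\mathrm{supp}(\omega_U)=\partial U$ (Theorem \ref{thmA}).

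Statement (d) is the most delicate. The crux is the dichotomy $\lambda(\Lambda)\in\{0,1\}$. Lemma \ref{lemma-lifts-and-limit-sets-SV}, available because $SV(f|_U)\Subset U$, gives $g^*(\Lambda)\subset\Lambda$, equivalently $\Lambda\subset(g^*)^{-1}(\Lambda)$; Proposition \ref{prop-invariance-limit-sets} combined with non-singularity of $g^*$ promotes this to $(g^*)^{-1}(\Lambda)=\Lambda$ modulo $\lambda$-null sets (in the elliptic case via the $\lambda$-invariance of $g^*$ from Theorem \ref{thm-ergodic-properties}\ref{ergodic-properties-c}; in the doubly parabolic case by passing to a $\sigma$-finite $g^*$-invariant measure $\mu\sim\lambda$ supplied by Hopf's theorem applied to the conservative non-singular $g^*$), and ergodicity of $g^*$ then forces $\lambda(\Lambda)\in\{0,1\}$. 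A case analysis concludes the argument: if $\lambda(\Lambda)=1$, the closedness of $\Lambda$ forces $\Lambda=\partial\mathbb{D}$ and hence $E(g)=\partial\mathbb{D}$ via $\Lambda\subset E(g)$; if $\lambda(\Lambda)=0$, Lemma \ref{lemma-lifts-and-limit-sets-SV} yields $(g^*)^{-1}(\partial\mathbb{D}\setminus\Lambda)\subset\partial\mathbb{D}\setminus E(g)$, and by non-singularity of $g^*$ this preimage has full $\lambda$-measure, so $\lambda(E(g))=0$. The main obstacle will be the upgrade to essential two-sided $g^*$-invariance of $\Lambda$ in the doubly parabolic case, where Lebesgue is not $g^*$-invariant; carefully exploiting the $\sigma$-finite invariant measure $\mu$ to transfer the ergodicity dichotomy back to $\lambda$---especially in the scenario where $\mu$ is infinite on both $\Lambda$ and its complement---is where the real technical work lies.
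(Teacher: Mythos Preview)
Your proposal is correct and follows essentially the same route as the paper for all four parts. One small correction: in (d), the $\sigma$-finite $g^*$-invariant measure $\mu\sim\lambda$ in the doubly parabolic case is not a consequence of any ``Hopf's theorem'' for conservative non-singular systems (no such general existence result holds); rather, it is the explicit measure $d\mu = |\xi - p|^{-2}\,d\lambda$ provided by Doering--Ma\~n\'e \cite[Thm.~C]{DM91}, which the paper invokes directly. With this in hand, the ``main obstacle'' you flag dissolves: since $\mu\sim\lambda$, the inclusion $\Lambda\subset (g^*)^{-1}(\Lambda)$ mod $\lambda$-null (from Lemma~\ref{lemma-lifts-and-limit-sets-SV} or Proposition~\ref{prop-invariance-limit-sets}) holds mod $\mu$-null as well, and one argues on the complement $\partial\mathbb{D}\setminus\Lambda$ (which has positive $\lambda$-measure, hence positive $\mu$-measure, whenever $\Lambda$ is a Cantor set) exactly as in the elliptic case. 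Your use of Lemma~\ref{lemma-lifts-and-limit-sets-SV} to get $(g^*)^{-1}(\partial\mathbb{D}\setminus\Lambda)\subset\partial\mathbb{D}\setminus E(g)$ in the final step is in fact slightly cleaner than the paper's route via $SV(g)\cap\partial\mathbb{D}\subset\Lambda$.
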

\begin{proof} We shall prove the statements separately.
	
	\begin{enumerate}[label={\normalfont(\alph*)}]

		\item The first statement is a reformulation of Lemma \ref{lemma-sing-limit-sets}{\em \ref{lemma-singularities-b}}, taking into account that, for the considered Fatou components, $ SV(f|_U)\neq \emptyset $. For the second statement, note that $ \Lambda\subset E(g) $ (Lemma \ref{lemma-sing-limit-sets}{\em \ref{lemma-singularities-c}}), and  if $ U $ is of infinite connectivity, $ \Lambda $ is uncountable.
		\item The proof is the same as in  Theorem \ref{thmB}{\em\ref{thmB-itemc}} (note that in this specific item we did not use the properness of $f|_U$).

			\item Since $U$ is an attracting or parabolic basin, then $ g^*|_{\partial\mathbb{D}} $ is exact (and, hence, ergodic), and recurrent (by Thm. \ref{thm-ergodic-properties} and Prop. \ref{prop-inner-rational}). Thus, $ f|_{\partial U} $ is exact, ergodic and recurrent. Ergodicity and recurrence already imply that $\omega_U$-almost every orbit is dense on $\textrm{supp } \omega_U$ (this is a standard fact; see e.g. \cite[Prop. 1.2.2]{Aaronson97}). Since $  \textrm{supp } \omega_U=\partial U$ (Thm. \ref{thmA}), we get the desired result.
	\end{enumerate}

Now we work under the assumption that $SV(f|_U)$ is compactly contained in $U$.
\begin{enumerate} [label={\normalfont(\alph*)}]
	\setcounter{enumi}{3}
\item Let us first show that, either $ \Lambda $  has zero Lebesgue measure, or $\Lambda=\partial \mathbb{D}$. 

Assume temporarily that $U$ is a basin of attraction, and choose $g$ so that $ g(0)=0 $. Then, the Lebesgue measure $\lambda$ is invariant under $g^*$. Assume  $\Lambda$ is a Cantor set, and let us prove that it has zero measure. By Proposition \ref{prop-invariance-limit-sets}, there exists a subset $ Z\subset \partial\mathbb{D}\smallsetminus\Lambda $ such that $$ \lambda (Z)=\lambda (\partial\mathbb{D}\smallsetminus\Lambda)=\lambda((g^* )^{-1}(Z)),$$ and $ (g^* )^{-1}(Z)\subset \partial\mathbb{D}\smallsetminus\Lambda$. Thus, $$ Z_\infty\coloneqq \bigcap\limits_{n\geq 0} (g^*)^{-n}(Z)$$ satisfies that $ (g^* )^{-1}(Z_\infty)=Z_\infty$ and $ \lambda(Z_\infty)=  \lambda (\partial\mathbb{D}\smallsetminus\Lambda)$. Since $ g^*|_{\partial \mathbb{D}} $ is ergodic and $ \lambda(Z_\infty)>0 $, this implies that $ \lambda(Z_\infty)=1 $. Then, $ \Lambda $ is a Cantor set of zero measure, as desired. 

	In the case when $U$ is a parabolic basin, then $\lambda$ is no longer invariant (since the Denjoy-Wolff point is in $\partial\mathbb{D}$). However, $ g^* $ admits an invariant $\sigma$-finite measure $\mu$ which is absolutely continuous with respect to $ \lambda $ \cite[Thm. C]{DM91}. For such a measure, one can repeat the previous procedure and prove that, if $\Lambda$ is a Cantor set, then it has zero measure. Hence, we proved that either $ \lambda(\Lambda) =0$, or $\Lambda=\partial \mathbb{D}$.

		Combining the fact that  $ \lambda(\Lambda)=0 $ or $ \Lambda=\partial\mathbb{D} $ with the inclusion $ \Lambda\subset E(g) $ (Lemma \ref{lemma-sing-limit-sets}{\em\ref{lemma-singularities-c}}), we are left to prove that, if $ \lambda(\Lambda)=0 $, then $ \lambda (E(g))=0 $.
	
	First note that, since $  SV(f|_{U})$ is compactly contained in $ U $ by assumption, it follows that $ SV(g)\cap \partial \mathbb{D}\subset \Lambda $ (and, hence, $ \lambda(SV(g)\cap \partial \mathbb{D})=0 $). Indeed, if $ \xi\notin \Lambda $, then there exists a crosscut neighbourhood $ N $ around $ \xi $ whose image under $ \pi $ is a simply connected crosscut neighbourhood in $ U $. Since $  SV(f|_{U})$ is compactly contained in $ U $, we can assume that $ \pi(N) $ does not contain singular values. Hence, $ N $ does not contain singular values for $ g $, and it is a regular value for $ g $ (by Prop. \ref{prop-SVinner}). 
	
	Therefore, $ \partial\mathbb{D}\smallsetminus\Lambda $ has full $ \lambda $-measure, and all inverse branches are well-defined in $ \partial\mathbb{D}\smallsetminus\Lambda $ and conformal. Thus, $ g^{-1}(\partial\mathbb{D}\smallsetminus\Lambda) $ has full $ \lambda $-measure (because radial extensions of inner functions are non-singular, Thm. \ref{thm-ergodic-properties}) and consists of points which are not singularities (since points in $ g^{-1}(\partial\mathbb{D}\smallsetminus\Lambda) $ are mapped locally conformally to points in $ \partial\mathbb{D}\smallsetminus\Lambda $). This ends the proof of the statement.
\end{enumerate}
\end{proof}

\begin{obs}{\em (Baker domains)}\label{remark-dp-Baker-domains}
In this paper we do not consider Baker domains (i.e. invariant Fatou components in which iterates converge to an essential singularity), since they exhibit much wilder dynamics, even in the simply connected case. However, for a specific type of Baker domains, the so-called {\em doubly parabolic Baker domains}, most of the properties proved in Theorem \ref{thmC-general-version} still hold. Indeed, doubly parabolic Baker domains are characterized as those for which the associated inner function $ g $ is doubly parabolic (c.f. \cite{BFJK_AbsorbingSets}). Thus, $g^*|_{\partial\mathbb{D}}$ is ergodic, so \ref{thmC-a} and \ref{thmC-d} hold. In \cite{BFJK_AbsorbingSets} it is proved that such Baker domains admit always a simply connected absorbing domain, so \ref{thmC-b} also holds. Concerning \ref{thmC-c}, $g^*|_{\partial\mathbb{D}}$  may not be recurrent (in \cite{BFJK-Escaping} examples are given in the simply connected case).
\end{obs}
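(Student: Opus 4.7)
The plan is to verify the four claims collected in this remark. The starting point is the characterization from \cite{BFJK_AbsorbingSets} that an invariant Fatou component $U$ is a doubly parabolic Baker domain if and only if its associated inner function $g\colon\mathbb{D}\to\mathbb{D}$ is doubly parabolic in the sense of Cowen. Combined with Theorem \ref{thm-ergodic-properties}\ref{ergodic-properties-a}, this immediately yields that the radial extension $g^*$ is exact, and hence ergodic, with respect to Lebesgue measure $\lambda$.

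For part \ref{thmC-a}, the key step is to exhibit a singular value of $f|_U$ inside $U$, so that Lemma \ref{lemma-sing-limit-sets}\ref{lemma-singularities-b} forces $g$ to have infinite degree. If $SV(f|_U)=\emptyset$, that lemma would yield $g$ a M\"obius automorphism, and $f|_U$ would act as a local hyperbolic isometry of $U$; this is incompatible with the iterates converging to an essential singularity in $\widehat{\partial}U$ while $U$ is multiply connected, and a careful analysis of the deck group conjugation by $g$ rules this scenario out. Once $g$ has infinite degree, the inclusion $\Lambda\subset E(g)$ follows from Lemma \ref{lemma-sing-limit-sets}\ref{lemma-singularities-c}, and infinite connectivity of $U$ makes $\Lambda$, and therefore $E(g)$, uncountable. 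For part \ref{thmC-b}, the argument from Theorem \ref{thmB}\ref{thmB-itemc} transfers essentially verbatim: its only global ingredient is the existence of a simply connected absorbing domain inside $U$, which is precisely what is established for doubly parabolic Baker domains in \cite{BFJK_AbsorbingSets}. A bounded-type point $\xi$ corresponds to the non-tangential limit set of a finitely generated subgroup of deck transformations; after finitely many iterations, the generating loops are pushed into the absorbing domain and become contractible, the subgroup trivializes, and $(g^*)^n(\xi)$ must eventually fail to be defined.

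For part \ref{thmC-d}, assuming $SV(f|_U)\Subset U$, I would reproduce the Cantor-or-full-measure dichotomy for $\Lambda$ from the proof of Theorem \ref{thmC-general-version}\ref{thmC-d} with essentially no change. The argument depends only on ergodicity of $g^*$ together with the measure-theoretic invariance of $\partial\mathbb{D}\smallsetminus\Lambda$ coming from Proposition \ref{prop-invariance-limit-sets}; as in the parabolic basin case, since the Denjoy-Wolff point lies on $\partial\mathbb{D}$ one works with the $\sigma$-finite $g^*$-invariant measure absolutely continuous to $\lambda$ supplied by \cite[Thm. C]{DM91}. The transfer of the dichotomy from $\Lambda$ to $E(g)$ via the observation $SV(g)\cap\partial\mathbb{D}\subset\Lambda$ then goes through unchanged.

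The main obstacle, and the reason why \ref{thmC-c} only partially survives, is that doubly parabolic inner functions are exact but need not be recurrent, precisely because the $g^*$-invariant measure is only $\sigma$-finite; concrete simply connected counterexamples appear in \cite{BFJK-Escaping}. Exactness of $f|_{\partial U}$ can still be deduced from exactness of $g^*$ via the same transfer used in Theorem \ref{thmC-general-version}\ref{thmC-c}, but the $\omega_U$-almost-everywhere density of orbits on $\partial U$ requires recurrence and therefore may genuinely fail in this setting.
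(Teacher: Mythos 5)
Your proposal follows the same route as the paper's remark: use the characterization of doubly parabolic Baker domains via the Cowen type of $g$, deduce exactness/ergodicity of $g^*$ from Theorem \ref{thm-ergodic-properties}, import the simply connected absorbing domain of \cite{BFJK_AbsorbingSets} to rerun the proof of Theorem \ref{thmB}\ref{thmB-itemc} for item \ref{thmC-b}, rerun the proof of Theorem \ref{thmC-general-version}\ref{thmC-d} with the $\sigma$-finite invariant measure of \cite[Thm. C]{DM91} for item \ref{thmC-d}, and isolate recurrence (hence density of orbits) as the part of \ref{thmC-c} that may fail, with the counterexamples of \cite{BFJK-Escaping}. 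Items \ref{thmC-b}, \ref{thmC-c} and \ref{thmC-d} are handled exactly as the paper intends, and your additional observation that exactness of $f|_{\partial U}$ still transfers is correct.

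The one genuine soft spot is your treatment of item \ref{thmC-a}. To rule out $SV(f|_U)=\emptyset$ you assert that a M\"obius $g$ is ``incompatible with the iterates converging to an essential singularity while $U$ is multiply connected'', to be settled by ``a careful analysis of the deck group conjugation''. As stated this is not a proof, and the claim is dubious without invoking the doubly parabolic hypothesis: univalent Baker domains exist, and for a doubly connected univalent Baker domain the conjugation relation $g\gamma g^{-1}=\gamma^{\pm1}$ with the hyperbolic generator $\gamma$ forces $g$ to be a \emph{hyperbolic} M\"obius transformation, which is perfectly compatible with convergence to a boundary essential singularity — it is only excluded here because such a $g$ is not doubly parabolic. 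The repair is immediate and already contained in your own first paragraph: a M\"obius transformation with Denjoy--Wolff point on $\partial\mathbb{D}$ is hyperbolic or simply parabolic, never doubly parabolic (consecutive orbit points remain at constant positive hyperbolic distance); equivalently, its boundary action is not ergodic, contradicting the ergodicity of $g^*$ you have already established. Hence Lemma \ref{lemma-sing-limit-sets}\ref{lemma-singularities-b} puts you in the infinite-degree case directly, and \ref{lemma-singularities-c} gives $\Lambda\subset E(g)$. With that substitution your argument for \ref{thmC-a} closes, and the rest of the proposal matches the paper.
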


\section{Multiply connected wandering domains. Theorem \ref{thmD}}\label{section-WD}
We start by recalling some notation, as well as the statement of Theorem \ref{thmD}. In this section, we assume that $f$ is in class $\mathbb{K}$, and $U_0$ is a multiply connected wandering domain of $f$ (notice that its iterates $U_n$ might not be multiply connected). We take universal coverings $\pi_n\colon\DD\to U_n$, $n\geq 0$, and lifts $g_n$ of $f|_{U_n}$, obtaining the following commutative diagram.
\[ \begin{tikzcd}
\DD \arrow{r}{g_0} \arrow{d}{\pi_0}&  \DD \arrow{r}{g_1} \arrow{d}{\pi_1}&  \DD \arrow{r}{g_2} \arrow{d}{\pi_2}&  \DD \arrow{r}{g_3} \arrow{d}{\pi_3}&\dots \\
U_0 \arrow{r}{f}& U_1\arrow{r}{f}& U_2\arrow{r}{f}& U_3\arrow{r}{f}& \dots
\end{tikzcd} \]
Furthermore, the inner functions $g_n\colon\mathbb{D}\to\mathbb{D}$ can be normalised to fix the origin. With this setup, Theorem \ref{thmD} states that
\begin{enumerate}[label={(\alph*)}]
	\item if $U_0$ is infinitely connected, its limit set $ \Lambda_0 $  is either a Cantor set of zero measure, a Cantor set of positive measure, or $ \partial\mathbb{D} $. 
	
	\noindent For each of these possibilities, there exists a transcendental meromorphic function with a multiply connected wandering domain  with a limit set of the corresponding type.
\end{enumerate}
Under the additional assumption that $U_n$ is bounded for all $ n\geq 0 $, then
		\begin{enumerate}[label={(\alph*)}]   \setcounter{enumi}{1}
		\item the limit set $ \Lambda_n$ of $ U_n $ satisfies that $ \lambda (\Lambda_n)= \lambda (\Lambda_0)$, for all $ n\geq 0 $.
		\item If $ \sum_n 1-\left| g'_n(0)\right| =\infty $, then $f|_{\partial U_n}$ is exact and $ \Lambda_n=\partial\mathbb{D} $, for all $ n\geq 0 $.
	\end{enumerate}
Finally, Theorem \ref{thmD} states that there exists a transcendental meromorphic function with a wandering domain whose exact components are singletons. With all this in mind, let us start the proof.

\begin{proof}[Proof of Theorem \ref{thmD}]
The first claim of (a) is simply a restating of a well-known fact about limit sets of deck transformation groups; see e.g. \cite[p. 76]{Hub06}. The remarkable fact is that all cases can be realized, although examples of wandering domains with Cantor set limit sets with zero measure are already known -- see e.g. the finitely connected examples in \cite{BKL90}. 

For the cases with positive measure, we need only apply the approximation results of \cite{MartiPeteRempeWatermann-mero}, which say that every regular plane domain can be realized as a wandering domain of some transcendental meromorphic function. Therefore, the problem reduces to finding regular domains to which approximation theory can be applied. In the case of domains whose limit set is a Cantor set of positive measure, we have {\em champagne subregions}: subdomains of $\DD$ that can be rigged to have positive harmonic measure at $\partial\DD$ (see \cite{OCS04}), giving us fat Cantor sets as limit sets. More precisely, a champagne subregion can be defined as
\[ \Omega := \DD\setminus \bigcup_{n\geq 1} D(w_n, r_n); \]
necessary and sufficient conditions on the points $w_n$ and the radii $r_n$ are given in \cite[Thm. 2.1]{OCS04} for the boundary component $\partial\DD\subset\partial\Omega$ to have positive harmonic measure in $\Omega$. Furthermore, the domain $\Omega$ is clearly regular, i.e. satisfies $\mathrm{int}(\overline{\Omega}) = \Omega$. For limit sets equal to $\partial\DD$, we take as a regular domain an infinitely connected  basin of attraction of a rational map (recall Theorem \ref{thmB}). This finishes the proof of (a).

Item (b) is an easy consequence of Corollary \ref{cor-invariance-limit-sets-proper}.

For item (c), we use the results of \cite{Pom81} (see also \cite[Lemma 7.5]{BEFRS2}). More specifically, the hypothesis that $\sum_{n} (1 - |g_n'(0)|) = \infty$ implies that the boundary dynamics of the sequence $g_n^*$, or equivalently of the compositions $G_n^* = g_n^*\circ\cdots g_0^*$, is exact. That is, for every measurable set $ X\subset \partial \DD $ such that for all $ n$, $X=(G^*_n)^{-1}(X_n)$ for some measurable $X_n\subset \DD$, then $\lambda(X)=0$ or $\lambda(X)=1$.  Since $\Lambda_0 = (G_n^*)^{-1}(\Lambda_n)$ for all $n$ (up to a set of measure zero), we must have either $\lambda(\Lambda_0) = 0$ or $\lambda(\Lambda_0) = 1$. Exactness of $f|_{\partial U}$ also follows from the exactness of the sequence $ \left\lbrace G_n^*\right\rbrace _n $. The fact that $\Lambda_n = \partial \DD$ can now be proved exactly as for Theorem \ref{thmB}{\em \ref{thmB-itema}}.

To show that there exists a transcendental meromorphic function whose exact components are singletons just take any regular bounded domain $ U $ and realize it as a wandering domain of a transcendental meromorphic function, using \cite{MartiPeteRempeWatermann-mero}. It follows from their construction that $ f^n(\overline{U}) $ is univalent for all $n\geq 0$, and this already implies that the exact components are singletons. 
\end{proof}

Lastly, it is well-known that multiply connected wandering domains of entire functions exhibit more restrictive dynamics. In this particular case, we prove the following. Regarding item \ref{thmEc}, we point out that it may fail for infinitely connected wandering domains in a very peculiar way: the outer boundary component of $U_n$ may coincide with the inner boundary component of $U_{n+1}$, ``merging'' the two exact components into one. This phenomenon was first observed by Bishop \cite{Bis18} when constructing transcendental entire functions with Julia sets of Hausdorff dimension one, and was later shown by Baumgartner \cite{Bau15} to occur for other transcendental entire functions as well.
\begin{thm}{\bf (Multiply connected wandering domains of entire functions)}\label{thmE}
	Let $ f $ be an entire function, and let $ U $ be a multiply connected wandering domain. Let $ \pi_U\colon\mathbb{D}\to U $, $ \pi_{f(U)}\colon\mathbb{D}\to f(U) $  be a universal covering maps, and let $ g\colon\mathbb{D}\to\mathbb{D} $ such that $ \pi_{f(U)}\circ g=f\circ \pi_U $. Then,
		\begin{enumerate}[label={\normalfont (\alph*)}]
		\item\label{thmEb} $ g^*(\xi) $ exists for all $ \xi\in\partial \mathbb{D} $;
		\item\label{thmEc} if $U$ is doubly connected, then $ f|_{\partial U} $ has two exact components.
	\end{enumerate}
\end{thm}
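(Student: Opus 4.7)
Both parts rest on the classical structure theorem for multiply connected wandering domains of entire functions (Baker, Bergweiler, Rippon, Stallard): such a domain $U$ is bounded in $\mathbb{C}$, hence $\overline{U}$ is compactly contained in $\Omega(f)=\mathbb{C}$, and $f|_U\colon U\to f(U)$ is a proper holomorphic map of finite degree. With this in hand, part \ref{thmEb} follows largely from Lemma \ref{lemma-lifts-and-limit-sets-proper}: for every $\xi\in\partial\mathbb{D}$ of escaping or bungee type for $\pi_U$, $g^*(\xi)$ exists.

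The only remaining case for \ref{thmEb} is the bounded type, and this is where the main obstacle lies. If $\xi$ is of bounded type, then $\pi_U(R_\xi)$ is compactly contained in $U$, so $\pi_{f(U)}(g(R_\xi))=f(\pi_U(R_\xi))$ is compactly contained in $f(U)$. I would combine this with the fact that $g(R_\xi)$ is a connected curve starting at $g(0)\in\mathbb{D}$, and use the locally finite nature of $\pi_{f(U)}^{-1}$ on compact subsets of $f(U)$ to show that $g(R_\xi)$ remains in a compact subset of $\mathbb{D}$. Once this is established, continuity of $f$ on the compact set $\overline{U}$, together with a normal-family argument for $g$ on a neighbourhood of $R_\xi$, forces $g(R_\xi)$ to converge to a single point of $\mathbb{D}$, so $g^*(\xi)\in\overline{\mathbb{D}}$ is well-defined. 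I expect the delicate step to be ruling out that $g(R_\xi)$ moves through infinitely many different sheets of $\pi_{f(U)}^{-1}$, which amounts to controlling the interplay between the deck groups of $\pi_U$ and $\pi_{f(U)}$.

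For part \ref{thmEc}, we exploit the much more rigid structure of doubly connected wandering domains of entire functions. By \cite{KS08} (see also Sect. \ref{subsect-doubly-connWD}), all $U_n=f^n(U)$ remain doubly connected, $f\colon U_n\to U_{n+1}$ is an unbranched proper covering of some degree $d_n\geq 2$, and each $U_n$ is conformally equivalent to a round annulus $A_n$. Through these conformal uniformizations, $f|_{U_n}$ becomes a power map $z\mapsto z^{d_n}$ between round annuli. Each boundary circle of $A_n$ is a Jordan curve, and the conformal uniformization $\varphi_n\colon A_n\to U_n$ extends to a measure-theoretic boundary map sending Lebesgue measure on each boundary circle of $A_n$ to the restriction of the harmonic measure $\omega_{U_n}$ on the corresponding boundary component of $\partial U_n$. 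Taking $\partial U=\Sigma_1\sqcup\Sigma_2$ to be the two boundary components, each $f^n|_{\Sigma_k}$ is conjugated (via the $\varphi_n$'s) to the composition $z\mapsto z^{d_0\cdots d_{n-1}}$ on the unit circle, a classical Bernoulli-exact system, and non-autonomous exactness of $f|_{\Sigma_k}$ follows by pulling the exactness criterion back through this conjugacy. The potential obstacle here is verifying that $\varphi_n$ really does transfer null sets between $\partial\mathbb{D}$ and $\Sigma_k$ correctly; but this is a standard consequence of the conformal invariance of harmonic measure, together with the fact that each boundary component of a round annulus is regular for the Dirichlet problem.
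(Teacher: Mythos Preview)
Your treatment of part \ref{thmEc} and of the escaping/bungee cases in part \ref{thmEb} is correct and aligns with the paper's argument (Lemma \ref{lemma-lifts-and-limit-sets-proper} handles the latter once properness is established).

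The bounded-type case in \ref{thmEb}, however, contains a genuine gap. Your step ``use the locally finite nature of $\pi_{f(U)}^{-1}$ on compact subsets of $f(U)$ to show that $g(R_\xi)$ remains in a compact subset of $\mathbb{D}$'' does not work, and in fact its conclusion is false. Local finiteness of a covering only says that the preimage of a compact set $K\subset f(U)$ is a disjoint union of compact sheets; when $f(U)$ is multiply connected there are infinitely many such sheets and they accumulate precisely on the limit set $\Lambda_{f(U)}\subset\partial\mathbb{D}$. The very existence of bounded-type points (Definition \ref{def:escaping-bounded-bungee}) shows that a curve in $\mathbb{D}$ can run off to $\partial\mathbb{D}$ while its image under the covering map stays compact. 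So knowing $\pi_{f(U)}(g(R_\xi))\Subset f(U)$ gives no compactness for $g(R_\xi)$, and no normal-families argument will rescue this. Indeed the correct conclusion is the opposite of what you anticipate: $g^*(\xi)$ lies on $\partial\mathbb{D}$, not in $\mathbb{D}$, and is itself a point of bounded type for $\pi_{f(U)}$.

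The missing idea, which is where the hypothesis ``$f$ entire'' is actually used, is that non-contractible loops in $U$ map under $f$ to non-contractible loops in $f(U)$ (by the Maximum Modulus Principle together with invariance of $\mathcal{J}(f)$: any loop in $U$ that surrounds a Julia point has image surrounding a Julia point). The paper combines this with the characterization from \cite[Summary 4.15]{FerreiraJove} that a bounded-type point $\xi$ lies in the non-tangential limit set of a finitely generated subgroup of the deck group, corresponding to loops $\sigma_1,\dots,\sigma_m$. Since the images $f(\sigma_1),\dots,f(\sigma_m)$ remain non-contractible, $g(R_\xi)$ lands at a point in the non-tangential limit set of the subgroup they generate in the deck group of $\pi_{f(U)}$. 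This is precisely the control over ``moving through infinitely many sheets'' that you correctly identified as the delicate step but did not supply.
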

\begin{proof}
Multiply connected domains of entire functions are always bounded \cite{Baker84}. Therefore, $ f\colon U\to f(U) $ is a proper map, and all the specific machinery developed in Section \ref{subsect-mapping-properties} can be applied.
Moreover, if we denote by $\left\lbrace U_n\right\rbrace _n$ the orbit of the wandering domain $U$, i.e. $ U_{n+1}=f(U_n) $, then either $U_n$ is doubly connected for all $n\geq n_0$, or $U_n$ is infinitely connected for all $n\geq0$. In the first case, $ f|_{U_n} $ is conjugate to a power map between annuli. We refer to \cite{BergweilerRipponStallard_MCWD} for details.
	

For item {\em \ref{thmEb}}, we recall Lemma \ref{lemma-mapping-properties}; we only have to show that $g^*(\xi)$ is defined even for points of bounded type.  Let $\xi\in\dD$ be of bounded type and let $ R_\xi= \left\lbrace t\xi \colon t\in \left[ 0,1\right)  \right\rbrace $. 
 Since $ \xi$ is of bounded type, $ \xi  $ belongs to the non-tangential limit set of some finitely generated subgroup of the group of  deck transformations $ \Gamma $ \cite[Summary 4.15]{FerreiraJove}, whose generators correspond to the closed loops $ \sigma_1, \dots, \sigma_m $.  Then $\gamma = \pi_U(R_\xi)$ is a word on the finitely many generators $\sigma_1, \ldots, \sigma_m$ of the fundamental group $\pi(U)$.

Since $f$ is entire, the image of a non-contractible curve in $U$ is again non-contractible in $f(U)$ (this follows easily from the Maximum Modulus Principle and the complete invariance of the Fatou and Julia sets). Thus, $f(\gamma)$ is a word on the finitely many generators $f(\sigma_1), \ldots, f(\sigma_m)$ of the fundamental group $\pi(f(U))$. Lifting $f(\gamma)$ yields the curve $g(R_\xi)$, which lands non-tangentially at a point in the non-tangential limit set of the finitely generated subgroup of the deck transformation group corresponding to $f(\sigma_1), \ldots, f(\sigma_m)$. This point, by definition, is $g^*(\xi)$.

Lastly, for item {\em \ref{thmEc}}, if $U$ is doubly connected, then so is each $U_n$, and $f|_{U_n}$ reduces to a power map (see e.g. \cite{KS08,BergweilerRipponStallard_MCWD,Ferreira_MCWD1}); exactness of each boundary component follows easily.
\end{proof}

\begin{bibdiv}
\begin{biblist}

\bib{Aaronson97}{book}{
      author={Aaronson, J.},
       title={An introduction to infinite ergodic theory},
      series={Mathematical Surveys and Monographs},
   publisher={American Mathematical Society, Providence, RI},
        date={1997},
      volume={50},
}

\bib{Ahl79}{book}{
      author={Ahlfors, L.~V.},
       title={Complex analysis},
     edition={3},
   publisher={McGraw-Hill},
        date={1979},
}

\bib{Astorg-thesis}{misc}{
      author={Astorg, M.},
       title={Bifurcations and wandering domains in holomorphic dynamics},
        date={2024},
        note={Habilitation à Diriger des Recherches. Available at
  \href{https://www.idpoisson.fr/astorg/papiers/hdrastorg.pdf}{\texttt{https://www.idpoisson.fr/astorg/papiers/hdrastorg.pdf}}},
}

\bib{Baker84}{article}{
      author={Baker, I.~N.},
       title={Wandering domains in the iteration of entire functions},
        date={1984},
     journal={Proc. London Math. Soc. (3)},
      volume={49},
      number={3},
       pages={563\ndash 576},
}

\bib{BakerWD3}{article}{
      author={Baker, I.~N.},
       title={Some entire functions with multiply-connected wandering domains},
        date={1985},
     journal={Ergodic Theory Dynam. Systems},
      volume={5},
      number={2},
       pages={163\ndash 169},
}

\bib{Baker-puncturedplane}{article}{
      author={Baker, I.~N.},
       title={Wandering domains for maps of the punctured plane},
        date={1987},
     journal={Ann. Acad. Sci. Fenn. Ser. A I Math.},
      volume={12},
      number={2},
       pages={191\ndash 198},
}

\bib{Bak87}{article}{
      author={Baker, I.~N.},
       title={Wandering domains for maps of the punctured plane},
        date={1987},
     journal={Ann. Acad. Sci. Fenn. Ser. A},
      volume={12},
       pages={191\ndash 198},
}

\bib{Bargmann}{article}{
      author={Bargmann, D.},
       title={Iteration of inner functions and boundaries of components of the
  {F}atou set},
        date={2008},
     journal={Transcendental dynamics and complex analysis},
      volume={348},
       pages={1\ndash 36},
}

\bib{Bau15}{thesis}{
      author={Baumgartner, M.},
       title={{\"Uber R\"ander von mehrfach zusammenh\"angenden Gebieten}},
        type={Ph.D. Thesis},
        date={2015},
}

\bib{BakerDominguez}{article}{
      author={Baker, I.~N.},
      author={Dom{\'i}nguez, P.},
       title={Boundaries of unbounded {F}atou components of entire functions},
        date={1999},
     journal={Ann. Acad. Sci. Fenn. Math.},
      volume={24},
      number={2},
       pages={437\ndash 464},
}

\bib{BakerDominguezHerring}{article}{
      author={Baker, I.~N.},
      author={Dom\'{\i}nguez, P.},
      author={Herring, M.~E.},
       title={Dynamics of functions meromorphic outside a small set},
        date={2001},
     journal={Ergod. Th. Dyn. Sys.},
      volume={21},
      number={3},
       pages={647\ndash 672},
}

\bib{befrs1}{article}{
      author={Benini, A.~M.},
      author={Evdoridou, V.},
      author={Fagella, N.},
      author={Rippon, P.~J.},
      author={Stallard, G.~M.},
       title={Classifying simply connected wandering domains},
        date={2022},
     journal={Math. Ann.},
      volume={383},
      number={3-4},
       pages={1127\ndash 1178},
}

\bib{BEFRS2}{article}{
      author={Benini, A.~M.},
      author={Evdoridou, V.},
      author={Fagella, N.},
      author={Rippon, P.~J.},
      author={Stallard, G.~M.},
       title={Boundary dynamics for holomorphic sequences, non-autonomous
  dynamical systems and wandering domains},
        date={2024},
     journal={Adv. Math.},
      volume={446},
       pages={Paper No. 109673, 51},
}

\bib{Bergweiler93}{article}{
      author={Bergweiler, W.},
       title={Iteration of meromorphic functions},
        date={1993},
     journal={Bull. Amer. Math. Soc. (N.S.)},
      volume={29},
      number={2},
       pages={151\ndash 188},
}

\bib{BF01}{article}{
      author={Bara{\'n}ski, K.},
      author={Fagella, N.},
       title={Univalent {B}aker domains},
        date={2001},
     journal={Nonlinearity},
      volume={14},
      number={3},
       pages={411\ndash 429},
}

\bib{BFJK-Connectivity}{article}{
      author={Bara{\'n}ski, K.},
      author={Fagella, N.},
      author={Jarque, X.},
      author={Karpi{\'n}ska, B.},
       title={On the connectivity of the {J}ulia sets of meromorphic
  functions},
        date={2014},
     journal={Invent. Math.},
      volume={198},
      number={3},
       pages={591\ndash 636},
}

\bib{BFJK_AbsorbingSets}{article}{
      author={Bara{\'n}ski, K.},
      author={Fagella, N.},
      author={Jarque, X.},
      author={Karpi{\'n}ska, B.},
       title={Absorbing sets and {B}aker domains for holomorphic maps},
        date={2015},
     journal={J. Lond. Math. Soc. (2)},
      volume={92},
      number={1},
       pages={144\ndash 162},
}

\bib{BFJK-Accesses}{article}{
      author={Bara\'{n}ski, K.},
      author={Fagella, N.},
      author={Jarque, X.},
      author={Karpi\'{n}ska, B.},
       title={Accesses to infinity from {F}atou components},
        date={2017},
     journal={Trans. Amer. Math. Soc.},
      volume={369},
      number={3},
       pages={1835\ndash 1867},
}

\bib{BFJK-Escaping}{article}{
      author={Bara{\'n}ski, K.},
      author={Fagella, N.},
      author={Jarque, X.},
      author={Karpi{\'n}ska, B.},
       title={Escaping points in the boundaries of {B}aker domains},
        date={2019},
     journal={J. Anal. Math.},
      volume={137},
      number={2},
       pages={679\ndash 706},
}

\bib{Bis18}{article}{
      author={Bishop, C.~J.},
       title={A transcendental {J}ulia set of dimension one},
        date={2018},
     journal={Invent. Math.},
      volume={212},
       pages={407\ndash 460},
}

\bib{BKL90}{article}{
      author={Baker, I.~N.},
      author={Kotus, J.},
      author={L{\"u}, Y.},
       title={Iterates of meromorphic functions {II}: examples of wandering
  domains},
        date={1990},
     journal={J. Lond. Math. Soc},
      volume={42},
       pages={267\ndash 278},
}

\bib{Bolsch-Fatoucomponents}{article}{
      author={Bolsch, A.},
       title={Periodic {F}atou components of meromorphic functions},
        date={1999},
     journal={Bull. London Math. Soc.},
      volume={31},
      number={5},
       pages={543\ndash 555},
}

\bib{Bonfert}{article}{
      author={Bonfert, P.},
       title={On iteration in planar domains},
        date={1997},
     journal={Michigan Math. J.},
      volume={44},
      number={1},
       pages={47\ndash 68},
}

\bib{BergweilerRipponStallard_MCWD}{article}{
      author={Bergweiler, W.},
      author={Rippon, P.~J.},
      author={Stallard, G.~M.},
       title={Multiply connected wandering domains of entire functions},
        date={2013},
     journal={Proc. Lond. Math. Soc. (3)},
      volume={107},
      number={6},
       pages={1261\ndash 1301},
}

\bib{BakerWeinreich}{article}{
      author={Baker, I.~N.},
      author={Weinreich, J.},
       title={Boundaries which arise in the dynamics of entire functions},
        date={1991},
     journal={Rev. Roumaine Math. Pures Appl.},
      volume={36},
      number={7-8},
       pages={413\ndash 420},
        note={Analyse complexe (Bucharest, 1989)},
}

\bib{Conway2}{book}{
      author={Conway, J.~B.},
       title={Functions of one complex variable. {II}},
      series={Graduate Texts in Mathematics},
   publisher={Springer-Verlag, New York},
        date={1995},
      volume={159},
}

\bib{cowen}{article}{
      author={Cowen, C.~C.},
       title={Iteration and the solution of functional equations for functions
  analytic in the unit disk},
        date={1981},
     journal={Trans. Amer. Math. Soc.},
      volume={265},
      number={1},
       pages={69\ndash 95},
}

\bib{DevaneyGoldberg}{article}{
      author={Devaney, R.~L.},
      author={Goldberg, L.~R.},
       title={Uniformization of attracting basins for exponential maps},
        date={1987},
     journal={Duke Math. J.},
      volume={55},
      number={2},
       pages={253\ndash 266},
}

\bib{DM91}{article}{
      author={Doering, C.},
      author={Mañé, R.},
       title={The dynamics of inner functions},
        date={1991},
     journal={Ensaios Matemáticos (SBM)},
      volume={3},
       pages={1\ndash 79},
}

\bib{EvdoridouMartipeteRempe}{misc}{
      author={Evdoridou, V.},
      author={{Martí-Pete}, D.},
      author={Rempe, L.},
       title={Wandering dynamics of transcendental functions},
        note={In preparation},
}

\bib{Epstein-thesis}{thesis}{
      author={Epstein., A.~L.},
       title={Towers of finite type complex analytic maps},
        type={Ph.D. Thesis},
        date={1993},
}

\bib{EvdoridouVasiliki2020FA}{article}{
      author={Evdoridou, V.},
      author={Rempe, L.},
      author={Sixsmith, D.~J.},
       title={Fatou's associates},
        date={2020},
        ISSN={2199-6792},
     journal={Arnold mathematical journal},
      volume={6},
      number={3-4},
       pages={459\ndash 493},
}

\bib{Ferreira_MCWD1}{article}{
      author={Ferreira, G.~R.},
       title={Multiply connected wandering domains of meromorphic functions:
  internal dynamics and connectivity},
        date={2022},
     journal={J. Lond. Math. Soc. (2)},
      volume={106},
      number={3},
       pages={1897\ndash 1919},
}

\bib{Ferreira_MCWD2}{article}{
      author={Ferreira, G.~R.},
       title={Multiply connected wandering domains of meromorphic functions:
  the pursuit of uniform internal dynamics},
        date={2024},
     journal={Ergod. Th. Dyn. Sys.},
      volume={44},
      number={3},
       pages={727\ndash 748},
}

\bib{FagellaHenriksen06}{article}{
      author={Fagella, N.},
      author={Henriksen, Ch.},
       title={Deformation of entire functions with {B}aker domains},
        date={2006},
     journal={Discrete Contin. Dyn. Syst.},
      volume={15},
      number={2},
       pages={379\ndash 394},
}

\bib{FerreiraJove}{article}{
      author={Ferreira, G.~R.},
      author={Jové, A.},
       title={Boundary behaviour of universal covering maps},
        date={2025},
     journal={Adv. Math.},
      volume={469},
       pages={110232},
}

\bib{FR24}{misc}{
      author={Ferreira, G.~R.},
      author={Rempe, L.},
       title={Classifying multiply connected wandering domains},
        date={2024},
        note={available on
  \href{https://arxiv.org/abs/2405.12165}{\texttt{arXiv:2405.12165}}},
}

\bib{Ferreira-VanStrien}{article}{
      author={Ferreira, G.~R.},
      author={van Strien, S.},
       title={Holomorphic motions, natural families of entire maps, and
  multiplier-like objects for wandering domains},
        date={2025},
     journal={Math. Ann.},
      volume={392},
       pages={701\ndash 732},
}

\bib{garnett}{book}{
      author={Garnett, J.~B.},
       title={Bounded analytic functions},
      series={Graduate Texts in Mathematics},
   publisher={Springer, New York},
        date={2007},
      volume={236},
}

\bib{Hinkkanen_rat}{article}{
      author={Hinkkanen, A.},
       title={Julia sets of rational functions are uniformly perfect},
        date={1993},
     journal={Math. Proc. Cambridge Philos. Soc.},
      volume={113},
      number={3},
       pages={543\ndash 559},
}

\bib{Hinkkanen_polynomials}{article}{
      author={Hinkkanen, A.},
       title={Julia sets of polynomials are uniformly perfect},
        date={1994},
     journal={Bull. London Math. Soc.},
      volume={26},
      number={2},
       pages={153\ndash 159},
}

\bib{Hub06}{book}{
      author={Hubbard, J.~H.},
       title={Teichm{\"u}ller theory with applications to topology, geometry,
  and dynamics},
   publisher={Matrix Editions},
        date={2006},
}

\bib{JF23}{article}{
      author={Jov\'e, A.},
      author={Fagella, N.},
       title={Boundary dynamics in unbounded {F}atou components},
        date={2025},
        ISSN={0002-9947,1088-6850},
     journal={Trans. Amer. Math. Soc.},
      volume={378},
      number={4},
       pages={2321\ndash 2362},
         url={https://doi.org/10.1090/tran/9287},
      review={\MR{4880450}},
}

\bib{Jov24-boundaries}{misc}{
      author={Jové, A.},
       title={Boundaries of hyperbolic and simply parabolic baker domains},
        date={2024},
        note={available at
  \href{https://arxiv.org/abs/2410.19726}{\texttt{arXiv:2410.19726}}},
}

\bib{Jov24}{article}{
      author={Jové, A.},
       title={{Periodic boundary points for simply connected Fatou components
  of transcendental maps}},
        date={2025},
     journal={Math. Ann.},
      volume={392},
       pages={2577\ndash 2620},
}

\bib{Keen}{incollection}{
      author={Keen, L.},
       title={Dynamics of holomorphic self-maps of {${\bf C}^*$}},
        date={1988},
   booktitle={Holomorphic functions and moduli, {V}ol.\ {I} ({B}erkeley, {CA},
  1986)},
      series={Math. Sci. Res. Inst. Publ.},
      volume={10},
   publisher={Springer, New York},
       pages={9\ndash 30},
}

\bib{KS08}{incollection}{
      author={Kisaka, M.},
      author={Shishikura, M.},
       title={On multiply connected wandering domains of entire functions},
        date={2008},
   booktitle={Transcendental dynamics and complex analysis},
      editor={Rippon, P.~J.},
      editor={Stallard, G.~M.},
   publisher={Cambridge University Press},
}

\bib{ManeRocha}{article}{
      author={Mañé, R.},
      author={da~Rocha, L.~F.},
       title={Julia sets are uniformly perfect},
        date={1992},
     journal={Proc. Amer. Math. Soc.},
      volume={116},
      number={1},
       pages={251\ndash 257},
}

\bib{Milnor}{book}{
      author={Milnor, J.},
       title={Dynamics in one complex variable},
   publisher={Friedr. Vieweg \& Sohn, Braunschweig},
        date={1999},
        note={Introductory lectures},
}

\bib{MartiPeteRempeWatermann-mero}{article}{
      author={{Martí-Pete}, D.},
      author={Rempe, L.},
      author={Waterman, J.},
       title={Bounded {F}atou and {J}ulia components of meromorphic functions},
        date={2024-06},
        ISSN={1432-1807},
     journal={Math. Ann.},
      volume={391},
      number={1},
       pages={95\ndash 111},
         url={http://dx.doi.org/10.1007/s00208-023-02725-4},
}

\bib{MartipeteRempeWaterman}{article}{
      author={{Martí-Pete}, D.},
      author={Rempe, L.},
      author={Waterman, J.},
       title={Eremenko's conjecture, wandering {L}akes of {W}ada, and maverick
  points},
        date={2025},
     journal={J. Amer. Math. Soc.},
      volume={38},
       pages={877\ndash 918},
}

\bib{Nev70}{book}{
      author={Nevanlinna, R.},
       title={Analytic functions},
   publisher={Springer},
        date={1970},
}

\bib{OCS04}{article}{
      author={{Ortega-Cerd\`a}, J.},
      author={Seip, K.},
       title={Harmonic measure and uniform densities},
        date={2004},
     journal={Indiana Univ. Math. J.},
      volume={53},
       pages={905\ndash 923},
}

\bib{Pommerenke_UnifPerfect}{article}{
      author={Pommerenke, Ch.},
       title={Uniformly perfect sets and the {P}oincar{\'e} metric},
        date={1979},
        ISSN={0003-889X,1420-8938},
     journal={Arch. Math. (Basel)},
      volume={32},
      number={2},
       pages={192\ndash 199},
         url={https://doi.org/10.1007/BF01238490},
      review={\MR{534933}},
}

\bib{Pom81}{article}{
      author={Pommerenke, Ch.},
       title={On ergodic properties of inner functions},
        date={1981},
     journal={Math. Ann.},
      volume={256},
       pages={43\ndash 50},
}

\bib{Pom92}{book}{
      author={Pommerenke, Ch.},
       title={Boundary behaviour of conformal maps},
      series={Grundlehren der mathematischen Wissenschaften},
   publisher={Springer},
        date={1992},
      volume={299},
}

\bib{RipponStallard-MCWDMero}{article}{
      author={Rippon, P.~J.},
      author={Stallard, G.~M.},
       title={On multiply connected wandering domains of meromorphic
  functions},
        date={2008},
     journal={J. Lond. Math. Soc. (2)},
      volume={77},
      number={2},
       pages={405\ndash 423},
}

\bib{RipponStallard}{article}{
      author={Rippon, P.~J.},
      author={Stallard, G.~M.},
       title={Boundaries of univalent {B}aker domains},
        date={2018},
     journal={J. Anal. Math.},
      volume={134},
      number={2},
       pages={801\ndash 810},
}

\bib{RipponStallard-Eremenko}{article}{
      author={Rippon, P.~J.},
      author={Stallard, G.~M.},
       title={Eremenko points and the structure of the escaping set},
        date={2019},
     journal={Trans. Amer. Math. Soc.},
      volume={372},
      number={5},
       pages={3083\ndash 3111},
}

\bib{Stallard}{article}{
      author={Stallard, G.~M.},
       title={The {H}ausdorff dimension of {J}ulia sets of meromorphic
  functions},
        date={1994},
     journal={J. London Math. Soc. (2)},
      volume={49},
      number={2},
       pages={281\ndash 295},
}

\bib{Sullivan}{article}{
      author={Sullivan, D.},
       title={Quasiconformal homeomorphisms and dynamics. {I}. {S}olution of
  the {F}atou-{J}ulia problem on wandering domains},
        date={1985},
     journal={Ann. of Math. (2)},
      volume={122},
      number={3},
       pages={401\ndash 418},
}

\end{biblist}
\end{bibdiv}

\end{document}